\def\dist{\textup{dist}}
\newcommand{\norm}[1]{\left\lVert#1\right\rVert}
\newcommand{\Rm}{\mathbb{R}^m}
\newcommand{\Rn}{\mathbb{R}^n}
\newcommand{\Rmn}{\mathbb{R}^{m \times n}}
\newcommand{\R}{\mathbb{R}}
\newcommand{\cN}{\mathcal{N}}
\newcommand{\cR}{\mathcal{R}}
\newtheorem{obs}{Remark}
\newtheorem{defi}{Definition}
\newtheorem{teo}{Theorem}
\newtheorem{lema}{Lemma}
\newtheorem{prop}{Proposition}
\newtheorem{hip}{Assumption}
\begin{document}
	
\title{Convergence analysis of Levenberg-Marquardt method with Singular Scaling for nonzero residual nonlinear least-squares problems\footnote{This work is supported by FAPESC (Fundação de Amparo à Pesquisa e Inovação do Estado de Santa Catarina) [grant number 2023TR000360]}}

%\subtitle{}
\titlerunning{Convergence analysis of LMMSS for nonzero residual NLS problems}

\author{Rafaela Filippozzi \and Everton Boos \and  Douglas S. Gon\c{c}alves \and Ferm\'{i}n S. V. Baz\'{a}n }

\institute{Rafaela Filippozzi \and Everton Boos  \and  Douglas S. Gon\c{c}alves (Corresponding author) \and Ferm\'{i}n S. V. Baz\'{a}n   \at
              Department of Mathematics, Federal University of Santa Catarina, \\
              Florian\'{o}polis, 88040900, SC, Brazil\\
			  rafaela.filippozzi@posgrad.ufsc.br \and  everton.boos@ufsc.br \and  douglas.goncalves@ufsc.br \and  fermin.bazan@ufsc.br  
}

\date{Received: date / Accepted: date}
%The correct dates will be entered by the editor.

\maketitle

	\begin{abstract}
	Recently, a Levenberg-Marquardt method with Singular Scaling matrix, called LMMSS, was proposed and successfully 
	applied in parameter estimation  in heat conduction problems, 
	where the choice of suitable singular scaling matrix resulted in better quality approximate solutions than those of the classical Levenberg-Marquardt. 
%In \cite{ref1}, a local convergence analysis was presented for the zero residual case. 
%	Regularized seminorm Gauss-Newton methods, also called Le\-ven\-berg--Marquardt (LM) methods with singular scaling, have been previously studied in the literature and have proven to be effective in the solution of nonlinear equations with nonisolated zeros. By a suitable choice of the scaling matrix such methods deliver solutions which are physically more sound than the classic LM method. 
	In this paper, we study convergence properties of LMMSS when applied to \emph{nonzero} residual nonlinear least-squares problems. 
	We show that the local convergence of the iterates depends both on the control of the gradient linearization error and on a suitable choice of the regularization parameter. 
	%In specific settings, superlinear convergence can still be achieved but, in general, even linear convergence can be ensured only when a combined measure of nonlinearity and residual size is small enough.
	Incidentally, we show that the rate of convergence  is dictated by
a measure of nonlinearity and residual size, so that if such a measure goes to
zero quickly enough, the convergence can be superlinear, otherwise, in general,
we show that not even linear convergence can be expected if such a measure is
not small enough. 
%	 and  address its convergence analysis to stationary points of the least-squares function. Local convergence is established under completeness and local error bound assumptions. 
%	We show  that the choice of the regularization parameter is dictated by the linearization error of the gradient, which is influenced by the nonlinearity and the size of the residual. In particular, if such an error goes to zero fast enough, we prove that the distance of an iterate to the set of stationary points goes to zero superlinearly. 
Additionally, we propose a globalized version of the method and prove that any limit point of the generated sequence is stationary for the least-squares function. 
%Some examples illustrate how the method behaves in nonlinear problems with nonzero residual.
Some examples are provided to illustrate our theoretical results.
	\end{abstract}
	
\keywords{Levenberg-Marquardt\and Singular Scaling Matrix \and Convergence analysis \and nonzero residual}
\subclass{49M37 \and 65K05 \and 90C30}

	\section{Introduction}

	In this study we investigate convergence properties of the Levenberg--Marquardt method with singular scaling (LMMSS) from \cite{Boos2024} when applied to the nonlinear least-squares (NLS) problem:
%	We consider the following nonlinear least-squares (NLS) problem:
	\begin{equation}\label{prob1}
		\min_{x\in \mathbb{R}^n} \quad \phi(x) := \frac{1}{2}\|F(x)\|^2,
	\end{equation}
	where $F: \mathbb{R}^n \rightarrow \mathbb{R}^m$ is twice continuously differentiable and $\| \cdot \|$ denotes the Euclidean norm. 
	In particular, we  are interested on the \emph{overdetermined} case, where $m \geq n$.
	
	Unlike previous local convergence analyses \cite{Dennis_Schnabel,FanYuan,Gratton2007,Yamashita}, including \cite{Boos2024}, 
	we assume neither zero residual at a solution of \eqref{prob1} nor full rank of the Jacobian at such a point. 	
	%Regularization strategies for nonzero-residual ill-posed problems arising from specific applications, 
	In applied contexts, such as data fitting, parameter estimation, experimental design, and imaging problems  \cite{bellavia2018elliptical,barz2015nonlinear,cornelio2011regularized,deidda2014regularized,henn2003levenberg,landi2017limited}, 
	to name a few, admitting a nonzero residual is essential for achieving meaningful solutions. 
	
%	In this theoretical study, aiming at solving \eqref{prob1}, we shall investigate the following iteration:
The LMMSS iteration is defined as follows \cite{Boos2024}:
	\begin{align}\label{s1}
		(J_k^TJ_k + \lambda_k L^TL) d_k &= -J_k^TF_k\\
		x_{k+1} &= x_k + \alpha_k d_k,  \label{s2}
	\end{align}
	where $F_k := F(x_k) \in \Rm$, $J_k := J(x_k) \in \Rmn$ is the Jacobian of $F$ at $x_k$, $\lambda_k > 0$, $\alpha_k$ is the step size and $L^TL$, referred to as  \emph{scaling matrix},  is allowed to be singular. We refer to the iteration \eqref{s1}--\eqref{s2} as the Levenberg-Marquardt method with Singular Scaling (LMMSS). When $L^T L = I$ we retrieve the \emph{classic} Levenberg-Marquardt method (LMM). 
	
%		##### revisao LM classico, full rank, error bound e por fim residuo nao

In general, problem \eqref{prob1} is a nonconvex optimization problem % for which the global minimum is not known, opposed to the case of zero residual. 
%Thus, 
and we will limit our attention to stationary points of $\phi$. The set of stationary points will be denoted by 
\[
X^* = \{x \in \mathbb{R}^n \mid \nabla \phi(x) = J(x)^TF(x) = 0\},
\]
and assume that $X^* \neq \emptyset$. We are particularly interested in the case of nonisolated stationary points, with a possible change (decrease) in the rank of the Jacobian as the generated sequence of iterates $\{x_k\}$ approaches the set $X^*$.

Given a starting point sufficiently close to $x^* \in X^*$, we are interested in the convergence analysis of the sequence generated by the local LMMSS iteration (with $\alpha_k=1$). Our main contribution consists in establishing local convergence results for LMMSS based on an error bound condition upon $\nabla \phi$ \emph{without} requiring zero residual neither full rank of the Jacobian at stationary points.

	Previous research predominantly focused on the zero residual case  (i.e $\exists\text{ } x \in \Rn$ such that $F(x)=0$) or in cases where the Jacobian of $F$ has full rank  at a solution  \cite{Dennis1977,Dennis_Schnabel,Levenberg1944,Marquardt1963,Yamashita,Boos2024}. 
	In \cite{Dennis_Schnabel,Dennis1977}, local convergence of LMM for the NLS problem was established assuming full rank of the Jacobian at the solution, and the nonzero residual case was handled by imposing a condition on the residual size and nonlinearity. Under such condition, and assuming the sequence of regularization parameters $\lambda_k$ is bounded away from zero, it was proved that the iterates converge linearly to the solution. In the zero residual case, by choosing the regularization parameter proportional to the norm of the gradient, such convergence was proved to be quadratic. 
	
	The seminal work \cite{Yamashita} showed the local convergence of LMM for systems of nonlinear equations under an error bound condition upon the norm of the residual, without relying on the assumption of the full rank of the Jacobian. 
	Assuming $\lambda_k = \| F(x_k) \|^2$, the authors established that the distance of the iterates to the solution set converges quadratically to zero. 
	Later, \cite{FanYuan} improved such result and showed that quadratic convergence is still attained for $\lambda_k = \| F(x_k) \|^{1+r}, r \in [0,1]$. 
	Furthermore, they also showed that the sequence of iterates itself converges quadratically to some point in the solution set. 

	More recently, a few papers have focused on establishing local convergence of LMM for \emph{nonzero} residual NLS problems (i.e $\forall x \in \Rn$,  $F(x) \ne 0$). 
	In \cite{behling2019local}, local convergence was established using an error bound on the gradient of $\phi$ and considering a possible change in the rank of Jacobian around a stationary point. In \cite{Bergou2020}, local convergence of LMM in the nonzero residual case was also established, but using a different error bound condition based on the distance between the residual vector at $x$ and the one at $\bar{x}$, the stationary point closest to $x$.

%	Here we consider NLS problems where the set of zeros of $F$ is empty  (which we call a \emph{nonzero residual} NLS problem) and its Jacobian is allowed to be rank-deficient. 
%
%	In such a case, our interest is to identify the stationary points of $\phi$, i.e., the elements of the set $X^* = \{x \in \mathbb{R}^n \mid \nabla \phi(x) = J(x)^TF(x) = 0\}$, assuming that $X^* \neq \emptyset$. Throughout the text we shall use $x^*$ to denote an arbitrary element of $X^*$. 	

%	Regularization strategies for nonzero-residual ill-posed problems arising from specific applications, such as data fitting, parameter estimation, experimental design, and imaging, have been considered in \cite{bellavia2018elliptical,barz2015nonlinear,cornelio2011regularized,deidda2014regularized,henn2003levenberg,landi2017limited}, where admitting a nonzero residual is essential for achieving meaningful solutions. 

%	Convergence analysis of the classic LMM in the nonzero residual scenario has received recent attention in the literature \cite{behling2019local,Bergou2020}.

%	##### revisao LM com semi-norma L, motivacao, aplicacoes 
	Nevertheless, all the works mentioned above considered the classic LMM where $L^T L = I$. 
	Perhaps \cite{Boos2024} was the first work to consider local convergence analysis of LMMSS (for $L^T L \ne I$) under an error bound condition, but considering the zero residual case. The motivation to use singular matrices in the form $L^TL$ in LMMSS iteration \eqref{s1}--\eqref{s2} comes from the general form of Tikhonov regularization \cite{Tikhonov95} for linear systems $Ax = b$, i.e.,
\begin{equation*}
x_\lambda = (A^TA + \lambda L^TL)^{-1} A^Tb.
\end{equation*}
%In this case, the literature indicates that for discrete ill-posed problems with smooth solution, the choice of $L$ as a discrete version of derivative operators can lead to significant improvements in the generated approximations \cite{Hansen1998,Hansen2010}. 
The use of a singular scaling matrices allows us to use seminorm regularizers which can promote specific properties (inherent of the expected solution) in the approximate solution generated by method. Like the $\ell_1$-norm is used to promote sparsity, if the aimed solution is expected to be smooth, then the use of $L$ as a discrete version of derivative operator allows us to induce such smoothness in the approximate solution. This approach proved to be successful in ill-posed linear inverse problems \cite{Hansen1998,Hansen2010,Buccini2017}. 

Nonlinear inverse problems can also benefit from the more flexible choice of $L$ in LMMSS. 
For instance, in \cite{Boos2024} LMMSS was successfully used to estimate the 2D perfusion coefficient in a bioheat model. 
%and the LMMSS solution compared with the one obtained by the classic LMM. 
Figure~\ref{fig:exemplosingularscaling} shows the surface plots of the 2D perfusion coefficient reconstructions: the picture on the left plots the expected solution, the one on the middle shows the reconstruction obtained by the classic LMM and the one on the right corresponds to the reconstruction obtained by LMMSS with a specific choice of $L=\mathcal{L}_3$, where $\mathcal{L}_3$ is based on a discrete version of the third order derivative operator. 
Both methods stopped after dozens of iterations based on the discrepancy principle as stopping criterion \cite{Morozov1984}. 
For this problem, it is noteworthy that although the temperature reconstruction error was the same for both methods (almost the same value of $\phi$), 
LMMSS provided an approximate solution closer to the expected one.
For more details, see \cite[Section 5.1]{Boos2024}). 

\begin{figure}[h]
\centering
\includegraphics[scale=0.37]{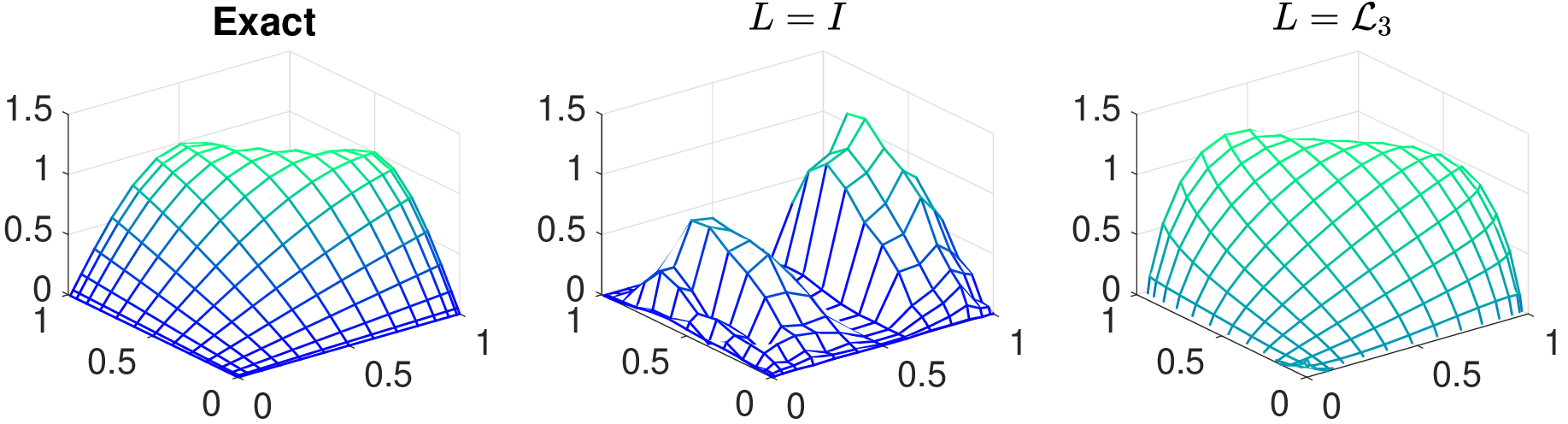}
\label{fig:exemplosingularscaling}
\caption{The picture on the left plots the expected solution, the one on the middle shows the reconstruction obtained by the classic LMM and the one on the right corresponds to the reconstruction obtained by LMMSS with a specific choice of $L$.  Image extracted from \cite[Figure~3]{Boos2024}.}
\end{figure}

The above example illustrates the potential of LMMSS when a suitable $L$ is chosen. 
Observe that  $L$ defines the seminorm regularizer and must be chosen according to the problem at hand, based on known properties of the expected solution. 
As we already mentioned, if the solution (of the original problem, whose discretization gave rise to the nonlinear least-squares problem) is expected to be smooth, then the use of $L$ as a discrete version of derivative operators may be a good choice. But other examples are available in the literature, for instance, see \cite{Buccini2017,pes2020,Pes2022} and references therein.

% #### qual a nossa contribuicao ?????

As far as we know, convergence analysis of LMMSS in the \emph{nonzero residual} scenario has not yet been addressed in the literature. 
This is precisely the purpose of this work: for the LMMSS iteration defined in \cite{Boos2024} we aim at establishing local convergence rates to stationary points of the least-squares function in the case of \emph{nonzero} residual under an error bound condition upon $\nabla \phi$, following the lines of the analysis in \cite{behling2019local}. Of course, by considering a possibly singular $L^T L$ in iteration \eqref{s1}--\eqref{s2} makes the analysis more challenging, for the iteration itself to be well defined depends on a condition on the null spaces of $L$ and the Jacobian of $F$. 

Additionally, we propose a globalization for the LMMSS iteration where the step-size $\alpha_k$ is determined by a line-search satisfying an Armijo condition 
and the search direction is safeguarded by the classic LMM direction. For the resulting algorithm, we prove that any limit point of the generated sequence is stationary.

The paper is organized as follows. 
In Section~\ref{sec:assump}, we gather some necessary assumptions to show that LMMSS iteration is well-defined and for the subsequent analysis. 
In Section~\ref{sec:auxresu}, we present some technical results and discuss the local convergence analysis in Section~\ref{sec:local}. 
As we shall see, the convergence rate is dictated by a measure of nonlinearity and residual size. 
If such a measure goes to zero fast enough, the convergence can be superlinear but, in general, 
we cannot expect even linear convergence if such measure is not small enough. 
%We show that LMMSS iteration is well-defined and provide the local convergence results which are categorized into two cases based on the behavior of the rank of the Jacobian around a stationary point (constant and diminishing rank). 
%Section~\ref{sec:global} is devoted to the global convergence analysis, where we prove that, regardless the starting point, for LMMSS with an Armijo line-search scheme, every limit point of the generated sequence is a stationary point of the sum-of-squares function. 
In Section~\ref{sec:global}, a globalized LMMSS algorithm is proposed and its global convergence established.
To close this theoretical study, some examples are given in Section~\ref{sec:examples} to illustrate the local and global behavior of the algorithm in nonzero residual nonlinear least-squares problems. Final considerations are drawn in Section~\ref{sec:conclusion}. 

\section{Assumptions}\label{sec:assump}

Recall that throughout this manuscript, we assume that $X^* \ne \emptyset$. 
We shall denote a generic element of $X^*$ by $x^*$. 
Given $x$, we denote by $\bar{x}$ an element of $X^*$ such that $ \|x - \bar{x}\| = \text{dist}(x, X^*) $. 
Also, given $x^* \in X^*$, we may use the shorthand notation $F_* = F(x^*)$ and $J_* = J(x^*)$. 
The nullspace of $A$ is denoted by $\cN(A)$ and $\cR(A)$ denotes its range. 

%Assumption~\ref{Hip_nullJLnew} guarantees that $d_k$ solution of \eqref{s1} exists and is unique.

%\begin{hip}\label{Hip_nullJLnew}
%The matrix $L\in \R^{p\times n}$ has $\text{rank}(L) = p \leq n \leq m$ and, given $\Omega \subset \Rn$, there exist $\gamma >0$ such that, for every $x \in \Omega$
%\begin{equation}\label{comp-condition}
%	\norm{J(x) v}^2 + \norm{L v} ^2 \geq \gamma \norm{v}^2, \quad \forall v \in \R^n.
%\end{equation}
%\end{hip}

\begin{defi}\label{def:ucomp}
We say that the \emph{uniform completeness condition} for problem \eqref{prob1} with respect to $L$ holds at $\Omega \subset \Rn$, 
if  $L\in \R^{p\times n}$ has $\text{rank}(L) = p \leq n \leq m$ and there exist a constant $\gamma >0$ such that, for every $x \in \Omega$
\begin{equation}\label{comp-condition}
	\norm{J(x) v}^2 + \norm{L v} ^2 \geq \gamma \norm{v}^2, \quad \forall v \in \R^n.
\end{equation}
\end{defi}

It is worth noting that condition \eqref{comp-condition} is equivalent to 
\begin{equation}\label{kernelJL}
\mathcal{N} (J(x)) \cap \mathcal{N} (L) = \{ {\bf 0} \}.
\end{equation}
Condition~\eqref{kernelJL} is often referred to as a \textit{completeness condition} and comes from the literature of linear inverse problems \cite{Morozov1984,engl}, where $J(x) = A, \forall x \in \Rn$. Observe that it clearly holds when $J(x)$ is of full rank, regardless of the choice of $L$, as it occurs in some applications involving physical problems \cite[Section~5.1]{Boos2024}. 
%In fact, in real-life applications involving ill-posed problems, $L$ is often used to enforce smoothness in the calculated solution, and, even if both $L$ and $J(x)$ are rank deficient, it is very unlikely that vectors in $\mathcal{N} (L)$ belong to $\mathcal{N} (J(x))$~\cite[Chapter~8]{Hansen2010}. 
%That is, in practical applications, the assumption is quite reasonable.

In the context of LMMSS iteration, if the iterates $x_k$ lie in a set $\Omega \subset \Rn$ where the uniform completeness condition holds, 
then $d_k$ solution of \eqref{s1} exists and is unique because \eqref{comp-condition} implies $J_k^T J_k + \lambda_k L^T L$ is positive definite for $\lambda_k > 0$.

Next assumption is standard in the Gauss--Newton and Levenberg--Marquardt literature and asks the Jacobian to be locally Lipschitz. 

\begin{hip}\label{Hip_Jdiffnew}
For some $x^* \in X^*$, there exists a constant $\delta \in ]0,1[$ and $L_0 > 0$ such that 
\begin{equation*}
	\|J(x)-J(y)\| \leq L_0 \|x-y\|,
\end{equation*}
for all $x, y \in B(x^*, \delta)$. 
\end{hip}

Assumption~\ref{Hip_Jdiffnew} implies that:
\begin{equation*}
\|J(y)(x-y) - (F(x)-F(y))\| \leq L_1 \|x-y\|^2,  \quad \forall x, y \in B(x^*, \delta)
\end{equation*}
where $L_1 = L_0/2$, that is, the error in the linear approximation of $F(x)$ around $y$ is $O(\| x - y\|^2)$, for $x$ and $y$ in a neighborhood of $x^*$. 

Due to the compactness of the ball $B(x^*, \delta)$, there exist positive constants $L_2$ and $\beta$ such that $\|J(x)\| \leq L_2$ and $\|F(x)\| \leq \beta$ for all $x \in B(x^*, \delta)$. Therefore, since $\|J(x)\|$ is bounded in $B(x^*, \delta)$, by applying the mean value inequality, we can infer that $\|F(x) - F(y)\| \leq L_2\|x - y\|$, for all $x, y \in B(x^*, \delta)$. Additionally, the gradient $\nabla \phi(x) = J(x)^TF(x)$ is Lipschitz in $B(x^*, \delta)$:
\begin{equation}\label{eqhipL3new}
\norm{J(x)^TF(x) - J(y)^TF(y)} \leq L_3\|x - y\|, \quad \forall x, y \in B(x^*, \delta),
\end{equation}
where $L_3=L_2^2+\beta L_0.$

Moreover, notice that for $z \in X^* \cap B(x^*, r)$ and $x, y \in B(x^*, r)$, we have

\begin{equation}\label{eq:auxhiplips}
\begin{aligned}
	\|(J(x) - J(y))^TF(y)\| &= \|(J(x) - J(z) + J(z) - J(y))^TF(y)\|\\
	&\leq \|(J(x) - J(z))^TF(y)\| + \|(J(z) - J(y))^TF(y)\|\\
	&\leq L_0L_2\|x - z\|\|y - z\| + \|J(x)^TF(z)\| \\&+ L_0L_2\|y - z\|^2 + \|J(y)^TF(z)\|.       
\end{aligned}
\end{equation}

\begin{lema}\cite[Lemma 2.1]{behling2019local}\label{lem:L4}
If Assumption \ref{Hip_Jdiffnew} is satisfied, then there exists $\delta \in ]0,1[$ and a constant $L_4>0$ such that
\begin{equation}\label{eq:L4}
	\norm{ \nabla \phi(y) - \nabla \phi(x) - J(x)^T J(x)(y - x) } \leq L_4 \norm{ x - y }^2 + \norm{ (J(x) - J(y))^T  F(y)},
\end{equation}
for all $x, y \in B(x^*, \delta)$.
\end{lema}

Next, we present a local error bound condition assumed in this work. 

\begin{hip}\label{Hip_error_boundnew}
For some $x^* \in X^*$, $\norm{J(x)^TF(x)}$ provides a local error bound at $x^*$, i.e.,  
there exists $\delta \in ]0,1[$ and $\omega >0 $ such that 
\begin{equation*}%\label{error_boundnew}
	\omega\dist (x, X^*) \leq \norm{J(x)^TF(x)}, \quad \forall x \in B(x^*, \delta),
\end{equation*}
where $\dist(x, X^*)= \inf_{z\in X^*}{\norm{x-z}}$.
\end{hip}
Error bound conditions have been extensively studied and used in convergence analysis of LM methods in the last two decades \cite{Yamashita,fan,FanYuan,Fischer2002,behling2012,behling2013,behling2019local,Fischer2024}. 
For zero residual problems, it is common to use $\| F(x) \|$ as an error bound. 
However, for nonzero residual problems, $F(x) \ne 0, \forall x \in \Rn$, and Assumption~\ref{Hip_error_boundnew} uses $\| \nabla \phi(x) \|$ as an error bound instead. 
Section~\ref{sec:examples} presents some examples where such assumption holds. See also the discussion and examples in \cite{behling2019local}. 
It is well-known that Assumption~\ref{Hip_error_boundnew} is suitable to deal with nonisolated stationary points and it is weaker than the assumption of the Jacobian having full rank at $x^* \in X^*$. By the way, $J(x^*)$ having full rank implies Assumption~\ref{Hip_error_boundnew}.
%\begin{obs}
%Since we deal with the nonzero residual case, as in \cite{behling2019local}, the error bound condition is on $\nabla \phi$ rather than $F$. 
%Although the earliest works on this topic can be traced back to the 1950s, error bound conditions have been frequently used in the literature in the last decades  \cite{Yamashita,behling2019local,Bergou2020,Boos2024} for being weaker than other traditional regularity assumptions, such as requiring full-rank of the Jacobian at the solution.
%\end{obs}

From Assumption~\ref{Hip_error_boundnew} and \eqref{eqhipL3new}, we obtain
\begin{equation}\label{eq:associationerrorwithL3}
\omega\text{dist}(x, X^*) \leq \norm{ J(x)^T F(x) } \leq L_3 \text{dist}(x, X^*).    
\end{equation} 

The remaining assumptions focus, as outlined in \eqref{eq:auxhiplips}, on the terms 
$$ 
\norm{ J(x)^T F(z)} \text{ and } \norm{ J(y)^T F(z) }.
$$ 
These terms play a crucial role in controlling the error, as expressed in \eqref{eq:L4}, of the ``incomplete linearization'' of the gradient: 
notice that $J(x)^T J(x)$ is used in \eqref{eq:L4} instead of the Hessian $\nabla^2 \phi(x) = J(x)^T J(x) + S(x)$, with $S(x) = \sum_{i=1}^m F_i(x) \nabla^2 F_i(x)$, that would appear in a first order Taylor approximation of $\nabla \phi$.
%Next assumptions are about the behavior of such errors in a neighborhood of $x^* \in X^*$.

\begin{hip}\label{a3}
For some $x^* \in X^*$, for all $x \in B(x^*, \delta)$ and all $z \in X^* \cap B(x^*, \delta)$, the following inequality holds:
\begin{equation}\label{eq:a3}
	\|J(x)^TF(z)\| \leq \sigma \|x - z\|, 
\end{equation}
with $0 \leq \sigma < \bar\sigma$, where $\bar\sigma$ is a positive constant depending on $\omega$, $L_3$ and the smallest positive eigenvalue of $J(x^*)^T J(x^*)$.
\end{hip}

Explicit expressions for $\bar\sigma$ will be given ahead. 
Assumption~\ref{a3} dates back, at least, to the work of Dennis \cite{Dennis1977}, 
and it was also considered in  \cite{behling2019local} where the authors analyze the local convergence of the standard LM (where $L^T L=I$) in the case of nonzero residual. 
It is a condition to control the linearization error of the gradient. 
In \cite{Dennis1977}, convergence analysis of the classic LMM is given for the nonzero residual case, under the assumption that $J(x^*)^TJ(x^*)$ is nonsingular and $\|J(x)^TF(x^*)\| \leq \sigma \|x - x^*\|$, in a neighborhood of $x^* \in X^*$, for a sufficiently small $\sigma>0$, namely $\sigma < \lambda^* := \lambda_{\min}(J(x^*)^T J(x^*))$. 
Here, because we are not requiring $J(x^*)$ to have full column rank, the set $X^* \cap B(x^*, \delta)$ may be a nonisolated set of stationary points and for this reason Assumption~\ref{a3} is slightly different from that of \cite{Dennis1977}. 

\begin{obs}\label{rem:a3smallres}
Observe that 
\[
\|(J(x)-J(z))^TF(z)\| \leq \| J(x) - J(z) \| \|F(z) \| \leq L_0 \| F(z) \| \| x -z \|.
\]
Thus,  given a bound $\|F(z)\| \leq \beta$ for every $z \in X^* \cap B(x^*, \delta)$, if $L_0 \beta < \bar\sigma$, then \eqref{eq:a3} holds with $\sigma = L_0 \beta$. 
This might occur when the residual is small enough in $X^* \cap B(x^*, \delta)$. 
\end{obs}

\begin{hip}\label{hipotese5paraconvl}
For some $x^* \in X^*$, for all $x \in B(x^*, \delta)$ and all $z \in X^* \cap B(x^*, \delta)$, the following inequality holds:
\begin{equation*}%\label{eq:hip5}
	\|J(x)^TF(z)\| \leq C\|x - z\|^{1+r}, 
\end{equation*}
with $r \in ]0,1]$ and $C\geq 0$.    
\end{hip}

Assumption~\ref{hipotese5paraconvl} is admittedly stronger than Assumption~\ref{a3}, but it might hold in some specific problems. 
%Assumption~\ref{hipotese5paraconvl} was also employed in \cite{behling2019local} for analyzing the local convergence of the classic LMM in the nonzero residual case. 
For example, it is clear that such assumption holds when $F$ is linear or $F(z) = 0$. 
In \cite{behling2019local} the authors showed that when Assumption~\ref{hipotese5paraconvl} holds, LMM local convergence rate can be superlinear under a suitable choice of the LM parameter.    
%Other examples will be given in Section~\ref{sec:examples}. 
In \cite{Dennis_Schnabel}, it is mentioned that $J(x)^T F(z) \approx S(z)(x-z)$ and thus $\sigma$ could be interpreted as a combined measure of nonlinearity and residual size. Observe that $S(z)=0$ is also a sufficient (but not necessary) condition to ensure Assumption~\ref{hipotese5paraconvl}. 
See the examples in Section~\ref{sec:examples} and in the reference \cite{behling2019local}. 

\section{Auxiliary Results}\label{sec:auxresu}

In this section, we organize other fundamental results for the upcoming analysis. 

%%We start by mentioning that \eqref{s1} always has a solution. In fact, \eqref{s1} corresponds to the normal equation $B^T B d = B^T c$ of the augmented system
%%\[
%%Bd := \left[ \begin{array}{c}
%%J_k \\
%%\sqrt{\lambda_k} L
%%\end{array} \right] d = - \left[ \begin{array}{c}
%%F_k \\
%%0
%%\end{array} \right] =: c,
%%\]
%%and the normal equation always has a solution because $B^T c$ lies in the range of $B^T B$. 
%%But it is condition \eqref{comp-condition} that ensures the solution of \eqref{s1} is unique. 
%%Indeed, it is easy to see that condition \eqref{comp-condition} at $x_k$ implies that $J_k^T J_k + \lambda_k L^T L$ is positive definite.

We start with a perturbation lemma that will be important when the rank of $J(x)^TJ(x)$ is constant in a neighborhood of $x^*$.

\begin{lema}\label{lem:banach} \cite[Corollary 3.1]{behling2019local}
Given $\kappa > 1$, if $\text{rank}(J(x)^T J(x)) = \text{rank}(J_*^T J_*) = q \geq 1$, and 
\begin{equation}\label{eq:kappa}
	\norm{J(x)^TJ(x)-J_*^T J_*} \leq \Big( 1 - \frac{1}{\kappa} \Big) \frac{1}{\norm{(J_*^TJ_*)^+}}, 
\end{equation}
where $(J_*^TJ_*)^+$ denote the Moore-Penrose pseudo-inverse, then $$\norm{(J(x)^TJ(x))^+} \leq \kappa \norm{(J_*^TJ_*)^+}.$$
\end{lema}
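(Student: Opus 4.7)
The plan is to exploit the symmetric positive semi-definite structure of both matrices together with Weyl's eigenvalue perturbation inequality. Set $A := J_*^T J_*$ and $B := J(x)^T J(x)$; both are symmetric PSD with common rank $q$ by hypothesis. For any symmetric PSD matrix $M$ of rank $q$, the spectral decomposition gives $\norm{M^+} = 1/\mu_q(M)$, where $\mu_1(M) \ge \cdots \ge \mu_q(M) > 0 = \mu_{q+1}(M) = \cdots = \mu_n(M)$ are its eigenvalues listed in decreasing order. In particular $\norm{A^+} = 1/\mu_q(A)$ and $\norm{B^+} = 1/\mu_q(B)$, so condition \eqref{eq:kappa} rewrites as $\norm{B - A} \le (1 - 1/\kappa)\mu_q(A)$ and the conclusion amounts to proving $\mu_q(B) \ge \mu_q(A)/\kappa$.

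Next I would apply Weyl's inequality to the symmetric matrices $A$ and $B$, which yields $|\mu_i(B) - \mu_i(A)| \le \norm{B - A}$ for all $i$. Taking $i = q$,
\[
\mu_q(B) \;\ge\; \mu_q(A) - \norm{B - A} \;\ge\; \mu_q(A) - \Bigl(1 - \frac{1}{\kappa}\Bigr)\mu_q(A) \;=\; \frac{\mu_q(A)}{\kappa}.
\]
Taking reciprocals and using the rank hypothesis to again identify $\norm{B^+} = 1/\mu_q(B)$, we obtain $\norm{B^+} \le \kappa/\mu_q(A) = \kappa \norm{A^+}$, which is exactly the claim.

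The only delicate ingredient is the rank equality $\text{rank}(B) = \text{rank}(A) = q$: it is what allows us to identify $\mu_q(B)$ with the smallest positive eigenvalue of $B$, hence with $1/\norm{B^+}$. Without this, Weyl's inequality alone is insufficient, since $\norm{B^+}$ could then be governed by an even smaller positive eigenvalue of $B$ and would not necessarily remain bounded by a constant multiple of $\norm{A^+}$. Beyond that caveat, the argument is essentially a one-line application of Weyl's inequality combined with the spectral formula for the pseudo-inverse norm, so no serious obstacle is anticipated.
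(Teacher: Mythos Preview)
Your argument is correct. Note, however, that the paper does not supply its own proof of this lemma: it is simply cited as \cite[Corollary~3.1]{behling2019local}, so there is no in-paper argument to compare against. Your route via Weyl's eigenvalue perturbation inequality, combined with the identification $\norm{M^+} = 1/\mu_q(M)$ for a symmetric PSD matrix of rank $q$, is a clean and standard way to obtain this kind of pseudo-inverse stability bound; the label \texttt{lem:banach} in the paper suggests the cited reference may instead frame it through a Banach-type perturbation lemma for generalized inverses, but the two approaches are essentially equivalent here. Your remark on the necessity of the equal-rank hypothesis is exactly the subtle point: without it, $\mu_q(B)$ need not be the smallest \emph{positive} eigenvalue of $B$, and the reciprocal step would fail.
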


Now, we will recall some results on the Generalized Singular Value Decomposition (GSVD), 
an important tool in theoretical analysis of the LMMSS direction.

\begin{teo}(GSVD) \cite[p. 22]{Hansen1998}\label{teoGSVD}
Consider the pair $(A, L)$, where $A \in \mathbb{R}^{m \times n}$, $L \in \mathbb{R}^{p \times n}$, $m \geq n \geq p$, $rank(L) = p$, and $\mathcal{N} (A) \cap \mathcal{N}(L) = \{ 0 \}$. 
Then, there exist matrices $U \in \mathbb{R}^{m \times n}$ and $V \in \mathbb{R}^{p \times p}$ with orthonormal columns 
and a nonsingular matrix $X \in \mathbb{R}^{n \times n}$ such that 
\begin{equation*}%\label{GSVD}
	A = U \left[ \begin{array}{cc}
		\Sigma & 0 \\ 
		0 & I_{n-p}
	\end{array}  \right] X^{-1}  \quad \text{and} \quad L = V \left[ \begin{array}{cc}
		M & 0
	\end{array}  \right] X^{-1},
\end{equation*}
with $\Sigma$ and $M$ being the following diagonal matrices:
\begin{equation*}
	\Sigma = \text{diag}(\sigma_{1}, \dots, \sigma_{p}) \in \mathbb{R}^{p \times p} \quad \text{and} \quad M =  \text{diag}(\mu_{1}, \dots, \mu_{p}) \in \mathbb{R}^{p \times p}.
\end{equation*}
Moreover, the elements of $\Sigma$ and $M$ are nonnegative, ordered as follows:
\begin{equation*}
	0 \leq \sigma_{1} \leq \dots \leq \sigma_{p} \leq 1 \quad \text{and} \quad 1 \geq \mu_{1} \geq \dots \geq \mu_{p} >0,
\end{equation*}
and normalized by the relation $\sigma_{i}^{ 2} + \mu_{i}^{ 2} = 1$, for $i = 1, \dots, p$. 
We call the generalized singular value of the pair $(A, L)$ the ratio
\begin{equation*}
	\gamma_{i} = \frac{\sigma_{i}}{\mu_{i}}, \quad i = 1, \dots, p.
\end{equation*}
\end{teo}

\begin{obs}\label{obs:ranks} 
Since $X^{-1}$ is nonsingular and $U$ has orthonormal columns we have 
\[
\text{rank}(A) = \text{rank}(U \left[ \begin{array}{cc}
	\Sigma & 0 \\ 
	0 & I_{n-p}
\end{array}  \right] X^{-1}) = \text{rank}(\left[ \begin{array}{cc}
	\Sigma & 0 \\ 
	0 & I_{n-p}
\end{array}  \right] ) \geq n - p.
\]
\end{obs}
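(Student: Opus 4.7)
The plan is to read off the rank from the GSVD form of $A$ provided by Theorem~\ref{teoGSVD}. The first step is to observe that both outer factors in
$$A = U\begin{bmatrix}\Sigma & 0 \\ 0 & I_{n-p}\end{bmatrix}X^{-1}$$
are rank-preserving. Right-multiplication by $X^{-1}$ preserves rank because $X^{-1}$ is nonsingular, hence induces a bijection on $\mathbb{R}^n$. For left-multiplication by $U$, I would use that $U \in \mathbb{R}^{m \times n}$ has orthonormal columns, so $U^T U = I_n$; this makes $U$ injective, which gives $\ker(UM) = \ker(M)$ for any $M \in \mathbb{R}^{n \times n}$, and hence $\text{rank}(UM) = \text{rank}(M)$ by rank--nullity. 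Chaining these two observations yields the middle equality in the statement.

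The second step is to bound the rank of the block-diagonal matrix $\begin{bmatrix}\Sigma & 0 \\ 0 & I_{n-p}\end{bmatrix}$ from below. Because its lower-right diagonal block is the identity $I_{n-p}$, the last $n-p$ columns of this block matrix are linearly independent, so its rank is at least $n-p$. (In fact it equals $n-p + \text{rank}(\Sigma)$, but the weaker bound is the only thing required by the statement.)

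I do not anticipate a genuine obstacle: the remark is essentially a bookkeeping consequence of the GSVD, and the only nontrivial ingredient beyond elementary rank-preservation properties is the orthonormality of the columns of $U$, which is explicit in the hypotheses of Theorem~\ref{teoGSVD}. The proof should therefore amount to little more than concatenating these two standard rank identities.
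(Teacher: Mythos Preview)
Your proposal is correct and takes essentially the same approach as the paper: the remark itself is already self-contained and simply appeals to the rank-preserving properties of the nonsingular factor $X^{-1}$ and the orthonormal-column factor $U$, together with the obvious lower bound coming from the $I_{n-p}$ block. Your write-up merely makes these standard facts explicit.
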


%Such remark is important in situations where the rank of $J(x)$ remains constant, as well as instances where it gradually decreases after a certain number of iterations. In any case, however, the bound on the rank of $J(x)$ remains valid. 

By considering the GSVD of the pair $(J_k, L)$ we can provide a useful characterization of the direction $d_k$ in LMMSS.
In fact, given the GSVD 

\begin{equation*}
J_k = U_k \left[ \begin{array}{cc}
	\Sigma_k & 0 \\
	0 &I_{n-p}
\end{array}  \right] X_k^{-1}  \quad \text{and} \quad L = V_k \left[ \begin{array}{cc}
	M_k & 0
\end{array}  \right] X_k^{-1},
\end{equation*}
where   
\begin{equation}\label{eq:sigmamuigsvd}
(\Sigma_k)_{ii} := \sigma_{i,k} \quad \text{and} \quad (M_k)_{ii} := \mu_{i,k}, \quad i = 1, \dots, p;
\end{equation}
it follows that   
\begin{equation}\label{JmuL}
J_k^TJ_k + \lambda_k L^TL = X_k^{-T} \left[ \begin{array}{cc}
	\Sigma_k^2 + \lambda_k M_k^2 & 0 \\ 
	0 & I_{n-p}
\end{array}  \right] X_k^{-1}.  %\quad \forall k \in \mathbb{N},
\end{equation}

Then, $d_k$ from \eqref{s1} can be expressed as: 
\begin{equation}\label{eq:dkGSVD}
d_k = -X_k \left[ \begin{array}{cc}
	\Gamma_k & 0 \\ 
	0 & I_{n-p}
\end{array}  \right] X_k^T {J_k}^T F_k, %\quad \forall k \in \mathbb{N},
\end{equation}
with $\Gamma_k:=(\Sigma_k^2 + \lambda_k M_k^2)^{-1}.$

The following result provides bounds for the matrix $X_k$.

\begin{lema}\cite[Lemma 2.2]{Boos2024}\label{lem:normXk}
Consider $\{x_k\}$ the sequence generated by LMMSS along with the GSVD of the pair $(J_k,L)$. 
If $\{x_k\} \subset \Omega$ where the uniform completeness condition holds, then 
for each $k \in \mathbb{N}$, we have
\begin{equation} \label{cotaXk}
	\|X_k\| \le \dfrac{1}{\sqrt{\gamma}}.
\end{equation}
\end{lema}

\begin{obs}
By the GSVD for the pair $(J_k,L)$, it follows that 
$$
J_k^TJ_k+ L^TL= X_k^{-T}X_k^{-1}.
$$ 
Therefore, 
\begin{equation}\label{eq:limnormXk-1}
	\norm{X_k^{-1}}^2\leq  \norm{J_k^TJ_k}+ \norm{L}^2 \leq L_2^2+ \norm{L}^2.
\end{equation}
\end{obs}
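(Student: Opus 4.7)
The statement has two parts: an identity $J_k^T J_k + L^T L = X_k^{-T} X_k^{-1}$, and a norm bound derived from it. I would handle them in that order.

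First, to establish the identity, I would plug the GSVD decomposition into each summand on the left-hand side. From the factorization $J_k = U_k \begin{bmatrix}\Sigma_k & 0 \\ 0 & I_{n-p}\end{bmatrix} X_k^{-1}$ and the orthonormality of the columns of $U_k$ (so that $U_k^T U_k = I_n$), I get
\begin{equation*}
J_k^T J_k = X_k^{-T} \begin{bmatrix}\Sigma_k^2 & 0 \\ 0 & I_{n-p}\end{bmatrix} X_k^{-1}.
\end{equation*}
Similarly, from $L = V_k \begin{bmatrix} M_k & 0 \end{bmatrix} X_k^{-1}$ and $V_k^T V_k = I_p$,
\begin{equation*}
L^T L = X_k^{-T} \begin{bmatrix} M_k^2 & 0 \\ 0 & 0 \end{bmatrix} X_k^{-1}.
\end{equation*}
Adding these and using the key GSVD normalization $\sigma_{i,k}^2 + \mu_{i,k}^2 = 1$ for $i=1,\dots,p$ (Theorem~\ref{teoGSVD}), the diagonal middle block collapses to $I_p$, giving $J_k^T J_k + L^T L = X_k^{-T} I_n X_k^{-1} = X_k^{-T} X_k^{-1}$.

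For the norm bound, the key observation is that $\|X_k^{-1}\|^2 = \|(X_k^{-1})^T X_k^{-1}\| = \|X_k^{-T} X_k^{-1}\|$ in the spectral norm, since for any matrix $B$ one has $\|B\|^2 = \|B^T B\|$. Combining this with the identity just proved and the triangle inequality on matrix norms,
\begin{equation*}
\|X_k^{-1}\|^2 = \|X_k^{-T} X_k^{-1}\| = \|J_k^T J_k + L^T L\| \leq \|J_k^T J_k\| + \|L^T L\|.
\end{equation*}
Finally, $\|L^T L\| = \|L\|^2$, and by the submultiplicativity of the spectral norm together with the uniform bound $\|J(x)\| \leq L_2$ on $B(x^*,\delta)$ (established right after Assumption~\ref{Hip_Jdiffnew}), we get $\|J_k^T J_k\| \leq \|J_k\|^2 \leq L_2^2$, yielding the claimed chain of inequalities.

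There is no real obstacle here: the argument is essentially a bookkeeping computation built on top of the GSVD normalization and the standard fact $\|B\|^2 = \|B^T B\|$. The only place one should be careful is to make sure that the identity $\Sigma_k^2 + M_k^2 = I_p$ really fills in the top-left block so that, together with the $I_{n-p}$ coming from $J_k^T J_k$ alone, one obtains the full $I_n$ that disappears between $X_k^{-T}$ and $X_k^{-1}$; the bottom-right block of $L^T L$ is zero precisely because $L$ has only $p$ nonzero columns in the GSVD form, which matches.
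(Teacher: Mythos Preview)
Your argument is correct and matches exactly what the paper has in mind: the remark is stated without further justification because the identity is an immediate consequence of the GSVD normalization $\Sigma_k^2 + M_k^2 = I_p$, and the norm bound then follows from $\|B\|^2 = \|B^T B\|$, the triangle inequality, and the bound $\|J_k\|\le L_2$. There is nothing to add.
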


Now, let us analyze an upper bound for $\norm{\Gamma_k}$. 
From the GSVD for the pair $(J_k, L)$, using the generalized singular values $\gamma_{ i,k} = \sigma_{ i,k}/\mu_{ i,k}$, 
we have:
\begin{equation*}
\sigma_{ i,k}^2 = \frac{\gamma_{ i,k}^2 }{\gamma_{ i,k}^2 + 1} \quad \text{and} \quad \mu_{ i,k}^2 = \frac{1}{ \gamma_{ i,k}^2 + 1}.
\end{equation*}
Thus, we can express $\Gamma_k$ from \eqref{eq:dkGSVD} as:
\begin{equation*}
(\Gamma_k)_{ii} = \frac{1}{ \sigma_{ i,k}^2 + \lambda_k \mu_{ i,k}^2} = \frac{ \gamma_{ i,k}^2+1}{\gamma_{ i,k}^2 + \lambda_k}, \quad i = 1, \dots, p.
\end{equation*}

Therefore, we can bound $\| \Gamma_k \|$ in terms of the generalized singular values $\gamma_{i,k}$ and the parameter $\lambda_k$. To this end, we can use the following lemma.

\begin{lema}\label{lemma_psi}
For the function 
\begin{equation*}
	\psi(\gamma,\lambda)=\dfrac{\gamma^2+1}{\gamma^2+\lambda}, \quad
	\gamma \geq 0, \ \lambda > 0,
\end{equation*}
the following properties hold
\begin{itemize}
	\item[(a)] For $\lambda \in ]0,1[$, the function $\psi (\gamma, \lambda)$ has a unique maximum value attained at 
	\begin{equation*}
		\gamma_{\max} = 0 \quad \text{ where } \quad \psi(\gamma_{\max},\lambda) = \max_{\gamma \geq 0} \psi(\gamma,\lambda)= \frac{1}{\lambda}.
	\end{equation*}
	\item[(b)] For a fixed $\lambda \in [1,+\infty)$, the function $\psi (\gamma, \lambda)$ is nondecreasing and upper-bounded. More precisely,
	\begin{equation*}
		\psi (\gamma, \lambda) \leq 1, \quad \forall \gamma \geq 0.
	\end{equation*}
\end{itemize}
\end{lema}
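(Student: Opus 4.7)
The plan is to prove both parts by analyzing the partial derivative of $\psi$ with respect to $\gamma$, which is an elementary calculus exercise. The key observation is that
\[
\frac{\partial \psi}{\partial \gamma}(\gamma,\lambda) \;=\; \frac{2\gamma(\gamma^2+\lambda) - 2\gamma(\gamma^2+1)}{(\gamma^2+\lambda)^2} \;=\; \frac{2\gamma(\lambda-1)}{(\gamma^2+\lambda)^2},
\]
so the sign of the derivative in $\gamma$ is governed entirely by $\lambda-1$ (for $\gamma>0$). This single formula effectively splits the analysis into the two cases claimed in the lemma.

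For part (a), with $\lambda\in(0,1)$, the numerator factor $\lambda-1$ is strictly negative, hence $\partial_\gamma\psi(\gamma,\lambda)<0$ for all $\gamma>0$ and vanishes only at $\gamma=0$. Therefore $\psi(\cdot,\lambda)$ is strictly decreasing on $[0,\infty)$, so its unique maximum on $\gamma\ge 0$ is attained at $\gamma_{\max}=0$, and direct substitution gives $\psi(0,\lambda)=1/\lambda$.

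For part (b), with $\lambda\in[1,\infty)$, the same expression shows $\partial_\gamma\psi(\gamma,\lambda)\ge 0$ for all $\gamma\ge 0$, so $\psi(\cdot,\lambda)$ is non-decreasing. For the bound, I would either compute the supremum as a limit,
\[
\lim_{\gamma\to\infty}\psi(\gamma,\lambda)=\lim_{\gamma\to\infty}\frac{\gamma^2+1}{\gamma^2+\lambda}=1,
\]
which together with monotonicity yields $\psi(\gamma,\lambda)\le 1$, or equivalently verify the inequality algebraically by noting that $\psi(\gamma,\lambda)\le 1 \iff \gamma^2+1\le \gamma^2+\lambda \iff 1\le\lambda$, which holds by hypothesis.

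No step is really an obstacle here; the only mild subtlety is to be explicit that at $\gamma=0$ in part (a) the derivative vanishes yet the point is still the unique maximizer (which follows from strict decrease on $(0,\infty)$), and that in part (b) the bound $\psi\le 1$ holds with equality precisely when $\lambda=1$, which is consistent with the ``non-decreasing'' statement (constant equal to $1$ in that boundary case).
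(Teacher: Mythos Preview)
Your proof is correct and follows essentially the same approach as the paper: compute $\partial_\gamma\psi=2\gamma(\lambda-1)/(\gamma^2+\lambda)^2$, use the sign of $\lambda-1$ to determine monotonicity, and combine with the limit $\psi\to 1$ as $\gamma\to\infty$. In fact your evaluation $\psi(0,\lambda)=1/\lambda$ is the correct one (the paper's printed value $1/2$ there is a typo).
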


\begin{proof}
First, note that for any fixed $\lambda$, we have 
\begin{equation}\label{psi_lim_dir}
	\lim_{ \gamma \rightarrow +\infty} \psi (\gamma, \lambda) = 1 = \lim_{ \gamma \rightarrow -\infty} \psi (\gamma, \lambda).
\end{equation}
Now, given $\lambda > 0$, observe that 
\begin{equation}\label{deriv_psi}
	\frac{ \partial \psi}{ \partial \gamma} (\gamma, \lambda) = \frac{ 2(\lambda - 1) \gamma}{(\gamma^2+ \lambda)^2 }.
\end{equation}
\begin{itemize}
	\item[(a)] Consider $\lambda \in ]0,1[$ fixed. As the maximizers of $\psi(\gamma,\lambda)$, for $\gamma \in \R$, must satisfy $\frac{ \partial \psi}{ \partial \gamma} (\gamma, \lambda) = 0,$ from \eqref{deriv_psi}, we have
	\begin{equation*} 
		2(\lambda - 1) \gamma = 0 \quad \Rightarrow \quad \gamma_{\max} = 0,
	\end{equation*}
	Substituting $\gamma_{\max}$ into $\psi(\gamma,\lambda)$, we obtain
	\begin{equation*}
		\psi(\gamma_{\max},\lambda) = \frac{1}{\lambda}.
	\end{equation*}

	\item[(b)] Now, for fixed $\lambda \in [1,+\infty)$, from \eqref{deriv_psi}, we have $\frac{ \partial \psi}{ \partial \gamma} (\gamma, \lambda) > 0$ for $\gamma > 0$. Therefore, $\psi$ is increasing for $\gamma \in [0, +\infty)$, with $\psi(0,\lambda)=1/\lambda \leq 1$. Then, from \eqref{psi_lim_dir}, we conclude that $\psi (\gamma, \lambda) \leq 1$, for all $\gamma \geq 0$. \qed 
\end{itemize}
\end{proof}

We are now ready to prove two key lemmas for the local convergence analysis. 
They show that the norm of LMMSS direction $d_k$ is $O(\dist(x_k,X^*))$, considering two distinct cases. 
Lemma~\ref{lemac1postoconst} addresses the scenario where the rank of the Jacobian around a stationary-point is constant, 
while Lemma~\ref{lemac1postodiminuindo} deals with the case when the rank decreases\footnote{Observe that the rank function is lower semicontinuous thus, when a sequence $x_k$ in a neighborhood of $x^* \in X^*$ approaches $x^*$, the rank of the Jacobian $J(x_k)$ either remains constant or decreases, these are the only two possibilities.}.

\begin{lema}\label{lemac1postoconst}
Suppose that Assumption~\ref{Hip_Jdiffnew} is valid in $B(x^*, \delta)$, 
that the uniform completeness condition holds at $B(x^*, \delta)\setminus X^*$, 
and 
$$
rank(J(x)^TJ(x))=rank(J(x^*)^TJ(x^*))=q\geq 1
$$ 
for all $x\in B(x^*, \delta)$.
If $x_k \in B(x^*, \delta)$, $\lambda_k>0$ and $J_k^T F_k \ne 0$, then there exists $c_1>0$ such that
\begin{equation*}%\label{eq:dk_dist}
	\|d_k\| \leq c_1 \dist (x_k, X^*).
\end{equation*}
\end{lema}

\begin{proof}
Recall that $J_k^T F_k \in \mathcal{R}(J_k^T)= \mathcal{N}(J_k)^\perp  = \mathcal{N}(J_k^T J_k)^{\perp}$, and
\begin{equation}\label{eq:direction}
	(J_k^TJ_k + \lambda_k L^TL) d_k = -J_k^TF_k.     
\end{equation}
First, notice that $d_k\notin \mathcal{N}(J_k)$. In fact, from \eqref{eq:direction}, we have
$$ d_k^T   (J_k^TJ_k + \lambda_k L^TL) d_k = -(J_k d_k)^TF_k,$$
which implies that $\norm{J_k d_k}^2+ \lambda_k \norm{Ld_k}^2 = -(J_k d_k)^TF_k$, 
thus,  if we assume that $d_k \in \mathcal{N}(J_k)$, 
we conclude that $\lambda_k \norm{Ld_k}^2=0$. 
But $\norm{Ld_k}>0$, given that, by condition~\ref{comp-condition}, $\cN(J_k) \cap \cN(L) = 0$, and also $\lambda_k \neq 0$, 
and we have a contradiction. 

We can write $d_k= d_N+ d_R,$  with $ d_N \in \mathcal{N}(J_k)$ and $d_R \in \mathcal{N}(J_k)^\perp =\mathcal{R}(J_k^T)$. 
Since  $d_k\notin \mathcal{N}(J_k)$, we have $d_R \neq 0$.

Thus, from \eqref{eq:direction}
\begin{equation}\label{eq:dkdecomp}
	\begin{aligned}
		-J_k^TF_k &=(J_k^TJ_k + \lambda_k L^TL) d_k\\ 
		&= (J_k^TJ_k + \lambda_k L^TL) (d_N+d_R)\\
		&= J_k^TJ_kd_N + J_k^TJ_kd_R + \lambda_k L^TLd_N+ \lambda_k L^TLd_R \\
		&= J_k^TJ_kd_R + \lambda_k L^TLd_N+ \lambda_k L^TLd_R. 
	\end{aligned}
\end{equation}

When we multiply \eqref{eq:dkdecomp} by $d_N^T$, we get
$$  (J_kd_N)^TJ_kd_R + \lambda_k \norm{Ld_N}^2+ \lambda_k d_N^TL^TLd_R =-(J_kd_N)^TF_k, $$
and as $d_N \in \cN(J_k)$ and $\lambda_k > 0$, we have:
\begin{equation}\label{eq:multdn1}
	\norm{Ld_N}^2 = -(Ld_N)^TLd_R.
\end{equation}

From \eqref{eq:multdn1}, we also conclude that $\norm{Ld_N}^2 \leq \norm{Ld_N}\norm{Ld_R}$, 
and as $d_N \notin {\cal N}(L)$ we obtain
\begin{equation}\label{eq:multdn2}
	\norm{Ld_N}\leq \norm{Ld_R}.
\end{equation}

On the other hand, when we multiply \eqref{eq:dkdecomp} by $d_R^T$, we get
\begin{equation}\label{eq:multdr1}
	\norm{J_kd_R}^2 + \lambda_k (Ld_R)^TLd_N+ \lambda_k \norm{Ld_R}^2 =-(J_kd_R)^TF_k.   
\end{equation}

Using \eqref{eq:multdn1} in \eqref{eq:multdr1}, we are left with
$$ \norm{J_kd_R}^2 + \lambda_k \norm{Ld_R}^2 =-(J_kd_R)^TF_k + \lambda_k \norm{Ld_N}^2.$$

From this and \eqref{eq:multdn2}, we have
\begin{equation}\label{eq:multdr2}
	\norm{J_kd_R}^2 \leq -(J_kd_R)^TF_k.
\end{equation}

Let $J_k= U\Sigma V^T$ be the SVD for $J_k$. As $d_R \in \cN(J_k)^\perp$, $d_R$ can be written as a linear combination of the columns of $V$ corresponding to the nonzero singular values of $J_k$. Therefore,
$$\norm{J_kd_R} = \norm{U\Sigma V^Td_R}\geq s_{q,k}\norm{V^Td_R}= s_{q,k}\norm{d_R}, $$
where $s_{q,k}$ is the smallest positive singular value of $J_k$. 
By using $s_{q,k}\norm{d_R}\leq \norm{J_kd_R} $ in \eqref{eq:multdr2}:
$$s_{q,k}^2\norm{d_R}^2 \leq -(J_kd_R)^TF_k \leq \norm{J_k^TF_k}\norm{d_R},$$
which implies
\begin{equation}\label{eq:limitantedr}
	\norm{d_R} \leq \frac{1}{s_{q,k}^2}\norm{J_k^TF_k}.
\end{equation}

Now let us show that $d_R\notin \cN(L)$. Suppose $d_R\in \cN(L)$, from \eqref{eq:multdn1}, we get $\norm{Ld_N}^2= -d_N^TL^TLd_R =0,$ implying that $d_N\in \cN(L)$ and $d_N\in \cN(J_k)$, which is a contradiction with condition \eqref{kernelJL}. 

By condition \eqref{comp-condition}, there exists $\gamma > 0$ such that 
$$ \gamma\norm{d_N}^2 \leq \norm{J_kd_N}^2 + \norm{Ld_N}^2,$$
which implies 
\begin{equation}\label{eq:complem_DN}
	\norm{d_N}^2 \leq \frac{1}{\gamma}\norm{Ld_N}^2.
\end{equation}

Therefore, from \eqref{eq:multdn2}, \eqref{eq:limitantedr}, \eqref{eq:complem_DN}, and Assumption~\ref{Hip_Jdiffnew},
\begin{equation*}
	\begin{aligned}
		\norm{d_k}^2 &= \norm{d_N+d_R}^2 = \norm{d_N}^2+ 2d_N^Td_R+ \norm{d_R}^2 = \norm{d_N}^2+ \norm{d_R}^2\\
		&\leq  \frac{1}{\gamma}\norm{Ld_N}^2 + \norm{d_R}^2 \leq \frac{1}{\gamma}\norm{Ld_R}^2 + \norm{d_R}^2\\
		&\leq \Big(\frac{\norm{L}^2}{\gamma} +1\Big) \frac{1}{s_{q,k}^4}\norm{J_k^TF_k}^2\\
		&= \Big(\frac{\norm{L}^2}{\gamma} +1\Big) \frac{1}{s_{q,k}^4}\norm{J_k^TF_k - J_*^TF_*}^2 \\
		&\leq \Big(\frac{\norm{L}^2}{\gamma} +1\Big) \frac{1}{s_{q,k}^4}L_3^2 \dist^2(x_k, X^*).
	\end{aligned}
\end{equation*}

Since $J(x_k)^TJ(x_k)$  is continuous, it is clear that for a sufficiently small $\delta>0$, 
condition \eqref{eq:kappa} is satisfied for all $x_k\in B(x^*,\delta)$ and applying Lemma~\ref{lem:banach} we have
$$
\norm{d_k} \leq \Big(\sqrt{\frac{\norm{L}^2}{\gamma}+1}\Big) \frac{\kappa}{\lambda_q^*}L_3 \dist(x_k, X^*),
$$
where we recall that $\lambda_q^*$ is the smallest positive eigenvalue of $J_*^TJ_*$. 
Then define $c_1 = \Big(\sqrt{\frac{\norm{L}^2}{\gamma}+1}\Big) \frac{\kappa}{\lambda_q^*}L_3$ and we have the result.  \qed
\end{proof}

\begin{lema}\label{lemac1postodiminuindo}
Suppose that Assumptions~\ref{Hip_Jdiffnew} and \ref{Hip_error_boundnew} are valid in $B(x^*,\delta)$, for some $\delta \in ]0,1[$ 
and that the uniform completeness condition holds at $B(x^*,\delta) \setminus X^*$. 
If  $rank(J_k)= \ell \geq rank (J_*)= q \geq 1$, and \\
\ \\
(a) $x_k \in B(x^*, \delta) \setminus X^*$, Assumption~\ref{hipotese5paraconvl} is satisfied with $r \in ]0,1]$, $\lambda_k=\norm{J_k^TF_k}^r$,  or,  \\
\ \\
(b) $x_k \in B(x^*, \bar{\delta}) \setminus X^*$, Assumption~\ref{a3} is satisfied with $\sigma < (\sigma_{\min}^*)^2$, 
where $\sigma_{\min}^*$ is the smallest positive singular value of $J(x^*)$ and 
\begin{equation}\label{eq:lambda3}
L_4 \| x_k  - \bar{x}_k \| + \sigma \leq \lambda_k \leq \theta \left(  L_4 \| x_k  - \bar{x}_k \|  + \sigma\right), 
\end{equation}
where $\theta > 1$ is such that $(\sigma_{\min}^*)^2 > \theta \sigma$ and $\bar{\delta} = \min \left\{ \delta, \frac{(\sigma_{\min}^*)^2 - \theta \sigma}{\theta L_4} \right\}$, \\
\ \\
then there exists $c_1>0$ such that $\norm{d_k} \leq c_1 \dist(x_k, X^*)$.
\end{lema}

\begin{proof}
Consider the GSVD for the pair $(J_k,L)$. In this case where $rank(J_k)= \ell \geq q = rank (J_*) \geq 1$ and $rank(L)= p$ with $n-q \leq p$ and $n-\ell\leq p$ (see Remark~\ref{obs:ranks}), we observe that: 
\begin{itemize}
	\item For $1\leq i< n-\ell$, we have $\sigma_{i,k}$=0 and $\mu_{i,k}=1.$ This occurs because of the fact that $rank(J_k)=\ell$ and the relationship between the singular values of the GSVD (see Theorem~\ref{teoGSVD}). Thus, in this case, $(\Gamma_k)_{ii} = \frac{1}{\lambda_k}.$ 
	\item For $n-\ell \leq i <  n-q$, if $\sigma_{i,k} \rightarrow 0$, then $\mu_{i,k} \rightarrow 1$, and we have $(\Gamma_k)_{ii} \rightarrow \frac{1}{\lambda_k}$.
	\item For $n-q \leq  i\leq p$, then $(\Gamma_k)_{ii}= \frac{1}{\sigma_{i,k}^2+\lambda_k \mu_{i,k}^2} < \frac{1}{\sigma_{i,k}^2}\leq \frac{\kappa}{(\sigma^{*}_{\text{min}})^2 },$
	where $\sigma^{*}_{\text{min}}$ is the smallest positive singular value of $J(x^*)$ and $\kappa > 1$ (cf. Lemma~\ref{lem:banach}). 
\end{itemize}

Note that, for $x_k \in B(x^*,\delta)$ and due to the continuity of $\mu_{i,k}$, 
in the case where $n-\ell \leq i <n-q$ we can assume that $\mu_{i,k}> \frac{1}{\sqrt{\kappa}}$ 
and thus, for $\delta$ sufficiently small, we have $(\Gamma_k)_{ii} \leq \frac{\kappa}{\lambda_k}$. 

(a) If $\lambda_k = \| J_k^T F_k \|$, for $\delta$ small enough, $\lambda_k  < \min \{ (\sigma_{\min}^*)^2/\kappa, 1 \}$, and we obtain
\begin{equation}\label{eq:normamatrizdiag}
	\norm{\left[ \begin{array}{cc}
			\Gamma_k & 0 \\ 
			0 & I_{n-p}
		\end{array}  \right]} \leq \max{\Big[\frac{1}{\lambda_k}, \frac{\kappa}{\lambda_k},\frac{\kappa}{(\sigma^{*}_{\text{min}})^2}, 1 \Big]} \leq \frac{\kappa}{\lambda_k}.
\end{equation}

From \eqref{eq:dkGSVD} and \eqref{eq:normamatrizdiag}, we get

\begin{equation}\label{eq:dkGSVDnorm}
	\begin{aligned}
		\norm{d_k} &= \norm{X_k \left[ \begin{array}{cc}
				\Gamma_k & 0 \\ 
				0 & I_{n-p}
			\end{array}  \right] X_k^T {J_k}^T F_k} \\ 
		&= \left\|X_k \left[ \begin{array}{cc}
			\Gamma_k & 0 \\ 
			0 & I_{n-p}
		\end{array}  \right] X_k^T \Big({J_k}^T F_k  - J(\bar{x}_k)^TF(\bar{x}_k)-\right.\\
		&\qquad \left.-J_k^TJ_k(x_k-\bar{x}_k)+J_k^TJ_k(x_k-\bar{x}_k)\Big)\right\| \\
		&\leq   \norm{X_k}^2 \frac{\kappa}{\lambda_k}\norm{{J_k}^T F_k  - J(\bar{x}_k)^TF(\bar{x}_k)-J_k^TJ_k(x_k-\bar{x}_k)}\\&+ \norm{X_k \left[ \begin{array}{cc}
				\Gamma_k & 0 \\ 
				0 & I_{n-p}
			\end{array}  \right] X_k^T J_k^TJ_k(x_k-\bar{x}_k)}.
	\end{aligned}
\end{equation}

Observe from GSVD that 
\begin{equation*}
	\begin{aligned}
		X_k J_k^TJ_k &= X_k X_k^{-T} \left[ \begin{array}{cc}
			\Sigma_k & 0 \\
			0 &I_{n-p}
		\end{array}  \right] U_k^T U_k \left[ \begin{array}{cc}
			\Sigma_k & 0 \\
			0 &I_{n-p}
		\end{array}  \right] X_k^{-1} = \left[ \begin{array}{cc}
			\Sigma_k^2 & 0 \\
			0 &I_{n-p}
		\end{array}  \right]X_k^{-1},
	\end{aligned}
\end{equation*} 
then, as $\Gamma_k:=(\Sigma_k^2 + \lambda_k M_k^2)^{-1}$, 

\begin{equation}\label{eq:dkJtJ}
	\begin{aligned}
		\norm{X_k \left[ \begin{array}{cc}
				\Gamma_k & 0 \\ 
				0 & I_{n-p}
			\end{array}  \right] X_k^T J_k^TJ_k(x_k-\bar{x}_k)} &\leq \norm{X_k} \zeta \norm{X_k^{-1}}\norm{x_k-\bar{x}_k} \\
		&\leq \norm{X_k}\norm{X_k^{-1}}\norm{x_k-\bar{x}_k},
	\end{aligned}
\end{equation}
where this last inequality comes from the fact that 
$$ \norm{\left[ \begin{array}{cc}
		(\Sigma_k^2 + \lambda_k M_k^2)^{-1}\Sigma_k^2 & 0 \\ 
		0 & I_{n-p}
	\end{array}  \right]} \leq  \max\Big\{\max_{1\leq i\leq p}{\Big\{\frac{\sigma_{i,k}^2}{\sigma_{i,k}^2+\lambda_k\mu_{i,k}^2}}\Big\},1\Big\} =: \zeta \leq 1$$ with $\sigma_{i,k}$ and $\mu_{i,k}$ as in \eqref{eq:sigmamuigsvd}.

Therefore, using \eqref{eq:dkGSVD}, \eqref{eq:dkJtJ}, \eqref{eq:L4}, \eqref{eq:limnormXk-1} and %the fact that $\norm{X_k}^2 \leq \frac{1}{\gamma}$ (see 
Lemma~\ref{lem:normXk} in \eqref{eq:dkGSVDnorm}, we have
\begin{equation}\label{eq:dkGSVDnorm2}
	\begin{aligned}
		\norm{d_k} 
		&\leq   \norm{X_k}^2 \frac{\kappa}{\lambda_k}\norm{{J_k}^T F_k  - J(\bar{x}_k)^TF(\bar{x}_k)-J_k^TJ_k(x_k-\bar{x}_k)}\\&+ \norm{X_k}\norm{X_k^{-1}}\norm{x_k-\bar{x}_k} \\
		&\leq \frac{\kappa}{\gamma\lambda_k}L_4\norm{x_k-\bar{x}_k}^2 +\frac{\kappa}{\gamma\lambda_k}\norm{J_k^TF(\bar{x}_k)}+ \frac{\sqrt{L_2^2+\norm{L}^2}}{\sqrt{\gamma}}\norm{x_k-\bar{x}_k}. 
	\end{aligned}
\end{equation}

From Assumptions~\ref{Hip_error_boundnew} and \ref{hipotese5paraconvl}, the choice $\lambda_k = \| J_k^T F_k\|^r$ and \eqref{eq:associationerrorwithL3}, 
we obtain from \eqref{eq:dkGSVDnorm2} that 
\begin{align*}
		\norm{d_k}  & \leq \left( \frac{\kappa L_4}{\gamma \omega^r} + \frac{\kappa C}{\gamma \omega^r} + \frac{\sqrt{L_2^2+\norm{L}^2}}{\sqrt{\gamma}}  \right) \norm{x_k - \bar{x}_k},
\end{align*}
which concludes the proof for (a) with  $c_1 = \kappa(L_4 + C)/\gamma \omega^r + \sqrt{L_2^2+\norm{L}^2}/\sqrt{\gamma}$. 

%(b) If $\lambda_k \geq \underline{\lambda} > 0$, then we obtain
(b) Let us denote 
\begin{equation*}%\label{eq:normamatrizdiag2}
	\Theta :=  \max{\Big[\frac{1}{\lambda_k}, \frac{\kappa}{\lambda_k},\frac{\kappa}{(\sigma^{*}_{\text{min}})^2}, 1 \Big]},
\end{equation*}

Instead of \eqref{eq:dkGSVDnorm2}, we may write 
\begin{equation*}
\| d_k \| \leq  \frac{\Theta}{\gamma}L_4\norm{x_k-\bar{x}_k}^2 +\frac{\Theta}{\gamma}\norm{J_k^T F(\bar{x}_k)} + \frac{\sqrt{L_2^2+\norm{L}^2}}{\sqrt{\gamma}}\norm{x_k-\bar{x}_k}. 
\end{equation*}

Then, from Assumption~\ref{a3}, we have 
\begin{align*}
		\norm{d_k}  & \leq \left( \frac{\Theta}{\gamma } (L_4 \| x_k - \bar{x}_k \| +  \sigma) + \frac{\sqrt{L_2^2+\norm{L}^2}}{\sqrt{\gamma}}  \right) \norm{x_k - \bar{x}_k}.
\end{align*}

Since, $\| x_k - \bar{x}_k \| \leq \delta \leq  \frac{(\sigma_{\min}^*)^2 - \theta \sigma}{\theta L_4}$, it follows that $\lambda_k \leq (\sigma_{\min}^*)^2$. 
Then, $\kappa / \lambda_k \geq \kappa/(\sigma_{\min}^*)^2$. 
Clearly, $\kappa / \lambda_k > 1/ \lambda_k$ because $\kappa > 1$. 
Hence, $\Theta = \max \{ 1, \kappa/\lambda_k \}$. 

Now, assume $\lambda_k$ from \eqref{eq:lambda3}.  In case $\Theta = \kappa / \lambda_k$, the above inequality becomes
\[
		\norm{d_k}  \leq \left( \frac{\kappa}{\gamma }  + \frac{\sqrt{L_2^2+\norm{L}^2}}{\sqrt{\gamma}}  \right) \norm{x_k - \bar{x}_k},
\]
If $\Theta = 1$, from the upper bound on $\delta$ in (b) we obtain
\[
		\norm{d_k}  \leq \left( \frac{(\sigma_{\min}^*)^2}{\theta \gamma }  + \frac{\sqrt{L_2^2+\norm{L}^2}}{\sqrt{\gamma}}  \right) \norm{x_k - \bar{x}_k}.
\]
Therefore, we get the proof for (b) with  $c_1 = (1/\gamma)\max \{ \kappa, (\sigma_{\min}^*)^2/\theta \}  + \sqrt{L_2^2+\norm{L}^2}/\sqrt{\gamma}$. \qed
\end{proof}

\section{Local convergence}\label{sec:local}

Now we focus on the local convergence of the ``pure'' LMMSS iteration, i.e iteration \eqref{s1}--\eqref{s2} with $\alpha_k=1$. We assume the initial point $x_0$ is in a neighborhood of a (possibly nonisolated) stationary point $x^* \in X^*$. 

Lemmas~\ref{lemac1postoconst} and \ref{lemac1postodiminuindo} showed that $\norm{d_k} \leq c_1 \dist(x_k,X^*)$. 
As we shall see in this section, such inequality is key for the local convergence analysis under an error bound condition (Assumption~\ref{Hip_error_boundnew}). 
As we mentioned in the introduction, the analysis developed in this section is based on the work \cite{behling2019local} and extends it to handle singular scaling in LM methods.

Apart from the next lemma which is an intermediate result, the remaining of this section is organized in subsections according to whether the Jacobian rank near the solution set is constant or not. 

%Throughout this section, we consider the LMMSS iteration \eqref{s1}--\eqref{s2} with $\alpha_k = 1$ for every $k$, that is, the ``pure'' LMMSS.

\begin{lema}\label{Lem: Auxiliarconvgeral}
Let $x^* \in X^*$ and $\{x_k\}$ be the sequence generated by the LMMSS (with $\alpha_k=1$). 
Suppose Assumptions~\ref{Hip_Jdiffnew} and \ref{Hip_error_boundnew} holds in $B(x^*,\delta)$, for some $\delta \in ]0,1[$ 
and that the uniform completeness condition is valid in $B(x^*,\delta)\setminus X^*$. 
If $x_{k+1}, x_k \in B(x^*,\delta)$ and $\norm{d_k} \leq c_1\norm{x_k - \bar{x}_k}$, then

\begin{equation}\label{eq:limitacaoomega}
	\begin{aligned}
		\omega \dist(x_{k+1}, X^*) &\leq (L_4c_1^2 + L_5)||x_k - \bar{x}_k||^2 + \lambda_k\norm{L}^2c_1||x_k - \bar{x}_k|| \\&+ ||J(x_k)^TF(\bar{x}_k)|| + ||J(x_{k+1})^TF(\bar{x}_k)||,
	\end{aligned}
\end{equation}

where $L_5 := L_0L_2(1 + c_1)(2 + c_1)$.    
\end{lema}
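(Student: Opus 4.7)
The plan is to start from the error bound $\omega\,\dist(x_{k+1},X^*)\le \|\nabla\phi(x_{k+1})\|=\|J(x_{k+1})^T F(x_{k+1})\|$, guaranteed by Assumption~\ref{Hip_error_boundnew} once we argue $x_{k+1}\in B(x^*,\delta)$ (which is assumed), and then upper bound $\|\nabla\phi(x_{k+1})\|$ by quantities involving $\|x_k-\bar x_k\|$, $\|d_k\|$ and the residual-dependent terms $\|J(x_k)^T F(\bar x_k)\|$ and $\|J(x_{k+1})^T F(\bar x_k)\|$.

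The key step is to apply Lemma~\ref{lem: L4} with $x=x_k$ and $y=x_{k+1}=x_k+d_k$, which yields
\[
\bigl\|\nabla\phi(x_{k+1})-\nabla\phi(x_k)-J_k^T J_k d_k\bigr\|\le L_4\|d_k\|^2+\bigl\|(J(x_k)-J(x_{k+1}))^T F(x_{k+1})\bigr\|.
\]
From the LMMSS equation \eqref{s1} we have $\nabla\phi(x_k)+J_k^T J_k d_k=J_k^T F_k+J_k^T J_k d_k=-\lambda_k L^T L d_k$, so triangle inequality combined with the previous estimate produces
\[
\|\nabla\phi(x_{k+1})\|\le \lambda_k\|L\|^2\|d_k\|+L_4\|d_k\|^2+\bigl\|(J(x_k)-J(x_{k+1}))^T F(x_{k+1})\bigr\|.
\]
Using the hypothesis $\|d_k\|\le c_1\|x_k-\bar x_k\|$ converts the first two terms into exactly the form appearing in \eqref{eq:limitacaoomega}, modulo the remaining cross term.

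To handle $\|(J(x_k)-J(x_{k+1}))^T F(x_{k+1})\|$ I would invoke \eqref{eq:auxhiplips} with $x=x_k$, $y=x_{k+1}$ and $z=\bar x_k\in X^*$, which gives
\[
\bigl\|(J(x_k)-J(x_{k+1}))^T F(x_{k+1})\bigr\|\le L_0L_2\|x_k-\bar x_k\|\|x_{k+1}-\bar x_k\|+L_0L_2\|x_{k+1}-\bar x_k\|^2+\|J(x_k)^T F(\bar x_k)\|+\|J(x_{k+1})^T F(\bar x_k)\|.
\]
Next I would use $\|x_{k+1}-\bar x_k\|\le \|d_k\|+\|x_k-\bar x_k\|\le(c_1+1)\|x_k-\bar x_k\|$ to turn the two quadratic terms into $L_0L_2(c_1+1)\|x_k-\bar x_k\|^2$ and $L_0L_2(c_1+1)^2\|x_k-\bar x_k\|^2$ respectively; their sum equals $L_0L_2(c_1+1)(c_1+2)\|x_k-\bar x_k\|^2=L_5\|x_k-\bar x_k\|^2$, matching the definition of $L_5$ in the statement.

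Putting everything together and dividing by $\omega$-nothing (keeping the inequality $\omega\,\dist(x_{k+1},X^*)\le \|\nabla\phi(x_{k+1})\|$) delivers \eqref{eq:limitacaoomega}. The proof is mainly bookkeeping: there is no deep obstacle, but one must be careful that $\bar x_k$ lies in $B(x^*,\delta)$ so that the Lipschitz estimates underlying \eqref{eq:auxhiplips} and Lemma~\ref{lem: L4} apply. This follows since $\|\bar x_k-x^*\|\le \|\bar x_k-x_k\|+\|x_k-x^*\|\le 2\|x_k-x^*\|<2\delta$, and by shrinking $\delta$ if necessary at the outset of the local analysis this is harmless. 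The most delicate point in the argument is therefore the clean cancellation between $\nabla\phi(x_k)$ and $J_k^T J_k d_k$ via \eqref{s1}, which is exactly what produces the $\lambda_k\|L\|^2\|d_k\|$ term as the sole first-order contribution to the bound.
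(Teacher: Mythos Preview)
Your proposal is correct and follows essentially the same route as the paper: apply Lemma~\ref{lem: L4} with $x=x_k$, $y=x_{k+1}$, use the LMMSS equation \eqref{s1} to reduce $\nabla\phi(x_k)+J_k^TJ_kd_k$ to $-\lambda_k L^TLd_k$, bound the cross term via \eqref{eq:auxhiplips} with $z=\bar x_k$, and finish with $\|x_{k+1}-\bar x_k\|\le(1+c_1)\|x_k-\bar x_k\|$. Your side remark about $\bar x_k\in B(x^*,\delta)$ is exactly why the subsequent lemmas in the paper work in $B(x^*,\delta/2)$.
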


\begin{proof}
For any $x, y \in \mathbb{R}^n$, the reverse triangle inequality implies
$$
\|\nabla \phi(y) - \nabla \phi(x) - J(x)^TJ(x)(y - x)\| \geq \|\nabla \phi(y)\| - \|\nabla \phi(x) - J(x)^TJ(x)(y - x)\|.
$$

Thus, for $y=x_{k+1}$, $x = x_k$, $x_{k+1}, x_k \in B(x^*,\delta)$ and using Lemma~\ref{lem:L4}, we have: 
\begin{equation}\label{eq:auxilema4.1}
	\begin{aligned}
		\|\nabla \phi(x_{k+1})\| &\leq \|\nabla\phi (x_{k+1}) - \nabla \phi(x_k) - J_k^TJ_kd_k\| + \|\nabla \phi(x_k) + J_k^TJ_kd_k\|\\
		&\leq L_4\|d_k\|^2 + \|(J_k - J_{k+1})^TF_{k+1}\| + \|J_k^TF_k + J_k^TJ_kd_k\|.   
	\end{aligned}
\end{equation}

From the error bound condition (Assumption~\ref{Hip_error_boundnew}), the definition of the LMMSS iteration and \eqref{eq:auxilema4.1}, we get

\begin{equation}\label{eq:auxilema4.1(2)}
	\omega \dist(x_{k+1},X^*) \leq L_4\|d_k\|^2 + \|(J_k - J_{k+1})^TF_{k+1}\| +  \lambda_k\norm{L}^2\norm{d_k}.
\end{equation}

Now, rewriting \eqref{eq:auxhiplips}, we have
\begin{equation}\label{eq:auxilema4.1(3)}
	\begin{aligned}
		\|(J_k - J_{k+1})^TF_{k+1}\| &\leq L_0L_2\|x_k - \bar{x}_k\|\|x_{k+1} - \bar{x}_k\| + \|J_k^TF(\bar{x}_k)\|
		\\&+ L_0L_2\|x_{k+1} - \bar{x}_k\|^2 + \|J_{k+1}^TF(\bar{x}_k)\|.    
	\end{aligned}
\end{equation}

Furthermore, since $\norm{d_k}\leq c_1\norm{x_k-\bar{x}_k}$, it holds  
\begin{equation}\label{eq:auxilema4.1(4)}
	\|x_{k+1} - \bar{x}_k\| = \|x_k - \bar{x}_k + x_{k+1} - x_k\| \leq \|x_k - \bar{x}_k\| + \|d_k\| \leq (1+c_1)\|x_k - \bar{x}_k\|.
\end{equation}

Thus, using \eqref{eq:auxilema4.1(3)} and \eqref{eq:auxilema4.1(4)} in \eqref{eq:auxilema4.1(2)} yields
\begin{equation*}
	\begin{aligned}
		\omega \dist(x_{k+1},X^*) &\leq L_4\|d_k\|^2 + L_0L_2\|x_k - \bar{x}_k\|\|x_{k+1} - \bar{x}_k\| + \|J_k^TF(\bar{x}_k)\|\\
		&+ L_0L_2\|x_{k+1} - \bar{x}_k\|^2 + \|J_{k+1}F(\bar{x}_k)\| +  \lambda_k\norm{L}^2\norm{d_k}\\
		&\leq L_4\|d_k\|^2 +L_0L_2(1+c_1)\|x_k - \bar{x}_k\|^2 + \|J_k^TF(\bar{x}_k)\| \\ &+L_0L_2 (1+c_1)^2\|x_k - \bar{x}_k\|^2 + \|J_{k+1}^TF(\bar{x}_k)\| +  \lambda_k\norm{L}^2\norm{d_k}   \\ &\leq L_4c_1^2\|x_k - \bar{x}_k\|^2 + \lambda_k\norm{L}^2c_1\|x_k - \bar{x}_k\| \\&+ L_0L_2(1 + c_1)\|x_k - \bar{x}_k\|^2 + L_0L_2(1 + c_1)^2\|x_k- \bar{x}_k\|^2 \\&+ \|J_k^TF(\bar{x}_k)\| + \|J_{k+1}^TF(\bar{x}_k)\|,
	\end{aligned}
\end{equation*}
concluding the proof. \qed
\end{proof}

%%%%%%% POSTO CONSTANTE 
\subsection{Constant Rank}\label{subsec:constrank}
In this section, we assume that $1 \leq \text{rank}(J(x^*)) = q \leq \min\{n, m\}$, 
and $\text{rank}(J(x)) = \text{rank}(J(x^*))$ for each $x\in B(x^*, \delta)$. 
From Lemma~\ref{lemac1postoconst}, we recall that $\left\|d_k\right\| \leq c_1 \text{dist}(x_k, X^*)$, 
with $c_1 = \Big(\sqrt{\frac{\norm{L}^2}{\gamma}+1}\Big) \frac{\kappa}{\lambda_p^*}L_3$.
%provided that $\lambda_k > 0,$ for $x_k \neq x^*$.

%%%%%%%%%%%%%%%%%%%%%% HIPOTESE FRACA
Next, we present two lemmas that will aid in proving the convergence theorem under Assumption~\ref{a3}. 

\begin{lema}\label{Lem:Auxconv1hip4}
    Let $x^* \in X^*$, suppose that Assumptions~\ref{Hip_Jdiffnew} and \ref{Hip_error_boundnew} are valid in $B(x^*, \delta)$ for some $\delta \in ]0,1[$, 
    and the uniform completeness condition holds at $B(x^*, \delta)\setminus X^*$. 
    Assume $rank(J(x)) = rank(J(x^*)) \geq 1$ in $B(x^*, \delta)$,  
    that Assumption~\ref{a3} is verified with $\sigma < \bar{\sigma} := \omega/(2 + c_1)$, 
    and $\lambda_k = \|J_k^TF_k\|$.  
    If $x_k, x_{k+1} \in B(x^*, \delta/2)$ and $\dist(x_k, X^*) < \varepsilon$, where
    \begin{equation*}%\label{eq:varepsilonconvhip4}
        \varepsilon \leq\frac{\eta\omega - (2 + c_1)\sigma}{L_6},
    \end{equation*}
    with $\eta \in \left]\frac{\sigma (2 + c_1)}{\omega}, 1 \right[$ and $L_6=L_4c_1^2 + L_5 + L_3\norm{L}^{2}c_1$, then $$\dist(x_{k+1}, X^*) \leq \eta \, \dist(x_k, X^*).$$
\end{lema}
\begin{proof}
Since $J(x)$ has constant rank in $B(x^*, \delta)$, $\lambda_k = \|J_k^TF_k\|$, and Assumption~\ref{a3} is satisfied, from \eqref{eq:limitacaoomega} and \eqref{eqhipL3new}, we have
    \begin{equation*}
        \begin{aligned}
            \omega \dist(x_{k+1}, X^*) &\leq (L_4c_1^2 + L_5)||x_k - \bar{x}_k||^2 + \lambda_k\norm{L}^2c_1||x_k - \bar{x}_k|| \\&+ ||J(x_k)^TF(\bar{x}_k)|| + ||J(x_{k+1})^TF(\bar{x}_k)||  \\
            &\leq (L_4c_1^2 + L_5)\| x_k - \bar{x}_k\|^2 + L_3\norm{L}^2c_1\|x_k - \bar{x}_k\|^2\\
            &+ \sigma\|x_k - \bar{x}_k\| + \sigma\|x_{k+1} - \bar{x}_k\|\\
            &\leq (L_4c_1^2 + L_5 + L_3\norm{L}^2c_1)\|x_k - \bar{x}_k\|^2+ \sigma\|x_k - \bar{x}_k\| \\&+ (1+c_1)\sigma\|x_k - \bar{x}_k\|  \\
            &\leq (L_4c_1^2 + L_5 + L_3\norm{L}^2c_1)\|x_k - \bar{x}_k\|^2 + (2 + c_1)\sigma\|x_k - \bar{x}_k\| \\
            &\leq \Big[(L_4c_1^2 + L_5 + L_3\norm{L}^2c_1)\varepsilon + (2 + c_1)\sigma \Big] \|x_k - \bar{x}_k\|.
        \end{aligned}
    \end{equation*}
    As $\sigma < \frac{\omega}{2 + c_1}$, for any $\eta \in (\frac{\sigma(2 + c_1)}{\omega}, 1)$, we have $\eta\omega - (2 + c_1)\sigma > 0$. 
    Denoting $L_6 := L_4c_1^2 + L_5 + L_3\norm{L}^2c_1$, for $\varepsilon \leq \frac{\eta\omega - (2 + c_1)\sigma}{L_6}$, we obtain $\dist(x_{k+1}, X^*) \leq \eta \dist(x_k, X^*)$.  \qed 
\end{proof}

%The condition $\varepsilon \leq \frac{\frac{\delta}{2}}{ 1 + \frac{c_1}{1 - \eta}} = \frac{\delta}{2} \Big(\frac{1-\eta}{1-\eta+c_1}\Big) < \frac{\delta}{2},$ also allows us to prove the following result.

\begin{lema}
    \cite[Lemma 4.3]{behling2019local}\label{Lem:Auxconv2}
    Suppose that the assumptions of Lemma~\ref{Lem:Auxconv1hip4} are satisfied, and consider 
    \begin{equation}\label{eq:epsA4CR}
            \varepsilon = \min\left\{\frac{\frac{\delta}{2}}{ 1 + \frac{c_1}{1 - \eta}}, \frac{\eta\omega - (2 + c_1)\sigma}{L_6}\right\}.
    \end{equation}
    If $x_0 \in B(x^*, \varepsilon)$, then $x_{k+1} \in B(x^*, \frac{\delta}{2})$ and $\dist(x_k, X^*) \leq \varepsilon$, for every $k \in \mathbb{N}$.
\end{lema}

\begin{proof}
    The proof is done by induction. For $k = 0$, we have $\dist(x_0, X^*) \leq \|x_0 - x^*\| \leq \varepsilon$, thus 
    \[
        \|x_1 - x^*\| \leq \|x_1 - x_0\| + \|x_0 - x^*\| \leq \|d_0\| + \varepsilon
        \leq c_1\dist(x_0, X^*) + \varepsilon \leq (1 + c_1)\varepsilon < \frac{\delta}{2},
    \]
    and by Lemma~~\ref{Lem:Auxconv1hip4}, $ \dist(x_1,X^*)  \leq \eta \dist(x_0,X^*) \leq \varepsilon$.

	From the induction hypothesis for $i \leq k$, $x_i \in B(x^*, \frac{\delta}{2})$ and $\dist(x_{i-1}, X^*) \leq \varepsilon$, for $i = 1, . . . , k$. 
	Then, by successively applying Lemma~\ref{Lem:Auxconv1hip4} to $x_{i-1}$ and $x_i$,  for $i = 1, . . . , k$, we have 
    \[
        \dist(x_k, X^*) \leq \eta \dist(x_{k-1}, X^*) \leq \cdots \leq \eta^k \dist(x_0, X^*) \leq \eta^k \varepsilon < \varepsilon.
    \]	
    
    Finally, using Lemma~\ref{lemac1postoconst},
    \begin{equation*}
        \begin{aligned}
            \|x_{k+1} - x^*\| &\leq \|x_1 - x^*\| + \sum_{i=1}^{k} \|d_i\| \leq \|x_1 - x^*\| + \sum_{i=1}^{k} c_1\dist(x_i, X^*)\\
            &\leq (1 + c_1)\varepsilon + c_1\varepsilon \sum_{i=1}^{\infty} \eta^i = \Big(1+ \frac{ c_1}{1 - \eta}\Big)\varepsilon \leq \frac{\delta}{2},    
        \end{aligned}
    \end{equation*}
    which completes the proof. \qed 
\end{proof}

\begin{teo}\label{teoconvlocahip4}
Let $x^* \in X^*$, suppose that Assumptions~\ref{Hip_Jdiffnew}, \ref{Hip_error_boundnew} and \ref{a3} are valid 
in  $B(x^*, \delta)$ for some $\delta \in ]0,1[$ and $\bar{\sigma} = \omega/(2 + c_1)$,   
$rank(J(x)) = rank(J(x^*)) \geq 1$ in $B(x^*, \delta)$,  
and that the uniform completeness condition holds at $B(x^*, \delta)\setminus X^*$. 
Let $\{ x_k \}$ be generated by the LMMSS method with $\alpha_k=1$, $\lambda_k = \|J_k^TF_k\|$ for all $k$, 
and $x_0 \in B(x^*, \varepsilon)$, where $\varepsilon > 0$ is given by \eqref{eq:epsA4CR}. 
Then, $\{\dist(x_k, X^*)\} $ converges linearly to zero. 
Moreover, the sequence $\{x_k\}$ converges to some $\bar{x} \in X^* \cap B(x^*, \frac{\delta}{2})$.
\end{teo}

\begin{proof}
    The linearly convergence of $\dist(x_k, X^*)$ to zero follows directly from Lemma~\ref{Lem:Auxconv1hip4} and Lemma~\ref{Lem:Auxconv2}.

    Once $\dist(x_k, X^*)$ converges to zero and $x_k \in B(x^*, \frac{\delta}{2})$ for every $k$, it remains to show that $\{ x_k \}$ converges. From Lemma \ref{lemac1postoconst} and Lemma \ref{Lem:Auxconv1hip4}, we have
    \[
        \|d_k\| \leq c_1\text{dist}(x_k, X^*) \leq c_1\eta^k \text{dist}(x_0, X^*) \leq c_1\varepsilon \eta^k,
    \]
    for every $k \geq 1$. Thus, for any positive integers $\ell$ and $q$, with $\ell \geq q$,
    \[
        \|x_\ell - x_q\| \leq \sum_{i=q}^{\ell-1} \|d_i\| \leq \sum_{i=q}^{\infty} \|d_i\| \leq c_1\varepsilon \sum_{i=q}^{\infty} \eta^i \leq  \frac{c_1\varepsilon}{1 - \eta} ,
    \]
    implying that $\{x_k\} \subset \mathbb{R}^n$ is a Cauchy sequence and, therefore, converges. \qed 
\end{proof}

%%%%%%%%%%%%%%%%%%% HIPOTESE FORTE
%Next, we present two lemmas that will aid in proving the convergence theorem under Assumption~\ref{hipotese5paraconvl}.
Now we demonstrate that under Assumption~\ref{hipotese5paraconvl}, which is stronger than Assumption~\ref{a3}, the local convergence can be superlinear. 
In the lemmata and theorems that follows we shall suppress algebraic manipulations that are similar to the results above. 

\begin{lema}\label{Lem:Auxconv1hip5}   
Let $x^* \in X^*$, suppose that Assumptions~\ref{Hip_Jdiffnew} and \ref{Hip_error_boundnew} are valid in  $B(x^*, \delta)$ for some $\delta \in ]0,1[$, 
$rank(J(x)) = rank(J(x^*)) \geq 1$ in $B(x^*, \delta)$,  
and that the uniform completeness condition holds at $B(x^*, \delta)\setminus X^*$. 
Additionally, assume that Assumption~\ref{hipotese5paraconvl} is verified, and $\lambda_k = \|J_k^TF_k\|$. 
If $x_k, x_{k+1} \in B(x^*, \delta/2)$ and $\dist(x_k, X^*) < \varepsilon$, with 
\begin{equation*}%\label{eq:varepsilonconvhip5}
	\varepsilon \leq \left( \frac{\eta \omega}{\tilde{C}} \right)^{1/r},
\end{equation*}
with $\eta \in ]0,1[$, $\tilde{C} = L_6 + (1+(1 + c_1)^{1+r})C$  and $L_6$ as in Lemma~\ref{Lem:Auxconv1hip4}, then 
$$
\dist(x_{k+1}, X^*) \leq \eta \, \dist(x_k, X^*). 
$$
\end{lema}

\begin{proof}
%We know that $J(x)$ has constant rank in $B(x^*, \delta)$, $\lambda_k = \|J_k^TF_k\|$, and Assumption~\ref{hipotese5paraconvl} is satisfied. 
%Thus, from  \eqref{eq:limitacaoomega} and \eqref{eqhipL3new}, we have
Similarly to the proof of Lemma~\ref{Lem:Auxconv1hip4}, we have 
\begin{equation}\label{eq:hip5convlocal}
	\begin{aligned}
		\omega \dist(x_{k+1}, X^*) 
%		& \leq (L_4c_1^2 + L_5)||x_k - \bar{x}_k||^2 + \lambda_k\norm{L}^2c_1||x_k - \bar{x}_k|| \\&+ ||J(x_k)^TF(\bar{x}_k)|| + ||J(x_{k+1})^TF(\bar{x}_k)||  \\
		&\leq (L_4c_1^2 + L_5)\| x_k - \bar{x}_k\|^2 + L_3\norm{L}^2c_1\|x_k - \bar{x}_k\|^2
		\\&+ C\|x_k - \bar{x}_k\|^{1+r} + C\|x_{k+1} - \bar{x}_k\|^{1+r}\\
		&\leq (L_4c_1^2 + L_5 + L_3\norm{L}^2c_1)\|x_k - \bar{x}_k\|^2+ C\|x_k - \bar{x}_k\|^{1+r} \\ &+ C(1 + c_1)^{1+r}\|x_k - \bar{x}_k\|^{1+r}  \\
		&=: L_6\|x_k - \bar{x}_k\|^2 +(1+(1 + c_1)^{1+r})C\|x_k - \bar{x}_k\|^{1+r} \\
		&\leq L_6\|x_k - \bar{x}_k\|^{1+r} +(1+(1 + c_1)^{1+r})C\|x_k - \bar{x}_k\|^{1+r} \\
		&= \Big[L_6 + (1+(1 + c_1)^{1+r})C \Big] \norm{x_k - \bar{x}_k}^{1+r} \\
		& =: \tilde{C} \norm{x_k - \bar{x}_k}^{1+r} = \tilde{C} \dist(x_k,X^*)^{1+r}
	\end{aligned}
\end{equation}
where $L_6 := L_4c_1^2 + L_5 + L_3\norm{L}^2c_1$ and $\tilde{C} := L_6 + (1+(1 + c_1)^{1+r})C$.

From \eqref{eq:hip5convlocal}, we have 
\[
\dist(x_{k+1}, X^*) \leq \dfrac{\tilde{C}}{\omega}\dist(x_k,X^*)^{1+r} \leq \dfrac{\tilde{C}}{\omega}\varepsilon^r \dist(x_k,X^*)
\]
and, given $\eta \in ]0,1[$, for $\varepsilon \leq \left( \frac{\eta \omega}{\tilde{C}} \right)^{1/r}$ we obtain
\[
\dist(x_{k+1}, X^*) \leq \eta \dist(x_k,X^*),
\]
which concludes the proof. \qed 
\end{proof}

\begin{lema}\label{Lem:Aux2conv1hip5}
Suppose that the assumptions of Lemma~\ref{Lem:Auxconv1hip5} are satisfied, and  
\begin{equation}\label{eq:epsA5CR}
	\varepsilon = \min \left\{ \frac{\frac{\delta}{2}}{ 1 + \frac{c_1}{1 - \eta}} , \left( \frac{\eta \omega}{\tilde{C}} \right)^{1/r} \right\}.    
\end{equation}
If $x_0 \in B(x^*, \varepsilon)$, then $x_{k+1} \in B(x^*, \frac{\delta}{2})$ and $\dist(x_k, X^*) \leq \varepsilon$, for every $k \in \mathbb{N}$. 
\end{lema}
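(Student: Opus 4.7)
The approach is proof by induction on $k$, strengthening the induction hypothesis to track the geometric decay of $\dist(x_k, X^*)$. The particular form of $\varepsilon$ in \eqref{eq:epsA5CR} is engineered precisely so that (i) the second term guarantees applicability of Lemma~\ref{Lem:Auxconv1hip5} (hence a contraction with factor $\eta$) whenever $\dist(x_k, X^*) \leq \varepsilon$, and (ii) the first term controls the total displacement of iterates via a geometric series.

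My plan is to prove simultaneously, by induction on $k$, the two statements: $x_k \in B(x^*, \delta/2)$ and $\dist(x_k, X^*) \leq \eta^k \varepsilon$. The base case $k=0$ is immediate from $x_0 \in B(x^*,\varepsilon) \subseteq B(x^*,\delta/2)$ and $\dist(x_0, X^*) \leq \|x_0 - x^*\| \leq \varepsilon$. For the inductive step, assuming the claim holds for all indices $i \leq k$, I would first bound the displacement of $x_{k+1}$ using the triangle inequality and Lemma~\ref{lemac1postoconst}:
\[
\|x_{k+1} - x^*\| \leq \|x_0 - x^*\| + \sum_{i=0}^{k} \|d_i\| \leq \varepsilon + c_1 \sum_{i=0}^{k} \dist(x_i, X^*) \leq \varepsilon + c_1 \varepsilon \sum_{i=0}^{k} \eta^i \leq \varepsilon\Bigl(1 + \frac{c_1}{1-\eta}\Bigr) \leq \frac{\delta}{2},
\]
where the last inequality follows from the first term in the definition of $\varepsilon$.

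Once $x_{k+1} \in B(x^*, \delta/2)$ is established, I can invoke Lemma~\ref{Lem:Auxconv1hip5} (both iterates now lie in $B(x^*,\delta/2)$ and $\dist(x_k, X^*) \leq \eta^k \varepsilon \leq \varepsilon$, matching its hypotheses since $\varepsilon \leq (\eta\omega/\tilde{C})^{1/r}$) to obtain $\dist(x_{k+1}, X^*) \leq \eta\, \dist(x_k, X^*) \leq \eta^{k+1}\varepsilon$, which closes the induction. The desired conclusion $\dist(x_k, X^*) \leq \varepsilon$ then follows since $\eta^k \varepsilon \leq \varepsilon$, and $x_{k+1} \in B(x^*,\delta/2)$ is exactly what we proved.

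The main subtlety—not really an obstacle, but worth flagging—is the circular-looking structure: Lemma~\ref{Lem:Auxconv1hip5} requires both $x_k$ and $x_{k+1}$ in $B(x^*, \delta/2)$ before it gives the contraction, yet establishing $x_{k+1} \in B(x^*, \delta/2)$ is itself part of what we need. The resolution is to decouple the two: we first use the $\dist$ control inherited from the induction hypothesis (together with the step-size estimate $\|d_i\| \leq c_1 \dist(x_i, X^*)$) to place $x_{k+1}$ in the ball via telescoping, and only then apply the contraction lemma to propagate the $\dist$ bound. The geometric summation $\sum \eta^i = 1/(1-\eta)$ is the essential quantitative ingredient that makes the ball radius $\delta/2$ sufficient.
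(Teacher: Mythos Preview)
Your proposal is correct and follows essentially the same approach as the paper: induction using the step bound $\|d_i\|\le c_1\dist(x_i,X^*)$ from Lemma~\ref{lemac1postoconst}, the contraction from Lemma~\ref{Lem:Auxconv1hip5}, and the geometric series $\sum\eta^i=1/(1-\eta)$ to confine all iterates in $B(x^*,\delta/2)$. The only cosmetic difference is that you build the geometric decay $\dist(x_k,X^*)\le\eta^k\varepsilon$ directly into the induction hypothesis, whereas the paper derives it as a consequence inside the inductive step; the logic and the estimates are otherwise identical.
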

\begin{proof}
Same as that of Lemma~\ref{Lem:Auxconv2}. \qed 
%%%%The proof is done by induction. For $k = 0$, we have $\dist(x_0, X^*) \leq \|x_0 - x^*\| \leq \varepsilon$ and
%%%%\[
%%%%\|x_1 - x^*\| \leq \|x_1 - x_0\| + \|x_0 - x^*\| \leq \|d_0\| + \varepsilon
%%%%\leq c_1\dist(x_0, X^*) + \varepsilon \leq (1 + c_1)\varepsilon < \frac{\delta}{2}.
%%%%\]
%%%%
%%%%For $k \geq 1$, assume that $x_i \in B(x^*, \frac{\delta}{2})$ and $\dist(x_{i-1}, X^*) \leq \varepsilon$ for $i = 1, . . . , k$. Then, from Lemma~\ref{Lem:Auxconv1hip5} applied to $x_{k-1}$ and $x_k$, we obtain
%%%%\[
%%%%\|x_k - \bar{x}_k\| = \dist(x_k, X^*) \leq \eta \dist(x_{k-1}, X^*) \leq \eta \varepsilon < \varepsilon.
%%%%\]
%%%%
%%%%Moreover, by Lemma~\ref{Lem:Auxconv1hip5} and the induction hypothesis for $i \leq k$, we have
%%%%\[
%%%%\dist(x_i, X^*) \leq \eta \dist(x_{i-1}, X^*) \leq \cdots \leq \eta^i \dist(x_0, X^*) \leq \eta^i \varepsilon < \varepsilon.
%%%%\]
%%%%Thus, using Lemma~\ref{lemac1postoconst},
%%%%\begin{equation}
%%%%	\begin{aligned}
%%%%		\|x_{k+1} - x^*\| &\leq \|x_1 - x^*\| + \sum_{i=1}^{k} \|d_i\| \\
%%%%		&\leq \|x_1 - x^*\| + \sum_{i=1}^{k} c_1\dist(x_i, X^*)\\
%%%%		&\leq (1 + c_1)\varepsilon + c_1\varepsilon \sum_{i=1}^{\infty} \eta^i \\
%%%%		&= \Big(1+ \frac{ c_1}{1 - \eta}\Big)\varepsilon \leq \frac{\delta}{2},     
%%%%	\end{aligned}
%%%%\end{equation}
%%%%concluding the proof. \qed 
\end{proof}

\begin{teo}\label{teoconvlocalhip5CR}
Let $x^* \in X^*$, suppose that Assumptions~\ref{Hip_Jdiffnew}, \ref{Hip_error_boundnew} and \ref{hipotese5paraconvl} are valid 
in  $B(x^*, \delta)$ for some $\delta \in ]0,1[$, 
$rank(J(x)) = rank(J(x^*)) \geq 1$ in $B(x^*, \delta)$,  
and that the uniform completeness condition holds at $B(x^*, \delta)\setminus X^*$. 
Let $\{ x_k \}$ be generated by the LMMSS method with $\alpha_k=1$, $\lambda_k = \|J_k^TF_k\|$ for all $k$, 
and $x_0 \in B(x^*, \varepsilon)$, where $\varepsilon > 0$ is given by \eqref{eq:epsA5CR}. 
Then, $\{\dist(x_k, X^*)\} $ converges superlinearly to zero. 
Moreover, the sequence $\{x_k\}$ converges to some $\bar{x} \in X^* \cap B(x^*, \frac{\delta}{2})$.
\end{teo}
\begin{proof}
From  \eqref{eq:hip5convlocal} we obtain
\begin{equation*}
	\dfrac{\dist(x_{k+1},X^*)}{\dist(x_k,X^*)} \leq \dfrac{\tilde{C}}{\omega}\dist(x_k,X^*)^r
\end{equation*}
and since $\dist(x_k,X^*)^r \rightarrow 0$ (due to Lemmas~\ref{Lem:Auxconv1hip5}  and \ref{Lem:Aux2conv1hip5}) we have that $\{\dist(x_k, X^*)\}$ converges to zero superlinearly.

Proof of convergence of the whole sequence $\{x_k\}$ to some $\bar{x} \in X^* \cap B(x^*, \frac{\delta}{2})$ follows the same lines as in the proof of Theorem~\ref{teoconvlocahip4}. \qed 

%%%Once $\dist(x_k, X^*)$ converges to zero and $x_k \in B(x^*, \frac{\delta}{2})$ for every $k$, it remains to show that $\{ x_k \}$ converges. 
%%%From Lemma \ref{lemac1postoconst} and Lemma \ref{Lem:Auxconv1hip5}, we have
%%%\[
%%%\|d_k\| \leq c_1\text{dist}(x_k, X^*) \leq c_1\eta^k \text{dist}(x_0, X^*) \leq c_1\varepsilon \eta^k,
%%%\]
%%%for every $k \geq 1$. Thus, for any positive integers $\ell$ and $q$, with $\ell \geq q$,
%%%\[
%%%\|x_\ell - x_q\| \leq \sum_{i=q}^{\ell-1} \|d_i\| \leq \sum_{i=q}^{\infty} \|d_i\| \leq c_1\varepsilon \sum_{i=q}^{\infty} \eta^i,
%%%\]
%%%implying that $\{x_k\} \subset \mathbb{R}^n$ is a Cauchy sequence and, therefore, converges. \qed
\end{proof}

%%\begin{obs}
%%In particular, when $r=1$ in Assumption~\ref{hipotese5paraconvl}, $\dist(x_k,X^*)$ converges to zero quadratically. 
%%\end{obs}

\subsection{Diminishing Rank}\label{subsecdimirank}

In this section, we will examine the scenario where the rank of $J(x_k)$ decreases as $x_k$ approaches the set of stationary points $X^*$. 
In this case, the convergence analysis will depend not only on the upper bound for $\| J(x_k)^T F(\bar{x}_k) \|$ given in 
Assumption~\ref{a3} (or Assumption~\ref{hipotese5paraconvl}) but also on the specific choices of the LM parameter $\lambda_k$.

	\begin{lema}\label{Lem:Auxconvpostodimhip5}
Let $x^* \in X^*$, suppose that Assumptions~\ref{Hip_Jdiffnew} and \ref{Hip_error_boundnew} are valid in  $B(x^*, \delta)$ for some $\delta \in ]0,1[$, 
and that the uniform completeness condition holds at $B(x^*, \delta)\setminus X^*$. 
Additionally, assume that Assumption~\ref{hipotese5paraconvl} is verified with $r \in ]0,1]$, and $\lambda_k = \|J_k^TF_k\|^r$. 
		If $x_k, x_{k+1} \in B(x^*, \delta/2)$ and $\dist(x_k, X^*) < \varepsilon$, where
		\begin{equation*}%\label{eq:varepsilonconvhip5dimin}
			\varepsilon \leq \Big(\frac{\eta\omega }{\hat{C}}\Big)^{1/r}
		\end{equation*}
		with $\eta \in (0, 1)$, and $\hat{C} = L_7+(1+(1 + c_1)^{1+r})C$, $L_7 =L_4c_1^2 + L_5 + L_3^r\norm{L}^2c_1$, 
		then 
		$$
		\dist(x_{k+1}, X^* ) \leq \eta \, \dist(x_k, X^*). 
		$$
	\end{lema}
	
	\begin{proof}
		Note that from Assumption~\ref{hipotese5paraconvl} and Lemma~\ref{lemac1postodiminuindo}(a), for $\lambda_k= \norm{J_k^TF_k}^r$, we have $\norm{d_k}\leq c_1\dist(x_k,X^{*})$, 
		with 
		\begin{equation}\label{c1compostodiminuindohip5}
			c_1=\frac{4}{\gamma \omega^r}(L_4+C)+ \frac{\sqrt{L_2^2+\norm{L}^2}}{\sqrt{\gamma}}.
		\end{equation}
		
		From Lemma~\ref{Lem: Auxiliarconvgeral}, Assumptions~\ref{Hip_error_boundnew} and \ref{hipotese5paraconvl}, $\lambda_k=\norm{J_k^TF_k}^r$,  $\dist(x_k,X^{*}) \leq \delta/2<1/2$ and assuming that $\dist(x_k, X^{*})< \varepsilon$, we have
		\begin{equation*}
			\begin{aligned}
				\omega \dist(x_{k+1}, X^*) 
%				&\leq (L_4c_1^2 + L_5)||x_k - \bar{x}_k||^2 + \lambda_k\norm{L}^2c_1||x_k - \bar{x}_k|| + \\
%				&+ ||J(x_k)^TF(\bar{x}_k)|| + ||J(x_{k+1})^TF(\bar{x}_k)||  \\
				&\leq (L_4c_1^2 + L_5)\| x_k - \bar{x}_k\|^2 + L_3^{r}\norm{L}^2c_1\|x_k - \bar{x}_k\|^{1+r} \\ &+ C\|x_k - \bar{x}_k\|^{1+r} + C\|x_{k+1} - \bar{x}_k\|^{1+r}\\
				&\leq (L_4c_1^2 + L_5 + L_3^r\norm{L}^2c_1)\|x_k - \bar{x}_k\|^2 \\&+ (1+(1 + c_1)^{1+r})C\|x_k - \bar{x}_k\|^{1+r} \\
				&= [L_7+(1+(1 + c_1)^{1+r})C]\|x_k - \bar{x}_k\|^{1+r} \\
				&\leq [L_7+(1+(1 + c_1)^{1+r})C]\varepsilon^{r} \norm{x_k - \bar{x}_k}\\
				&:=\hat{C}\varepsilon^{r}\norm{x_k - \bar{x}_k},
			\end{aligned}
		\end{equation*}
		
		with $c_1$ as in \eqref{c1compostodiminuindohip5} and $L_7= L_4c_1^2 + L_5 + L_3^r\norm{L}^2c_1$.
		
		Therefore, for $\varepsilon\leq \Big(\frac{\eta\omega }{\hat{C}}\Big)^{1/r},$ we have $\dist(x_{k+1}, X^*) \leq \eta \, \dist(x_k, X^*)$. \qed 
	\end{proof}
	
	Using an appropriate $\varepsilon > 0$, and Lemma~\ref{Lem:Auxconvpostodimhip5} , the proofs of the next results are analogous to those of Lemma~\ref{Lem:Aux2conv1hip5} and Theorem~\ref{teoconvlocalhip5CR}, respectively.
	
	\begin{lema}\label{Lem:Auxconv2diminuindohip5}
		Suppose that the Assumptions of Lemma~\ref{Lem:Auxconvpostodimhip5} are satisfied, and consider $\varepsilon$ as
		\begin{equation}\label{eq:epsA5DIMRANK}
			\varepsilon = \min\left\{\frac{\frac{\delta}{2}}{ 1 + \frac{c_1}{1 - \eta}}, \Big(\frac{\eta\omega }{\hat{C}}\Big)^{1/r}\right\}.
		\end{equation} 
		If $x_0 \in B(x^*, \varepsilon)$, then $x_{k+1} \in B(x^*, \frac{\delta}{2})$, and $\dist(x_k, X^*) \leq \varepsilon$ for every $k \in \mathbb{N}$. 
	\end{lema}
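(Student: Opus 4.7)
The plan is to mirror the proof of Lemma~\ref{Lem:Aux2conv1hip5} almost verbatim, replacing the role of Lemma~\ref{Lem:Auxconv1hip5} (constant rank) by Lemma~\ref{Lem:Auxconvpostodimhip5} (diminishing rank) and using the constant $c_1$ from \eqref{c1compostodiminuindohip5} together with the $\varepsilon$ prescribed in \eqref{eq:epsA5DIMRANK}. The argument is an induction on $k$ that simultaneously tracks the two claims: $x_{k+1}\in B(x^*,\delta/2)$ and $\dist(x_k,X^*)\le \varepsilon$.

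For the base case $k=0$, I would observe that $\dist(x_0, X^*)\le \|x_0-x^*\|\le\varepsilon$ by hypothesis, and then bound
\[
\|x_1-x^*\|\le\|d_0\|+\|x_0-x^*\|\le c_1\dist(x_0,X^*)+\varepsilon\le (1+c_1)\varepsilon\le \Bigl(1+\frac{c_1}{1-\eta}\Bigr)\varepsilon\le \delta/2,
\]
where $\|d_0\|\le c_1\dist(x_0,X^*)$ comes from Lemma~\ref{lemac1postodiminuindo} (applicable since $x_0\in B(x^*,\varepsilon)\subset B(x^*,\delta)$ and $\lambda_0=\|J_0^TF_0\|^r$).

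For the inductive step, assuming $x_i\in B(x^*,\delta/2)$ and $\dist(x_{i-1},X^*)\le\varepsilon$ for $i=1,\dots,k$, I would apply Lemma~\ref{Lem:Auxconvpostodimhip5} successively to obtain
\[
\dist(x_i,X^*)\le \eta\,\dist(x_{i-1},X^*)\le\cdots\le \eta^{i}\dist(x_0,X^*)\le \eta^{i}\varepsilon,
\]
so in particular $\dist(x_k,X^*)\le \eta^{k}\varepsilon\le\varepsilon$. Note this uses the fact that the choice of $\varepsilon$ in \eqref{eq:epsA5DIMRANK} is bounded above by $\bigl(\eta\omega/\hat C\bigr)^{1/r}$, which is precisely the threshold \eqref{eq:varepsilonconvhip5dimin} required by Lemma~\ref{Lem:Auxconvpostodimhip5}.

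Finally, to close the induction for the ball condition, I would sum the steps with the geometric factor $\eta$:
\[
\|x_{k+1}-x^*\|\le \|x_1-x^*\|+\sum_{i=1}^{k}\|d_i\|\le (1+c_1)\varepsilon+c_1\varepsilon\sum_{i=1}^{\infty}\eta^{i}=\Bigl(1+\frac{c_1}{1-\eta}\Bigr)\varepsilon\le \frac{\delta}{2},
\]
the last inequality being exactly the other term in the minimum defining $\varepsilon$ in \eqref{eq:epsA5DIMRANK}. The only delicate point — and the one worth double-checking — is that before invoking Lemma~\ref{Lem:Auxconvpostodimhip5} at step $i$ we need both $x_i$ and $x_{i+1}$ to lie in $B(x^*,\delta/2)$; this is guaranteed by the same telescoping estimate applied at one step less, so the two claims reinforce each other throughout the induction. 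Apart from that bookkeeping, no new ideas beyond those used in Lemma~\ref{Lem:Aux2conv1hip5} are required.
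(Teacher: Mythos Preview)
Your proposal is correct and matches the paper's approach exactly: the paper explicitly states that the proof of Lemma~\ref{Lem:Auxconv2diminuindohip5} is analogous to that of Lemma~\ref{Lem:Aux2conv1hip5}, with Lemma~\ref{Lem:Auxconvpostodimhip5} playing the role of Lemma~\ref{Lem:Auxconv1hip5} and the constant $c_1$ from \eqref{c1compostodiminuindohip5}. The induction you outline, including the telescoping estimate and the bookkeeping about having both $x_i$ and $x_{i+1}$ in $B(x^*,\delta/2)$ before applying Lemma~\ref{Lem:Auxconvpostodimhip5}, reproduces the argument faithfully.
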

	
	\begin{teo}\label{teoconvlocalhip5rankdim}
Let $x^* \in X^*$, suppose that Assumptions~\ref{Hip_Jdiffnew}, \ref{Hip_error_boundnew} and \ref{hipotese5paraconvl} are valid 
in  $B(x^*, \delta)$ for some $\delta \in ]0,1[$ and $r \in ]0,1]$,    
and that the uniform completeness condition holds at $B(x^*, \delta)\setminus X^*$. 
Let $\{ x_k \}$ be generated by the LMMSS method with $\alpha_k=1$, $\lambda_k = \|J_k^TF_k\|^r$ for all $k$, 
and $x_0 \in B(x^*, \varepsilon)$, where $\varepsilon > 0$ is given by \eqref{eq:epsA5DIMRANK}. 
Then, $\{\dist(x_k, X^*)\} $ converges superlinearly to zero. 
Moreover, the sequence $\{x_k\}$ converges to some $\bar{x} \in X^* \cap B(x^*, \frac{\delta}{2})$.	
	\end{teo}
	
	\begin{obs}
		The main difference from Theorem~\ref{teoconvlocalhip5CR} is that when Assumption~\ref{hipotese5paraconvl} holds with $0 < r < 1$ 
		we cannot choose $\lambda_k = \| J_k^T F_k \|$ anymore;  $\lambda_k = \| J_k^T F_k \|^r$ is essential for the convergence proof in this case. 
		This becomes clear from the proof of Lemma~\ref{lemac1postodiminuindo}(a); see inequality~\eqref{eq:dkGSVDnorm2}. In fact, we could choose $\lambda_k=\| J_k^T F_k \|^{r'}$ with $r'\leq r$ that all analysis remains valid with minor modifications.
	\end{obs}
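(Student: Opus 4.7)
The plan is to revisit inequality~\eqref{eq:dkGSVDnorm2} from the proof of Lemma~\ref{lemac1postodiminuindo} and pinpoint the two $1/\lambda_k$ factors on its right-hand side as the only places where the specific choice of $\lambda_k$ enters. First, I would rewrite that inequality under the generic selection $\lambda_k = \norm{J_k^T F_k}^{r'}$ with $r' \in (0,1]$. Using Assumption~\ref{hipotese5paraconvl} together with $J(\bar x_k)^T F(\bar x_k) = 0$ to control the ``cross term'' by $\norm{J_k^T F(\bar x_k)} \leq C \norm{x_k - \bar x_k}^{1+r}$, it becomes
\begin{equation*}
\norm{d_k} \leq \tfrac{L_4}{\gamma \lambda_k} \norm{x_k - \bar x_k}^2 + \tfrac{C}{\gamma \lambda_k} \norm{x_k - \bar x_k}^{1+r} + \tfrac{\sqrt{L_2^2 + \norm{L}^2}}{\sqrt{\gamma}} \norm{x_k - \bar x_k}.
\end{equation*}

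Next, I would invoke the error bound \eqref{eq:associationerrorwithL3}, which yields $\lambda_k \geq \omega^{r'} \norm{x_k - \bar x_k}^{r'}$, and substitute to rewrite the two sensitive terms as $O(\norm{x_k - \bar x_k}^{2 - r'})$ and $O(\norm{x_k - \bar x_k}^{1 + r - r'})$, respectively. For the whole right-hand side to be dominated by a uniform multiple of $\norm{x_k - \bar x_k}$ — which is the content of the conclusion $\norm{d_k} \leq c_1 \dist(x_k, X^*)$ of Lemma~\ref{lemac1postodiminuindo} — both exponents must be at least $1$ in a neighborhood of $x^*$. The first, $2 - r' \geq 1$, is automatic because $r' \leq 1$; the second, $1 + r - r' \geq 1$, reduces precisely to $r' \leq r$. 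This gives the last sentence of the observation: any choice $\lambda_k = \norm{J_k^T F_k}^{r'}$ with $r' \leq r$ preserves the whole analysis with only cosmetic modifications.

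Specializing to the forbidden case $r' = 1$ (i.e., $\lambda_k = \norm{J_k^T F_k}$) while $r < 1$, the exponent in the middle term drops to $r < 1$, so the upper bound produced by this chain of inequalities for $\norm{d_k}/\norm{x_k - \bar x_k}$ carries a factor $\norm{x_k - \bar x_k}^{r - 1}$, which diverges as $x_k \to x^*$. Hence no uniform constant $c_1$ can be extracted, and the remaining derivations — Lemma~\ref{Lem: Auxiliarconvgeral}, Lemma~\ref{Lem:Auxconvpostodimhip5} and ultimately Theorem~\ref{teoconvlocalhip5rankdim} — all of which invoke $\norm{d_k} \leq c_1 \dist(x_k, X^*)$, cannot be carried through.

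The only subtle point I would flag is that this argument is about \emph{the present proof chain}, not about the method itself diverging: what the observation is really claiming is that the exponent $r$ built into Assumption~\ref{hipotese5paraconvl} is precisely what the GSVD-plus-error-bound estimate of Lemma~\ref{lemac1postodiminuindo} can absorb, and raising the exponent in $\lambda_k$ above $r$ destroys that cancellation in \eqref{eq:dkGSVDnorm2}. Framing the justification at the level of the exponent arithmetic above makes the essentiality of $\lambda_k = \norm{J_k^T F_k}^r$ immediate and transparent.
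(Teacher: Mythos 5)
Your proposal is correct and follows exactly the reasoning the paper intends: the remark is justified precisely by combining \eqref{eq:dkGSVDnorm2} with the error bound $\lambda_k = \norm{J_k^TF_k}^{r'} \geq \omega^{r'}\norm{x_k-\bar{x}_k}^{r'}$ and Assumption~\ref{hipotese5paraconvl}, and your exponent arithmetic ($2-r'\geq 1$ automatically, $1+r-r'\geq 1$ iff $r'\leq r$) is the same computation that produces the constant $c_1$ in \eqref{c1compostodiminuindohip5}. Your closing caveat, that this shows the failure of the proof chain rather than of the method itself, is an accurate reading of what the remark claims.
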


	%%%%%%%%% POSTO DECRESCENTE E HIPOTESE FRACA
To close this section, we analyze local convergence of LMMSS when the Jacobian rank is not constant and Assumption~\ref{a3} holds. 
	
	\begin{lema}\label{lem:Auxconv1a3}
Let $x^* \in X^*$, suppose that Assumptions~\ref{Hip_Jdiffnew} and \ref{Hip_error_boundnew} are valid in  $B(x^*, \delta)$ for some $\delta \in ]0,1[$ 
and that the uniform completeness condition holds at $B(x^*, \delta)\setminus X^*$. 	
    Also, assume that Assumption~\ref{a3} is verified with 
    \[
    \sigma < \bar{\sigma} := \min \left\{ (\sigma^*_{\min})^2, \frac{\omega}{(2 + (1 + \theta \|L\|^2))c_1} \right\},
    \]
    and $\lambda_k$, $\theta$ and $\bar{\delta}$ are those in Lemma~\ref{lemac1postodiminuindo}(b).
    If $x_k, x_{k+1} \in B(x^*, \bar{\delta}/2)$, 
    and $\dist(x_k, X^*) < \varepsilon$, where
    \begin{equation*}%\label{eq:varepsilonconva3}
        \varepsilon \leq \frac{\eta\omega - (2 + (1 + \theta \|L\|^2) c_1)\sigma}{L_8}, 
    \end{equation*}
    with $\eta \in \left] \frac{ (2 + (1 + \theta \|L\|^2) c_1)\sigma}{\omega}, 1 \right[$ and $L_8=L_4(c_1^2 + \theta c_1 \| L \|^2) + L_5$, 
    then 
    \[
    \dist(x_{k+1}, X^*) \leq \eta \, \dist(x_k, X^*).
    \]
\end{lema}
\begin{proof}
%From Assumption~\ref{a3}, \eqref{eq:limitacaoomega} and \eqref{eqhipL3new}, we have

		From Assumption~\ref{a3} and Lemma~\ref{lemac1postodiminuindo}(b), for $\lambda_k$ as in \eqref{eq:lambda3},  
		we have $\norm{d_k}\leq c_1\dist(x_k,X^{*})$, 
		with 
		\begin{equation}\label{c1compostodiminuindoa3}
 		c_1 = (1/\gamma)\max \{ \kappa, (\sigma_{\min}^*)^2/\theta \}  + \sqrt{L_2^2+\norm{L}^2}/\sqrt{\gamma}.
%			c_1 = \Theta(L_4 + \sigma)/\gamma  + \sqrt{L_2^2+\norm{L}^2}/\sqrt{\gamma}.
		\end{equation}
		
		From Lemma~\ref{Lem: Auxiliarconvgeral}, Assumptions~\ref{Hip_error_boundnew} and \ref{a3},  $\dist(x_k,X^{*}) \leq \delta/2<1/2$ and assuming that $\dist(x_k, X^{*})< \varepsilon$, we have
    \begin{equation*}
        \begin{aligned}
            \omega \dist(x_{k+1}, X^*) 
%            &\leq (L_4c_1^2 + L_5)||x_k - \bar{x}_k||^2 + \lambda_k \norm{L}^2c_1||x_k - \bar{x}_k|| \\&+ ||J(x_k)^TF(\bar{x}_k)|| + ||J(x_{k+1})^TF(\bar{x}_k)||  \\
            &\leq (L_4c_1^2 + L_5)\| x_k - \bar{x}_k\|^2 + \lambda_k \norm{L}^2c_1||x_k - \bar{x}_k||\\
            &+ \sigma\|x_k - \bar{x}_k\| + \sigma\|x_{k+1} - \bar{x}_k\|\\
            &\leq (L_4(c_1^2 + \theta c_1 \| L \|^2) + L_5 )\|x_k - \bar{x}_k\|^2 +  (2+ (1 + \theta \|L\|^2)c_1)\sigma\|x_k - \bar{x}_k\|  \\
			&\leq \Big[ (L_4(c_1^2 + \theta c_1 \| L \|^2) + L_5 )\varepsilon  +  (2+ (1 + \theta \|L\|^2)c_1)\sigma  \Big] \|x_k - \bar{x}_k\|.
        \end{aligned}
    \end{equation*}
    Now, for $\sigma < \frac{\omega}{(2 + (1 + \theta \|L\|^2))c_1}$, for any $\eta \in \left] \frac{(2 + (1 + \theta \|L\|^2))\sigma}{\omega}, 1 \right[$, 
    we have $\eta\omega - (2 + (1 + \theta \|L\|^2) c_1)\sigma > 0$. 
    Denoting $L_8 = L_4(c_1^2 + \theta c_1 \| L \|^2) + L_5$, for $\varepsilon \leq \frac{\eta\omega - (2 + (1 + \theta \|L\|^2) c_1)\sigma}{L_8}$, 
    we obtain $\dist(x_{k+1}, X^*) \leq \eta \dist(x_k, X^*)$.  \qed 
\end{proof}

	Again, with the aid of Lemma~\ref{lem:Auxconv1a3}, the proofs of the next results are analogous to those 
	of Lemma~\ref{Lem:Auxconvpostodimhip5} and Theorem~\ref{teoconvlocalhip5rankdim}, respectively.
	
	\begin{lema}\label{Lem:Auxconv2diminuindoA3}
		Suppose that the Assumptions of Lemma~\ref{lem:Auxconv1a3} are satisfied in $B(x^*, \delta)$, 
		consider $\bar{\delta}$ from Lemma~\ref{lemac1postodiminuindo}(b) and 
		\begin{equation}\label{eq:epsA3DIMRANK}
			\varepsilon = \min\left\{\frac{\frac{\bar{\delta}}{2}}{ 1 + \frac{c_1}{1 - \eta}}, \frac{\eta\omega - (2 + (1 + \theta \|L\|^2) c_1)\sigma}{L_8} \right\}.
		\end{equation} 
		If $x_0 \in B(x^*, \varepsilon)$, then $x_{k+1} \in B(x^*, \frac{\bar{\delta}}{2})$, and $\dist(x_k, X^*) \leq \varepsilon$ for every $k \in \mathbb{N}$. 
	\end{lema}
	
	\begin{teo}\label{teoconvlocalA3rankdim}
Let $x^* \in X^*$, suppose that the assumptions of Lemma~\ref{lem:Auxconv1a3} hold in $B(x^*, \delta)$. 
		Let $\{x_k\}$ be generated by LMMSS with $\alpha_k=1$, 
		$\lambda_k$ as in \eqref{eq:lambda3}, for all $k$, $\theta, \bar{\delta}$ from Lemma~\ref{lemac1postodiminuindo}(b). 
		If $x_0 \in B(x^*, \varepsilon)$, where $\varepsilon > 0$ as in \eqref{eq:epsA3DIMRANK},
		 then $\{dist(x_k,X^{*})\}$ converges linearly to zero. Furthermore, the sequence $\{x_k\}$ converges to the solution $\bar{x} \in X^{*}\cap  B(x^*, \bar{\delta}/2)$.
	\end{teo}
	
	\begin{obs}
		In the results under Assumption~\ref{a3}  that led to Theorem~\ref{teoconvlocalA3rankdim}, 
		we considered that the Jacobian rank may change in a neighborhood of $X^*$. 
		In this case, we emphasize that, in order to apply Lemma~\ref{lemac1postodiminuindo}(b), 
		$\lambda_k$ must follow the regime defined by \eqref{eq:lambda3}, 
		which implies $\sigma \leq \lambda_k \leq \theta L_4 \delta + \theta \sigma$. 
		Thus, in this setting, $\lambda_k$ must stays bounded away from zero, 
		as opposed to the previous results in this work. 
	\end{obs}
	
	%%%%%%%%	%%%%%%%%	%%%%%%%%	%%%%%%%%	%%%%%%%%	%%%%%%%%

	\section{Global convergence}\label{sec:global}
	Now we consider Algorithm~\ref{alg:LMM_armijo}, a version of LMMSS globalized with line-search for nonzero residual nonlinear least-squares. 
	It is worth to point out that Algorithm~\ref{alg:LMM_armijo} differs from the algorithm proposed in \cite{Boos2024} in:
	\begin{itemize}
	\item the choice of the LM parameter $\lambda_k$,
	\item  the full step ($\alpha_k=1$) acceptance criterion, 
	\item a LMM direction as safeguard when LMMSS direction fails to satisfy sufficient descent conditions. 
	\end{itemize}

	\LinesNumberedHidden{
		\begin{algorithm}
			\caption{Globalized LMMSS}\label{alg:LMM_armijo}
			\textbf{Input:} $\nu, \zeta, \vartheta, \xi \in ]0,1[$, $M > 0$, $F$, $J$, $L$, $\lambda_0 > 0$ and $x_0 \in \Rn$. \\
			Set $k=0$. 
			\begin{enumerate}
				\item If $\| \nabla \phi(x_k) \| = 0$, stop and return $\bar{x} = x_{k}$. 
				\item Choose $\lambda_k>0$ and compute $d_k$, a solution of  $(J_k^T J_k + \lambda_k L^T L) d = - J_k^T F_k$. 
				\item If $\norm{\nabla \phi (x_k+ d_k)} \leq \vartheta \norm{\nabla \phi (x_k)}$, set $\alpha_k=1$ and go to Step~6. 
				\item If $\|d_k\| > M$ or $-\nabla \phi(x_k)^T d_k  < \xi \| \nabla \phi(x_k)\|^2$, 
				set 
				$$
				d_k = -(J_k^T J_k + \lambda_k I)^{-1} J_k^T F_k.
				$$
				\item 			Choose $m$ as the smallest nonnegative integer such that 
				\begin{equation*}%\label{armijo}
					\phi (x_k+ \zeta ^m{d}_k) - \phi ({x}_k) \leq \nu \zeta^m \nabla \phi ({x}_k)^T {d}_k
				\end{equation*} 
				Set $\alpha_k = \zeta^m$. 
				\item Update ${x}_{k+1} = {x}_k+ \alpha_k {d}_k$,  $k \leftarrow k+1$, \\and go to Step 1.
			\end{enumerate}
	\end{algorithm}}
	
		As stated, Algorithm~\ref{alg:LMM_armijo} leaves open the choice of the sequence of positive LM parameters $\{ \lambda_k \}$. 
		Except for the very last theorem of this section, all the results that follow hold for the choices $\lambda_k = \|J_k^T F_k\|^r, r \in ]0,1]$ 
		or $\lambda_k \in [ \underline{\lambda}, \bar{\lambda} ]$,  $0 < \underline{\lambda}  \leq \bar{\lambda} < +\infty$. %where $\underline{\lambda}, \bar{\lambda}$ are not necessarily those of the Section~\ref{sec:local}. 
		
		We remark that the linear system in Step~2 always has a solution, independently whether condition~\eqref{comp-condition} holds at $x_k$. 
		This is because such linear system is of the form $B^T B d = B^T c$, where $B^T = [J_k^T \ \sqrt{\lambda_k}L^T ]$ and $c^T = [F_k \ 0 ]$.
		
		Initially, we will demonstrate that the direction sequence $\{d_k\}$ generated by Algorithm~\ref{alg:LMM_armijo} is gradient-related to $\{x_k\}$ (see \cite[Eq(1.14)]{Bertsekas2016}). Subsequently, we will establish that any limit point of the sequence produced by this algorithm is a stationary point for \eqref{prob1}, regardless of the initial point. 
		
%%%		In this section, we assume that Assumption~\ref{hipotese5paraconvl} is satisfied with $r=1$ to simplify notation and proofs. 

		We recall the definition of gradient-related directions from \cite[Eq(1.14)]{Bertsekas2016}.
		
		\begin{defi}[Gradient-related]\label{def_grad_related}
			Let $\{x_k \}$ and $\{d_k\}$ be sequences in $\R^n$ and $\phi: \Rn \to \R$ a continuously differentiable function. 
			The sequence $\{d_k\}$ is said to be gradient-related to $\{ x_k \}$ if, for each subsequence $\{x_k\}_{k\in \mathcal{K}}$ (with $\mathcal{K} \subseteq \mathbb{N}$) converging to a nonstationary point of $\phi$, the corresponding subsequence $\{d_k\}_{k\in \mathcal{K}}$ is bounded and satisfies 
			\begin{equation*}%\label{liminf_dk}
				\limsup_{k \rightarrow \infty, \ k\in \mathcal{K}} \nabla \phi (x_k)^T d_k < 0.
			\end{equation*}
		\end{defi}
		
				\begin{obs}\label{obs:sufficient}  
				Let $\{d_k\}$, $\{x_k\}$ be sequences generated by an iterative algorithm $x_{k+1} = x_k + \alpha_k d_k$, 
				for minimizing $\phi(x)$, where $\alpha_k$ fulfills Armijo condition. 
				It is not hard to prove that if  $\{d_k\}$, $\{x_k\}$ satisfy 
				\begin{equation*}%\label{eq:cond-suf}
				\| d_k \| \leq \bar{M} \quad \text{and} \quad \nabla \phi(x_k)^T d_k \leq -\bar{\xi} \| \nabla \phi(x_k) \|^{p_1}, 
				\end{equation*}
				for $\bar{M} > 0$, $p_1 > 0$ and $\bar{\xi} \in ]0,1[$, then $\{d_k\}$ is gradient-related. 
				\end{obs}
		
		\begin{prop}\label{prop_dk}  
			Let $\{d_k\}$, $\{x_k\}$ be sequences generated by Algorithm~\ref{alg:LMM_armijo} starting from $x_0 \in \Rn$. 
			Suppose that condition~\eqref{comp-condition} 	holds in the level set ${\cal L}_0 = \{ x \in \Rn \mid \phi(x) \leq \phi(x_0)\}$, 
			and that either $\lambda_k = \| J_k^T F_k \|^r$ or $0 < \underline{\lambda} \leq \lambda_k \leq \bar{\lambda}$. 
			Then, $\{d_k\}$ is \textup{gradient-related}.
		\end{prop}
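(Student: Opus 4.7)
The plan is to verify the two defining properties of gradient-relatedness in Definition~\ref{def_grad_related} along any subsequence $\{x_k\}_{k\in\mathcal{K}}$ that converges to some $\bar x$ with $\nabla\phi(\bar x)\neq 0$. First I would collect the ambient bounds along $\mathcal{K}$. By continuity of $\nabla\phi$ and $J$, and since $\nabla\phi(\bar x)\neq 0$, there exist constants $0<g_1\leq g_2$ and $\bar L>0$ such that, for all $k\in\mathcal{K}$ large enough,
\begin{equation*}
g_1\leq \|\nabla\phi(x_k)\|\leq g_2, \qquad \|J_k\|\leq \bar L.
\end{equation*}
Since Algorithm~\ref{alg:LMM_armijo} updates $\lambda_k=\|\nabla\phi(x_k)\|$, the same bounds hold for $\lambda_k$; in particular $\min(1,\lambda_k)\geq \min(1,g_1)>0$. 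Because the iterates remain in $\mathcal{L}_0\subseteq\Omega$, Assumption~\ref{Hip_nullJLnew} is valid at every such $x_k$.

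Next I would split $\mathcal{K}$ into $\mathcal{K}_1$ (iterations where the safeguard in Step~4 is \emph{not} triggered, so $d_k$ is the original LMMSS direction) and $\mathcal{K}_2$ (iterations where $d_k=\tilde d_k:=-(J_k^TJ_k+\lambda_k I)^{-1}J_k^TF_k$). For $k\in\mathcal{K}_1$, the completeness condition yields
\begin{equation*}
v^T(J_k^TJ_k+\lambda_k L^TL)v \;=\; \|J_k v\|^2+\lambda_k\|Lv\|^2 \;\geq\; \gamma\min(1,\lambda_k)\,\|v\|^2.
\end{equation*}
Applying this to $v=d_k$ together with the normal equation $(J_k^TJ_k+\lambda_k L^TL)d_k=-\nabla\phi(x_k)$ gives $-\nabla\phi(x_k)^Td_k=\|J_kd_k\|^2+\lambda_k\|Ld_k\|^2\geq \gamma\min(1,\lambda_k)\|d_k\|^2$, and Cauchy--Schwarz then produces the uniform bound $\|d_k\|\leq \|\nabla\phi(x_k)\|/(\gamma\min(1,\lambda_k))\leq g_2/(\gamma\min(1,g_1))$. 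A matching lower bound comes from $\|d_k\|\geq \|\nabla\phi(x_k)\|/\|J_k^TJ_k+\lambda_k L^TL\|\geq g_1/(\bar L^2+g_2\|L\|^2)$, so there exists $c_1>0$ with $-\nabla\phi(x_k)^Td_k\geq c_1$ for every sufficiently large $k\in\mathcal{K}_1$.

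For $k\in\mathcal{K}_2$ the matrix $J_k^TJ_k+\lambda_k I$ is positive definite with smallest eigenvalue $\lambda_k$, so $\|\tilde d_k\|\leq \|\nabla\phi(x_k)\|/\lambda_k=1$, and $-\nabla\phi(x_k)^T\tilde d_k\geq \|\nabla\phi(x_k)\|^2/(\bar L^2+\lambda_k)\geq g_1^2/(\bar L^2+g_2)=:c_2>0$. Combining both sub-subsequences, $\{d_k\}_{k\in\mathcal{K}}$ is uniformly bounded and $\nabla\phi(x_k)^Td_k\leq -\min(c_1,c_2)$ for all large $k\in\mathcal{K}$, which establishes both requirements of Definition~\ref{def_grad_related}. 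I expect the main obstacle to be purely organizational: the safeguard in Step~4 forces a case analysis, and one must carefully track that the relevant positive-definiteness constants depend on $\lambda_k$ only through $\min(1,\lambda_k)$, which is bounded below precisely because $\lambda_k=\|\nabla\phi(x_k)\|\to\|\nabla\phi(\bar x)\|>0$ along $\mathcal{K}$.
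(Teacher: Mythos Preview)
Your argument is correct. The main difference from the paper's proof is methodological rather than substantive. The paper establishes the angle inequality $\nabla\phi(x_k)^Td_k\le -\gamma\min\{1,\underline\lambda\}\|d_k\|^2$ by rewriting $J_k^TJ_k+\lambda_kL^TL$ through the GSVD of $(J_k,L)$ and invoking Ostrowski's inertia law together with Lemma~\ref{lem:normXk}; for the boundedness of $d_k$ it uses the explicit GSVD expression \eqref{eq:dkGSVD} and the spectral estimate of Lemma~\ref{lemma_psi}. You bypass all of this by applying Assumption~\ref{Hip_nullJLnew} directly: the one-line observation $\|J_kv\|^2+\lambda_k\|Lv\|^2\ge\min(1,\lambda_k)(\|J_kv\|^2+\|Lv\|^2)\ge\gamma\min(1,\lambda_k)\|v\|^2$ gives the same angle bound without any decomposition, and Cauchy--Schwarz plus an operator-norm estimate replace the GSVD-based bound on $\|d_k\|$. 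This is more elementary and self-contained.

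A second difference concerns the safeguard. The paper's proof concludes by arguing that the LMMSS direction already satisfies $\|d_k\|\le M$ and $-\nabla\phi(x_k)^Td_k\ge\theta\|\nabla\phi(x_k)\|^2$ for the specific values $M=M_2$ and $\theta=\gamma\min\{1,\underline\lambda\}/M_1^2$, so that Step~4 is never triggered along $\mathcal K$; the safeguard case is then treated separately in Remark~\ref{obs:dLM}. Your explicit split $\mathcal K=\mathcal K_1\cup\mathcal K_2$ handles both possibilities inside a single argument and does not tie the conclusion to any particular choice of the input parameters $M,\theta$, which is arguably cleaner. The trade-off is that the paper's GSVD route yields finer spectral information (on $\Gamma_k$) that is reused elsewhere in the analysis, whereas your direct estimates are tailored to this proposition alone.
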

		
		\begin{proof}
			Suppose that Algorithm~\ref{alg:LMM_armijo} generates an infinite sequence $\{x_k\}$, i.e., $\| \nabla \phi(x_k) \| = \| J_k^T F_k \| > 0$,  for each $k$. 
			Then, $\norm{F(x_k)}>0$ in this sequence. 
			Let $\{x_k\}_{k \in \mathcal{K}}$, $\mathcal{K} \subseteq \mathbb{N}$, be a subsequence converging to $\hat{x}$, 
			a nonstationary point for $\phi$, i.e., $\nabla \phi (\hat{x}) \neq 0$.
			From now on, consider $k\in \mathcal{K}$. 
			From \eqref{JmuL} and the definition of $d_k$, it follows that
			\begin{align}
				\nabla \phi (x_k)^T d_k & = - d_k^T(J_k^T J_k + \lambda_k L^T L) d_k \nonumber \\
				& = -d_k^T X_k^{-T} \left[ \begin{array}{cc}
					\Sigma_k^2 + \lambda_k M_k^2 & 0 \nonumber \\
					0 & I_{n-p} \end{array}  \right] X_k^{-1} d_k \nonumber \\
				& :=  -d_k^T X_k^{-T} \Lambda_k X_k^{-1} d_k. \label{eq:gtd1}
			\end{align}
			
			By Ostrowski's inertia law \cite[Theorem~1]{ostrowski}, we have
			\[
			\lambda_n ( X_k^{-T} \Lambda_k X_k^{-1} ) = \sigma_n^2(X_k^{-1}) \lambda_n (\Lambda_k),
			\]
			where, with an abuse of notation, $\lambda_n(B)$ and $\sigma_n(B)$ represent the smallest eigenvalue and the smallest singular value of a matrix $B$, respectively.
			Then, from Lemma~\ref{lem:normXk} it follows that
			\begin{equation}\label{eq:gtd2}
			\lambda_n ( X_k^{-T} \Lambda_k X_k^{-1} ) \geq \gamma \lambda_n (\Lambda_k) \geq \gamma \min \{ 1, \lambda_k \},			
			\end{equation}
			where, in the last inequality, we use the fact that $\sigma_{i,k}^2 + \mu_{i,k}^2 = 1$.
%			Since $\hat{x}$ is nonstationary, $\norm{J_k^TF_k}>0$, for $k \in {\cal K}$. 
			
			If $\lambda_k$ is chosen such that $\lambda_k \geq \underline{\lambda} > 0$, from \eqref{eq:gtd1} and \eqref{eq:gtd2} we obtain
			\begin{equation}\label{eq:angle}
				\nabla \phi (x_k)^T d_k \leq - \gamma \min \{1,\underline{\lambda} \} \|d_k\|^2. 
			\end{equation}

			If $\lambda_k= \norm{J_k^TF_k}^r$, \eqref{eq:angle} also holds, because $\norm{J_k^TF_k}>0$, for $k \in {\cal K}$, 
			and $\hat{x}$ is nonstationary, which implies the existence of 
			 $\underline{\lambda} > 0$ such that for $k \in {\cal K}$, $\lambda_k = \| J_k^T F_k \| \geq \underline{\lambda}$.
			
			On the other hand,
			\begin{align*}
				\| \nabla \phi(x_k) \| & = \| (J_k^T J_k + \lambda_k L^T L) d_k \| \leq \| J_k^T J_k + \lambda_k L^T L \| \| d_k \| \\
				& \leq \left( \|J_k\|^2 + \lambda_k\| L \|^2 \right) \| d_k \|. \\
			\end{align*}
			
			By the continuity of $F(x)$ and $J(x)$ and the convergence of the subsequence $\{x_k\}_\mathcal{K}$, there exist $L_J > 0$ and $M_0>0$ such that $\|J(x_k)^TF(x_k)\| \leq M_0$ and $\norm{J(x_k)} \leq L_J$ for each $k \in {\cal K}$. 
			
			Thus, if either $0 < \underline{\lambda} \leq \lambda_k \leq \bar{\lambda}$ or $\lambda_k=\norm{J_k^TF_k}^r \leq M_0^r =: \bar{\lambda}$, we have 
			\begin{equation}\label{eq:prop}
				\| \nabla \phi(x_k) \| \leq \left( L_J^2 + \bar{\lambda} \| L \|^2 \right) \| d_k \| =: M_1\|d_k\|, \quad \forall k \in {\cal K}.
			\end{equation}
			
			From \eqref{eq:prop} and \eqref{eq:angle}, we obtain
			\[
			\nabla \phi (x_k)^T d_k \leq - \frac{\gamma \min \{1,\underline{\lambda} \}}{M_1^2} \| \nabla \phi (x_k) \|^2, \quad \forall k \in {\cal K},
			\]
			which implies that
			\[
			\limsup_{k \rightarrow \infty, \ k\in \mathcal{K}} \nabla \phi (x_k)^T d_k \leq - \frac{\gamma \min \{1,\underline{\lambda} \}}{M_1^2} \| \nabla \phi (\hat{x}) \|^2 < 0.
			\]
			
			Now we just need to show that $\{d_k\}_{k\in \mathcal{K}}$ is bounded. 

			Recall from \eqref{eq:dkGSVD} that 
			\begin{equation*}%\label{est_prop_dk}
				\|d_k\| \leq \|X_k\|^2 \max \{ \|\Gamma_k \|, 1 \} \|J_k^TF_k\|, \quad \forall k\geq 0,
			\end{equation*}
			with $\Gamma_k = (\Sigma_k^2 + \lambda_k M_k^2)^{-1}$.
			
			From Lemma~\ref{lem:normXk}, we have $\|X_k\| \leq 1/\sqrt{\gamma}$. Now let us analyze the term $\max \{ \|\Gamma_k \|, 1 \} \| J_k^T F_k \| $.
			
			From Lemma~\ref{lemma_psi}, it follows that
			\begin{equation*}%\label{norm_Gamma_k}
				\|\Gamma_k\| \leq \dfrac{1}{ \lambda_k}, \quad \text{if} \quad 0 < \lambda_k < 1, \quad \text{and} \quad \|\Gamma_k\| \leq 1, \quad \text{if} \quad \lambda_k \geq 1.
			\end{equation*}
			
			We separate into two cases to analyze such upper bound. 
			\begin{itemize}
				\item[(a)] If $0 < \lambda_k < 1$, then
				\begin{equation*}
					\max \{ \|\Gamma_k \|, 1 \} \leq \dfrac{1}{ \lambda_k}.
				\end{equation*}
				If $\lambda_k = \|J_k^TF_k\|^r$, we have 
				%But we already seen that for $k \in {\cal K}$, there exist $\underline{\lambda}>0$ such that $\lambda_k \geq \underline{\lambda}$, thus   
				\[
					\max \{ \|\Gamma_k \|, 1 \} \|J_k^TF_k\| \leq \dfrac{1}{ \lambda_k} \|J_k^TF_k\| = \dfrac{1}{ \|J_k^TF_k\|^r} \|J_k^TF_k\|=\|J_k^TF_k\|^{1-r}.
				\]
				If $\lambda_k \geq \underline{\lambda} > 0$, we obtain 
				\[
					\max \{ \|\Gamma_k \|, 1 \} \|J_k^TF_k\| \leq \dfrac{1}{ \lambda_k} \|J_k^TF_k\| = \dfrac{1}{ \underline{\lambda}} \|J_k^TF_k\|.
				\]			
				In both cases 	the right hand side is bounded by $\max \left\{ M_0^{1-r}, M_0/\underline{\lambda} \right\}$.
				
				\item[(b)] If $\lambda_k \geq 1$, then $\max \{ \|\Gamma_k \|, 1 \} \leq 1$. Thus,
				\begin{equation*}
					\max \{ \|\Gamma_k \|, 1 \} \|J_k^TF_k\| \leq \|J_k^TF_k\| \leq M_0, \quad \forall k \in {\cal K}.
				\end{equation*}
			\end{itemize}
			Therefore, from (a) and (b), we conclude that 
			\begin{equation*}
				\max \{ \|\Gamma_k \|, 1 \} \|J_k^TF_k\| \leq \max \{ M_0^{1-r}, M_0/\underline{\lambda}, M_0\} =: \bar{M}_0, \quad \forall k \in {\cal K}.
			\end{equation*}
			and
			\begin{equation*}%\label{norm_d_k1}
				\|d_k\| \leq \frac{1}{\gamma} \max \{ \bar{M}_0, M_0\} =: M_2, \quad \forall k \in {\cal K}.
			\end{equation*}
			Thus, the proof is complete. \qed 
		\end{proof}

%%%					Inequalities \eqref{eq:angle} and \eqref{norm_d_k1} show that for $k \in {\cal K}$, $d_k$ is the one determined in Step 2 of Algorithm~\ref{alg:LMM_armijo} because it passes the seize of Step~4 for $M=M_2$ and $\theta = \gamma \min \{1, \underline{\lambda}\}/M_1^2$.

			The above proposition shows that \emph{directions $d_k$ generated at Step 2} of Algorithm~\ref{alg:LMM_armijo} are gradient-related, 
			for both choices $\lambda_k = \| J_k^T F_k \|^r$, or $0 < \underline{\lambda} \leq \lambda_k \leq \bar{\lambda}$, 
			as long as the completeness condition~\eqref{comp-condition} holds at every iterate. 
			Nevertheless, apart from special cases, e.g when the Jacobian $J(x)$ is full-rank everywhere, 
			we cannot ensure that such condition holds at every $x_k$ generated by Algorithm~\ref{alg:LMM_armijo} 
		 starting at an arbitrary $x_0 \in \Rn$. 			
			Hence, we safeguard the algorithm by using the classic LMM direction whenever the LMMSS direction fails to fulfill sufficient conditions for gradient-related property, namely, those of Remark~\ref{obs:sufficient}. 
			%$\| d_k \| \leq M$, for $M>0$ and $\nabla \phi (x_k)^T d_k \leq -\theta \|\nabla \phi (x_k) \|^2$ , for  $ \theta \in ]0,1[$.
			
			If such conditions are not satisfied, we take $d_k = -(J_k^T J_k + \lambda_k I)^{-1} J_k^T F_k$ which yields gradient-related directions. In fact, from
			\[
			(J_k^T J_k + \lambda_k I) d_k = - \nabla \phi(x_k),
			\]
			we obtain
			\[
			- \nabla \phi(x_k)^T d_k = \| J_k d_k \|^2 + \lambda_k \| d_k \|^2 \geq  \lambda_k \| d_k \|^2,
			\]
			and since
			\[
			\| \nabla \phi(x_k) \| = \| (J_k^T J_k + \lambda_k I) d_k \| \leq \| J_k^T J_k + \lambda_k I\| \| d_k \| \leq M_3 \| d_k \|,
			\]
			where $M_3 = L_J^2 + \bar{\lambda}$. We conclude that
			\[
			- \nabla \phi(x_k)^T d_k \geq  \lambda_k \| d_k \|^2 \geq \frac{ \lambda_k }{M_3^2} \| \nabla \phi(x_k) \|^2 =  \frac{ 1 }{M_3^2} \| \nabla \phi(x_k) \|^{2+r}
			\]
			if we take $\lambda_k = \| \nabla \phi(x_k) \|^r$, or $- \nabla \phi(x_k)^T d_k \geq \frac{ \underline{\lambda} }{M_3^2} \| \nabla \phi(x_k) \|^2$, 
			if $\lambda_k \geq \underline{\lambda} > 0$. 
			Moreover, 
\begin{align*}
			\| d_k \| = \| (J_k^T J_k + \lambda_k I)^{-1} \nabla \phi(x_k) \| & \leq \| (J_k^T J_k + \lambda_k I)^{-1} \| \| \nabla \phi(x_k) \| \\
			\ & \leq \frac{1}{\lambda_k} \| \nabla \phi(x_k) \| = \| \nabla \phi(x_k) \|^{1-r},
\end{align*}
if $\lambda_k = \| \nabla \phi(x_k) \|^r$, or $\| d_k \| \leq \| \nabla \phi(x_k) \| / \underline{\lambda}$ if $\lambda_k \geq \underline{\lambda} > 0$. 
In both cases the right rand sides are bounded in the convergent subsequence $\{x_k\}_{k \in {\cal K}}$.

		Now that it is proven that Algorithm~\ref{alg:LMM_armijo} generates a sequence of directions $\{d_k\}$ that is gradient-related, 
		using \cite[Proposition~1.2.1]{Bertsekas2016}, global convergence can be established.
		
		\begin{teo}\label{teo:prop_conv}
		 Given $x_0 \in \Rn$, 
		 %suppose that condition~\eqref{comp-condition}  holds in the level set ${\cal L}_0 = \{ x \in \Rn \mid \phi(x) \leq \phi(x_0)\}$.
			let $\{ x_k \}$ be a sequence generated by Algorithm~\ref{alg:LMM_armijo} 
			with either $\lambda_k = \|J_k^T F_k\|^r, r \in ]0,1[$ or $0 < \underline{\lambda} \leq \lambda_k \leq \bar{\lambda}$.  
			Then, every limit point $\hat{x}$ of $\{ x_k \}$ is such that $\nabla \phi (\hat{x}) = 0$.
		\end{teo}
		\begin{proof}
			Let $K_1 = \{ k \in \mathbb{N} \mid \|J(x_k + d_k)^TF(x_k + d_k) \| \leq \vartheta \| J(x_k)^TF(x_k) \| \}$.
			If $K_1$ is infinite, it follows that $\| J(x_k)^TF(x_k)\| \rightarrow 0$, and therefore any limit point $\hat{x}$ of $\{x_k\}$ is such that $J(\hat{x})^TF(\hat{x})=0$, hence $\nabla \phi(\hat {x})=0$.
			Otherwise, if $K_1$ is finite, let us assume, without loss of generality, that $\|J(x_k+d_k)^TF(x_k + d_k) \| > \vartheta \| J(x_k)^TF(x_k) \|$, for each $k$, such that the step size is chosen to satisfy the Armijo condition. 
%			Since, by Proposition~\ref{prop_dk} and Remark~\ref{obs:dLM}, 
			Since the directions of Algorithm~\ref{alg:LMM_armijo} are gradient-related (due to Proposition~\ref{prop_dk} and discussion afterwards), 
			it follows from \cite[Proposition~1.2.1]{Bertsekas2016} that any limit point $\hat{x}$ of $\{x_k\}$ is a stationary point of $\phi(x)$. \qed 
		\end{proof}
		
		The next result establishes the connection between global and local convergence by showing that, under certain conditions, 
		$\alpha_k=1$ for all $k$ sufficiently large and then Algorithm~\ref{alg:LMM_armijo} becomes the ``pure'' LMMSS and the results of Section~\ref{sec:local} are applicable. This is the only result in this section which requires Assumption~\ref{hipotese5paraconvl}. 
		
		\begin{teo}\label{teo:globallocal}
			Let $\{ x_k \}$ be generated by Algorithm~\ref{alg:LMM_armijo}, using $x_0 \in \Rn$ as the initial point and $\lambda_k = \|J_k^T F_k\|$.
			Suppose that Assumption~\ref{Hip_error_boundnew}, Assumption~\ref{hipotese5paraconvl} with $r=1$, 
			and condition~\eqref{comp-condition} hold at the level set ${\cal L}_0 = \{ x \in \mathbb{R}^n \mid \phi(x) \leq \phi(x_0)\}$. 
			Moreover, assume that ${\cal L}_0$ is compact. 
			
			Then, for every $k$ sufficiently large, $\alpha_k=1$, and the sequence $\{ \dist(x_k,X^*) \}$ converges quadratically to zero.
		\end{teo}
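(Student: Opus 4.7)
The plan is to combine the global convergence result (Theorem~\ref{teo:prop_conv}) with the local analysis of Section~\ref{sec:local}: I first identify a stationary limit point $x^*$, then show that once the iterates enter a sufficiently small neighborhood of $x^*$ the acceptance test in Step~3 of Algorithm~\ref{alg:LMM_armijo} is always satisfied, so that the algorithm reduces to pure LMMSS from that iteration onward and the quadratic rate comes from Theorem~\ref{teoconvlocalhip5CR} or Theorem~\ref{teoconvlocalhip5rankdim}.

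First I would note that Steps~3 and~5 of Algorithm~\ref{alg:LMM_armijo} both enforce $\phi(x_{k+1}) \leq \phi(x_k)$, so $\{x_k\} \subset {\cal L}_0$, which is compact. Hence $\{x_k\}$ admits at least one limit point, and by Theorem~\ref{teo:prop_conv} every such limit point lies in $X^*$. Fix one, call it $x^*$, and take $\delta > 0$ small enough so that Assumptions~\ref{Hip_nullJLnew}--\ref{Hip_error_boundnew} and Assumption~\ref{hipotese5paraconvl} all hold on $B(x^*, \delta)$. Depending on whether $\text{rank}(J(x))$ is constant in $B(x^*,\delta)$ or strictly larger than $\text{rank}(J_*)$ near $x^*$, Lemma~\ref{lemac1postoconst} or Lemma~\ref{lemac1postodiminuindo} applies; both require $\lambda_k = \|J_k^T F_k\|^r$, which with $r = 1$ matches the algorithm's choice $\lambda_k = \|\nabla\phi(x_k)\|$. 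In either scenario there exists $c_1 > 0$ with $\|d_k\| \leq c_1 \dist(x_k, X^*)$ whenever $x_k \in B(x^*, \delta/2)$.

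The crux of the argument is the acceptance test $\|\nabla\phi(x_k + d_k)\| \leq \vartheta \|\nabla\phi(x_k)\|$. Using Lemma~\ref{Lem: Auxiliarconvgeral} specialized to $r = 1$ (exactly as in the derivations inside Lemmas~\ref{Lem:Auxconv1hip5} and~\ref{Lem:Auxconvpostodimhip5}), whenever $x_k, x_k + d_k \in B(x^*, \delta/2)$ I obtain $\omega\, \dist(x_k + d_k, X^*) \leq C_\star\, \dist(x_k, X^*)^2$ for a suitable constant $C_\star > 0$. Combining this with the Lipschitz-type bound $\|\nabla\phi(y)\| \leq L_3 \dist(y, X^*)$ coming from~\eqref{eq:associationerrorwithL3} and the error bound $\omega\, \dist(x_k, X^*) \leq \|\nabla\phi(x_k)\|$ yields
\[
\frac{\|\nabla\phi(x_k + d_k)\|}{\|\nabla\phi(x_k)\|} \;\leq\; \frac{L_3 C_\star}{\omega^2}\,\dist(x_k, X^*),
\]
which is $\leq \vartheta$ as soon as $\dist(x_k, X^*)$ is sufficiently small. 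I then choose $\varepsilon^* > 0$ so that (i) the ratio above is bounded by $\vartheta$, (ii) $\|d_k\| \leq c_1 \varepsilon^*$ keeps $x_k + d_k$ inside $B(x^*, \delta/2)$, and (iii) $\varepsilon^*$ is no larger than the radii provided by Lemmas~\ref{Lem:Aux2conv1hip5} and~\ref{Lem:Auxconv2diminuindohip5}. Because $x^*$ is a limit point, there exists $k_0$ with $x_{k_0} \in B(x^*, \varepsilon^*)$, and at that iteration the test in Step~3 succeeds, so $\alpha_{k_0} = 1$ and Step~4 is skipped.

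An easy induction closes the argument: once $x_k \in B(x^*, \varepsilon^*)$ and $\alpha_k = 1$, the contraction $\dist(x_{k+1}, X^*) \leq \eta\, \dist(x_k, X^*)$ from Lemma~\ref{Lem:Auxconv1hip5} (or Lemma~\ref{Lem:Auxconvpostodimhip5}) keeps the new iterate inside $B(x^*, \varepsilon^*)$, so Step~3 fires again at $k+1$, and so on. Therefore $\alpha_k = 1$ for every $k \geq k_0$, the algorithm coincides with pure LMMSS from $k_0$ onward, and Theorem~\ref{teoconvlocalhip5CR} or Theorem~\ref{teoconvlocalhip5rankdim} -- both yielding a quadratic rate when $r = 1$ -- gives the stated convergence of $\{\dist(x_k, X^*)\}$. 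The main obstacle in carrying out this plan is the triple constraint on $\varepsilon^*$ (ratio bound, containment of $x_k + d_k$ in $B(x^*, \delta/2)$, and compatibility with the local attraction radius); the potential issue that Step~4 might replace the LMMSS direction by the safeguard direction is harmless here because Step~3 is evaluated first, so once the unit-step test passes there is nothing left to verify.
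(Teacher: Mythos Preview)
Your proposal is correct and follows essentially the paper's route---combining Theorem~\ref{teo:prop_conv} with the local estimates of Section~\ref{sec:local} to verify the Step~3 test via the same chain $\|\nabla\phi(x_k+d_k)\|\leq L_3\,\dist(x_k+d_k,X^*)\leq (L_3 C_\star/\omega)\,\dist(x_k,X^*)^2\leq\vartheta\,\|\nabla\phi(x_k)\|$, then invoking Theorem~\ref{teoconvlocalhip5CR} or~\ref{teoconvlocalhip5rankdim}---the only packaging difference being that the paper argues $\|\nabla\phi(x_k)\|\to 0$ along the \emph{whole} sequence (from compactness of $\mathcal L_0$ and the fact that every limit point is stationary) instead of fixing one limit point $x^*$ and running a trapping induction. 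One small imprecision to fix: your induction claim that the contraction keeps $x_{k+1}\in B(x^*,\varepsilon^*)$ is not what the distance contraction literally yields; what you actually obtain is $\dist(x_{k+1},X^*)\leq\varepsilon^*$ (which is exactly what your ratio bound needs) together with $x_{k+1}\in B(x^*,\delta/2)$ via your condition~(iii), i.e., Lemma~\ref{Lem:Aux2conv1hip5} or~\ref{Lem:Auxconv2diminuindohip5}.
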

		
		\begin{proof}
			Note that by the definition of Algorithm~\ref{alg:LMM_armijo} $\{ \phi(x_k) \}$ is decreasing. 
			Starting from $x_0$, we conclude that the sequence $\{x_k\}$ remains in ${\cal L}_0$, a compact set. 
			%Therefore, there exists a convergent subsequence, as per Theorem~\ref{teo:prop_conv}, where every limit point of this subsequence is stationary. By removing this convergent subsequence, we can repeat this process successively for the remaining sequence, which remains in a compact set, thereby concluding that $\{x_k\}$ is the disjoint and enumerable union of convergent subsequences, each of which has a stationary point as limit.
			By Theorem~\ref{teo:prop_conv}, every limit point $\hat{x}$ of $\{x_k\}$ is stationary, thus for $k$ sufficiently large, the norm of the gradient at $x_k$ will be sufficiently small. 
			In other words, there is a positive integer $k_0$ such that    
			\begin{equation}\label{theorem_2_auxB}
				\|J(x_{k})^TF(x_{k})\|\leq \frac{\omega^3 \vartheta}{\tilde{C} L_3},  \quad \text{for all} \quad k \geq k_0.
			\end{equation}
			
			For $k \geq k_0$, let $\bar{x}_{k+1} \in X^*$ such that $\|x_{k+1} -\bar{x}_{k+1} \| = \dist (x_{k+1},X^*)$. 
			Then, from \eqref{eqhipL3new}, Assumption~\ref{Hip_error_boundnew},\eqref{eq:hip5convlocal} and \eqref{theorem_2_auxB}, it follows that
			\begin{align*}
				\|J(x_{k+1})^TF (x_{k+1})\| &= \|J(x_{k+1})^TF (x_{k+1})- J(\bar{x}_{k+1})^TF(\bar{x}_{k+1})\|\\ \notag &\leq L_3 \|x_{k+1} -\bar{x}_{k+1} \|\\ \notag
				&= L_3 \dist (x_{k+1},X^*) \leq \frac{\tilde{C}}{\omega} L_3 \dist (x_{k},X^*)^2\\ 
				&\leq \frac{\tilde{C} L_3 \|J(x_k)^TF (x_k)\|}{\omega^3} \|J(x_k)^TF (x_k)\| \\ \notag
				&\leq \vartheta \|J(x_k)^TF (x_k)\|, %\label{theorem_2_aux2}
			\end{align*}
			proving that $\alpha_k = 1$ for each $k \geq k_0$. To obtain the quadratic convergence of  $\{ \dist (x_k, X^*) \}_{k \geq k_0}$ to 0, 
			one may now apply Theorem~\ref{teoconvlocalhip5CR} (or Theorem~\ref{teoconvlocalhip5rankdim}) 
			with $x^* = \hat{x}$ and $x_0 = x_{k_0}$. \qed 
		\end{proof}
		
		\begin{obs}
			We observe that, in the case of zero residual, a previous result \cite[Theorem 3.1]{Yamashita} assumed  that the limit point $x^*$ is such that $F(x^*) = 0$. Also, in the case of unconstrained optimization, in order to prove a result similar to Theorem~\ref{teo:globallocal} the required assumption is that the Hessian at $x^*$ is positive definite; however, such assumption would imply that $x^*$ is an isolated stationary point. 
			As we want to address the case of nonzero residual and possibly nonisolated stationary points, we considered the boundness of the level set instead of these other two conditions.
		\end{obs}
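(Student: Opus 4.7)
The final statement is a remark (an \emph{obs} environment) rather than a theorem; its role is to justify the compact level-set hypothesis adopted in Theorem~\ref{teo:globallocal} by contrasting it with two alternative conditions appearing elsewhere in the literature. Since there is no single proposition to discharge, my plan is to verify the only nontrivial mathematical claim embedded in it, namely that positive definiteness of $\nabla^2 \phi(x^*)$ forces $x^*$ to be isolated in $X^*$, and then to explain why the other two assertions require nothing beyond definition-checking.

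First I would address the isolation claim. Since $\phi \in C^2$ (because $F$ is twice continuously differentiable) and $\nabla^2 \phi(x^*)$ is positive definite, the Jacobian of the map $\nabla \phi : \Rn \to \Rn$ at $x^*$ is nonsingular. By the inverse function theorem, $\nabla \phi$ is a local $C^1$ diffeomorphism on some neighborhood $U$ of $x^*$, so $(\nabla \phi)^{-1}(\{0\}) \cap U = \{x^*\}$, which gives isolation in $X^*$. Next I would note that the Yamashita-type hypothesis $F(x^*)=0$ is incompatible with the standing non-zero-residue framework of the paper (where $F$ has no zeros), so it is trivially stronger than the condition we impose. Finally, to substantiate that these two conditions are genuinely more restrictive than boundedness of ${\cal L}_0$, I would appeal to concrete instances in Section~\ref{sec:examples} where $F(x^*)\neq 0$, $X^*$ is possibly positive-dimensional, yet $\{x \in \Rn \mid \phi(x) \leq \phi(x_0)\}$ is compact.

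The hardest aspect is not computational but rhetorical: one must persuade the reader that the compact level-set assumption is simultaneously weak enough to accommodate the non-zero-residue and non-isolated-stationary-set regimes we target, and strong enough to yield the convergent subsequence exploited by Theorem~\ref{teo:prop_conv} in the proof of Theorem~\ref{teo:globallocal}. Calibrating that balance, rather than invoking the inverse function theorem, is the real obstacle in writing a convincing remark here.
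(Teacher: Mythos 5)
This statement is an unproved commentary remark in the paper, and your reading of it is correct: the only mathematical content needing justification is that a positive definite $\nabla^2\phi(x^*)$ forces $x^*$ to be isolated in $X^*$, which your inverse-function-theorem argument establishes in the standard way the authors implicitly rely on. Your proposal is correct and consistent with the paper's (tacit) reasoning.
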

		
		\begin{obs}\label{obs:adjust}
			In the scenario of diminishing rank, if Assumption~\ref{hipotese5paraconvl} is satisfied with $0<r<1$, 
			it is necessary to adjust the parameter $\lambda_k$ of Algorithm~\ref{alg:LMM_armijo} to $\lambda_k=\norm{J_k^TF_k}^r$.
			 Furthermore, when this hypothesis holds, Theorem~\ref{teo:globallocal} follows 
			 with minor changes in some constants, 
			 and we can show there exists $k_0$ such that $\alpha_k=1, \forall k \geq k_0$ and that $\dist(x_k,X^*)$ goes to zero superlinearly.     
		\end{obs}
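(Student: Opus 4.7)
The plan is to re-run the proofs of Proposition~\ref{prop_dk}, Theorem~\ref{teo:prop_conv} and Theorem~\ref{teo:globallocal} with the adjusted choice $\lambda_k=\|J_k^TF_k\|^r$ for $0<r<1$, tracking the places where the exponent $r$ enters and checking that only constants change. First I would revisit the gradient-related argument of Proposition~\ref{prop_dk}. The lower bound $\nabla\phi(x_k)^Td_k\le -\gamma\min\{1,\lambda_k\}\|d_k\|^2$ uses only positivity of $\lambda_k$, so it is unchanged. For the subsequence $\{x_k\}_{k\in\mathcal{K}}\to x_\infty$ with $\nabla\phi(x_\infty)\ne 0$, the continuity of $J$ and $F$ gives $\|J_k^TF_k\|$ bounded away from zero along $\mathcal{K}$, so $\lambda_k=\|J_k^TF_k\|^r$ is also bounded below by some $\underline{\lambda}>0$, yielding the angle condition. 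For the bound on $\|d_k\|$, the only change occurs in case (a) of the $\max\{\|\Gamma_k\|,1\}\|J_k^TF_k\|$ estimate: when $\lambda_k<1$, we now obtain $\|\Gamma_k\|\|J_k^TF_k\|\le \|J_k^TF_k\|^{1-r}\le M_0^{1-r}$, which is still bounded. Hence $\{d_k\}_{k\in\mathcal{K}}$ remains bounded and $\{d_k\}$ is gradient-related.

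Once the gradient-related property is established, Theorem~\ref{teo:prop_conv} carries over verbatim, so every limit point of $\{x_k\}$ is stationary. Then, following the proof of Theorem~\ref{teo:globallocal}, compactness of ${\cal L}_0$ together with the decreasing property of $\{\phi(x_k)\}$ and the stationarity of limit points yields $\|J_k^TF_k\|\to 0$ along the whole sequence. My next step would be to combine this with the local estimate from Lemma~\ref{Lem:Auxconvpostodimhip5}, which under the current choice $\lambda_k=\|J_k^TF_k\|^r$ gives
\begin{equation*}
\dist(x_{k+1},X^*)\le \frac{\hat{C}}{\omega}\dist(x_k,X^*)^{1+r}
\end{equation*}
for all $k$ sufficiently large (when $x_k$ lies in the appropriate ball around a stationary limit point). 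Chaining this with \eqref{eqhipL3new} and the error bound \eqref{error_boundnew} yields
\begin{equation*}
\|J_{k+1}^TF_{k+1}\|\le L_3\dist(x_{k+1},X^*)\le \frac{L_3\hat{C}}{\omega^{2+r}}\,\|J_k^TF_k\|^{r}\cdot \|J_k^TF_k\|.
\end{equation*}

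Since $\|J_k^TF_k\|\to 0$, the factor $\|J_k^TF_k\|^{r}$ eventually drops below $\vartheta\omega^{2+r}/(L_3\hat{C})$, so there exists $k_0$ with $\|J_{k+1}^TF_{k+1}\|\le\vartheta\|J_k^TF_k\|$ for every $k\ge k_0$. This triggers Step~3 of Algorithm~\ref{alg:LMM_armijo} and forces $\alpha_k=1$ from $k_0$ onward. From $k_0$ the iteration coincides with the pure LMMSS analyzed in Section~\ref{sec:local}, so Theorem~\ref{teoconvlocalhip5rankdim} applies and yields superlinear convergence of $\{\dist(x_k,X^*)\}$ with rate $1+r$.

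The main obstacle I anticipate is a purely bookkeeping one: making sure that the thresholds dictating ``sufficiently large $k$'' can all be met simultaneously, namely that $x_{k_0}$ lies inside the ball of radius $\varepsilon$ from \eqref{eq:epsA5DIMRANK} around some $x^*\in X^*$ while also satisfying the $\vartheta$-decrease bound above. This is handled by first extracting a convergent subsequence whose limit $x^*$ is stationary (by Theorem~\ref{teo:prop_conv}), then using continuity, $\|J_k^TF_k\|\to 0$, and the error bound to trap the full tail of $\{x_k\}$ in $B(x^*,\varepsilon)$. After that, the constants $\hat{C}$, $c_1$ and $L_7$ are exactly those defined in Lemma~\ref{Lem:Auxconvpostodimhip5}, so only their numerical values differ from the $r=1$ case and the rate degrades from quadratic to $(1+r)$-superlinear, exactly as the remark claims.
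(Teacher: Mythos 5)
Your proposal is correct and follows essentially the same route the paper intends: you rerun Proposition~\ref{prop_dk} noting that only the constants $M_0^{1-r}$ and $M_1$ change, and then adapt the proof of Theorem~\ref{teo:globallocal} by replacing the estimate $\frac{\tilde{C}L_3}{\omega^{3}}\|J_k^TF_k\|^{2}$ with $\frac{\hat{C}L_3}{\omega^{2+r}}\|J_k^TF_k\|^{1+r}$ to trigger the full-step acceptance in Step~3, after which Theorem~\ref{teoconvlocalhip5rankdim} gives the order-$(1+r)$ superlinear rate. This matches the paper's argument with only the constant and exponent bookkeeping the remark alludes to.
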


		\section{Illustrative examples}\label{sec:examples}
		The purpose of this section is to illustrate our convergence results by presenting 
		some examples of nonzero residual nonlinear least-squares problems.  
		In this section we use the shorthand notation (A1) -- (A4) for Assumptions~\ref{Hip_Jdiffnew}--\ref{hipotese5paraconvl}, respectively. 
		The examples came from reference \cite[Section~5]{behling2019local}, where  
		(A1)--(A4) were already demonstrated/verified and such verification will not be repeated here.\\
		
		\textbf{Example 1.} (Constant rank and (A4)) Let us start with an example from \cite[Example 5.1]{behling2019local} with nonisolated minimizers (stationary-points). 
		Consider the residual function 
		\[
		F(x) = (x_1^2 + x_2^2 - 1, x_1^2 + x_2^2 - 9)^T.
		\]
		The set of minimizers (stationary-points) $X^*$ consists of points satisfying $x_1^2 + x_2^2 - 5 = 0$, 
		thus $\text{dist}(x, X^*) = |\|x\| - \sqrt{5}|$. Moreover, for any $x^* \in X^*$, $\phi(x^*) = \frac{1}{2}\|F(x^*)\|^2= \frac{1}{2}(4^2+(-4)^2)= 16$. 
		
		It is straightforward to observe that 
		\[
		J(x) = \left[  \begin{array}{cc} 2x_1  & 2 x_2 \\ 2x_1  & 2 x_2 \end{array} \right]
		\] 
		has rank $1$ everywhere except for the origin. 
		Moreover, as discussed in \cite{behling2019local}, (A1), (A2) and (A4) with $r=1$ hold as well. 
		It is worth to point out that (A4) holds with $C=0$ because in this example $\| J(x)^T F(z) \| = 0$.
		
		By considering LMMSS with the scaling matrix $L=[-1 \quad 1]$,  condition \eqref{comp-condition} holds everywhere except for the line $x_2 = - x_1$. 
		
		Hence, if we consider an initial point  in a neighborhood of $x^* = (0, \sqrt{5})^T \in X^*$, 
		we are under conditions to apply the results of Section~\ref{subsec:constrank}. 
		As expected from Theorem~\ref{teoconvlocalhip5CR}, 
		in Table~\ref{tab:ex1convlocal}, we can observe the local quadratic convergence of $\text{dist}(x_k, X^*)$ to zero, 
		starting LMMSS from two distinct starting points in a neighborhood of $(0, \sqrt{5})^T$. 
		
		\begin{table}
			\centering
			\caption{Stopping criterion $\|J_k^T F_k\| < 10^{-8}$ with $x_0 = (0, \sqrt{5} + 0.03)^T$ as starting point (left), and $x_0 = (0.01, \sqrt{5} - 0.01)^T$ (right).}
			\label{tab:ex1convlocal}
			\begin{tabular}{cccccc}
				\hline
				$k$ & dist($x_k, X^*$) & $\norm{J_k^T F_k}$ & $k$ & dist($x_k, X^*$) & $\norm{J_k^T F_k}$ \\
				\hline
				0 & $1.3506 \times 10^{-1}$ & $1.2242$ & 0 & $4.4521\times 10^{-2}$ & $3.9643 \times 10^{-1}$ \\
				1 & $1.7762 \times 10^{-2}$ & $1.5890 \times 10^{-1}$ & 1 & $1.9821 \times 10^{-4}$ & $1.7729 \times 10^{-3}$ \\
				2 & $3.2402 \times 10^{-7}$ & $2.8982 \times 10^{-6}$ & 2 & $3.8598 \times 10^{-9}$ & $3.4523 \times 10^{-8}$ \\
				3 & $1.1546 \times 10^{-14}$ & $1.0659 \times 10^{-13}$ & & & \\
				\hline
			\end{tabular}
		\end{table}
		
		% efeito da L
		This example is also useful to illustrate the effect of using $L^T L \ne I$ and the globalization mechanisms of Algorithm~\ref{alg:LMM_armijo}.  
		Figure~\ref{fig:ex1_2_4_a} depicts the trajectory of LMM ($L^T L = I$) and the one of LMMSS (with $L = [-1 \quad 1]$) both starting from $x_0 = (2,4)^T$. 
		As we can see from the figure, while LMM dives perpendicularly to the set $X^*$ (observe the level curves in the figure), 
		LMMSS trajectory seems parallel to the null space of $L$,  
		and they end up at different stationary points but with the same residual norm. 
		Figure~\ref{fig:ex1_2_4_b} plots the gradient norm through the iterations of LMM and LMMSS.

%		As exposed in \cite{Boos2024} for parameter identification problems, 
%		an advantage of using Algorithm~\ref{alg:LMM_armijo} is that in a given problem with nonisolated solutions, we can choose $L$ that enforces a specific property. 
%		In this geometric example, by using $L = [-1, 1]$, we aim for the method to prioritize stationary points where $\| L(x - x_0) \| = |x_2 - x_1 - 2|$ is as small as possible. 
%		In \cite{Boos2024}, the authors employed $L$ as discrete versions of derivative operators in order to promote smoothness 
%		in the obtained solutions for parameter identification problems in heat conduction. 
		
%		Moreover, Figure~\ref{fig:ex1_2_4_b} enforces this aspect by showing that the presence of a well-thought choice of $L$ may conduce the iterations faster and in a more accurate manner than LMM, while it stimulates a desired property.
		
		\begin{figure}[!ht]
			\centering
			\subfloat[Iterations and level curves of the objective function. \label{fig:ex1_2_4_a}]{\includegraphics[scale=0.5]{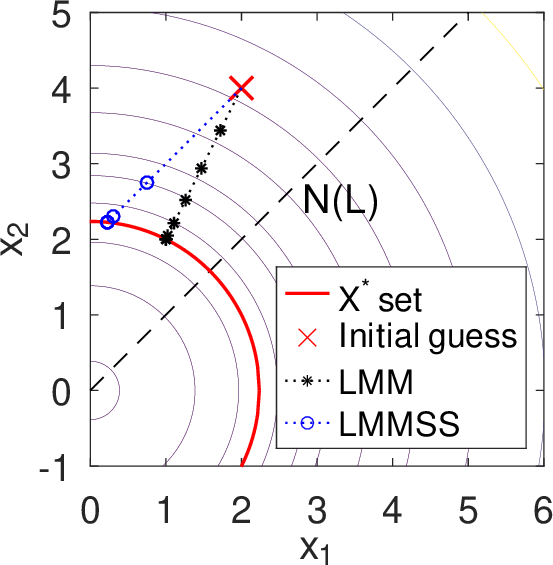}} \qquad \qquad 
			\subfloat[Values of $\|J_k^T F_k\|$ until the stopping criterion is achieved. \label{fig:ex1_2_4_b}]{\includegraphics[scale=0.5]{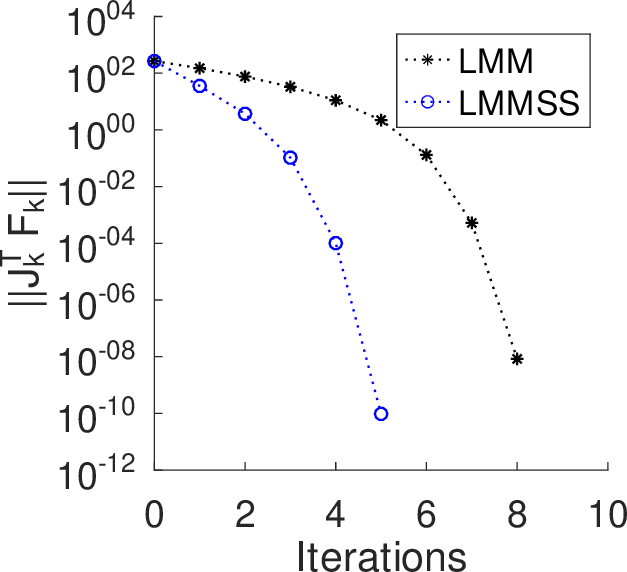}}
			\caption{Plot of iterations for LMM and LMMSS for Example 1, with starting point $(2,4)$.}
			\label{fig:ex1_2_4}
		\end{figure}
		
		The last aspect we shall discuss with this example is about the completeness condition \eqref{comp-condition} 
		and the global convergence mechanisms of Algorithm~\ref{alg:LMM_armijo}. 
		As mentioned before, condition ~\eqref{comp-condition} holds everywhere except for the line $x_2 = -x_1$. 
		LMMSS also tries to move parallel to the null space of $L$. Figure~\ref{fig:ex1_-1_3_a} illustrates what happens if we apply LMMSS without the safeguard Step~4, 
		using $x_0 = (-1, 3)^T$. The displacements $x_k - x_0$ are in ${\cal N}(L)$, but the hyperplane $x_0 + {\cal N}(L)$ does not intersect $X^*$. 
		As a result, LMMSS minimizes $\phi(x)$ in $x_0 + {\cal N}(L)$, approximating a point $\hat{x}$ where $\nabla \phi(\hat{x}) \perp {\cal N}(L)$ 
		so that the directions $d_k$ become orthogonal to $-\nabla \phi(x_k)$ loosing the descent property and the algorithm crashes not satisfying the Armijo condition. 
		Notice that $\hat{x}$ is such that $\hat{x}_2 = -\hat{x}_1$ and condition~\eqref{comp-condition} fails. 
		On the other hand, if we use Algorithm~\ref{alg:LMM_armijo} with the safeguard step, Step~4 corrects this behaviour and,
		 when $x_k$ approaches $\hat{x}$,  
		the classic LMM direction is employed instead; 
		see Figure~\ref{fig:ex1_-1_3_b}. This allows the algorithm to move on and reach $X^*$. 
		
		\begin{figure}[!ht]
			\centering
			\subfloat[LMMSS without safeguard Step 4 in Algorithm~\ref{alg:LMM_armijo}. \label{fig:ex1_-1_3_a}]{\includegraphics*[scale=0.49]{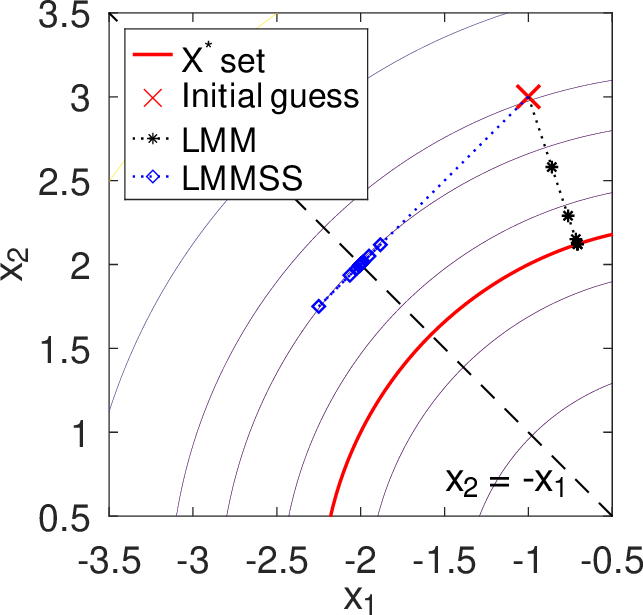}} \qquad \qquad 
			\subfloat[LMMSS with safeguard Step 4 in Algorithm~\ref{alg:LMM_armijo}. \label{fig:ex1_-1_3_b}]{\includegraphics*[scale=0.49]{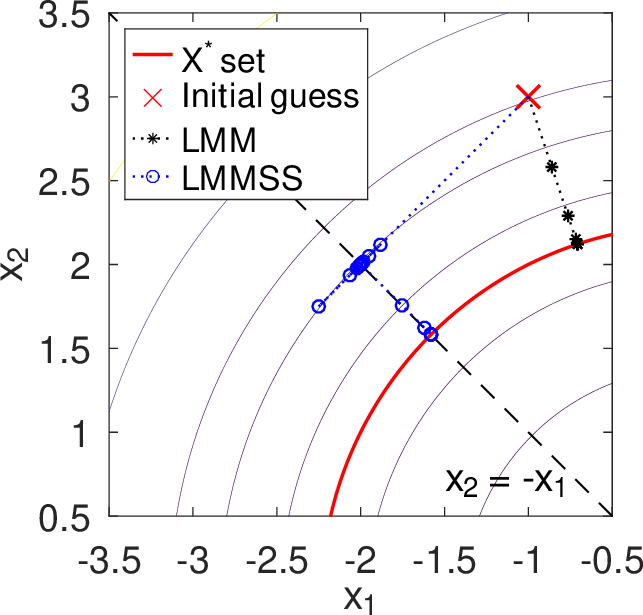}}
			\caption{Plot of iterations for LMM and LMMSS for Example 1, with starting point $(-1,3)$, and level curves of the objective function.}
			\label{fig:ex1_-1_3}
		\end{figure}

		\textbf{Example 2.} (Diminishing rank and (A4)) Now we consider an example to illustrate the ``diminishing rank'' scenario. 
		The residual function is given by 
		\[
		F(x) = (x_1^3 -x_1x_2 + 1, x_1^3 + x_1x_2 + 1),
		\] 
		which leads to $\phi(x)$ having an isolated global minimizer at $(-1,0)^T$ and a nonisolated set of local minimizers $\{x \in \R^2 \mid x_1 = 0\}$ 
		that we shall use as $X^*$ and, in this case, $\dist(x,X^*) = |x_1|$. 
		
		This example was also analyzed in \cite[Example~5.2]{behling2019local} 
		where it was verified that assumptions (A1), (A2) and (A4) with $r=1$ hold in a neighborhood of $x^* = (0,2)^T$. 
		Differently from the previous example where $J(x)^T F(z) = 0$, here (A4) holds because $\| J(x)^T F(z) \| = 6 x_1^2 = 6 \dist(x,X^*)^2$.
		
		For condition~\eqref{comp-condition}, we point out that the null space of
		\[
		J(x) = \left[  \begin{array}{cc} 3x_1^2 - x_2 & \quad -x_1 \\3x_1^2 + x_2  & \quad x_1 \end{array} \right]
		\]
		is $\{0\}$ wherever $x_1 \ne 0$ and it is $\text{span} \{(0,1)^T\}$ for $x \in X^* \setminus \{0\}$. 
		Hence, it is clear that, regardless the choice of $L$, condition~\eqref{comp-condition} holds everywhere except for $X^*$.
		
		In Table~\ref{tab:ex2} and Figure~\ref{fig:ex2} we report the behaviour of LMM and LMMSS, with $L = [-1 \quad 1 ]$,  
		starting in the neighbourhood of $x^* = (0,2)^T$. 
		From Table~\ref{tab:ex2}, the quadratic convergence rate is apparent for both methods. 
		In Figure~\ref{fig:ex2} we highlight the different limit points: 
		while the classic LMM moves almost perpendicularly to $X^*$, 
		LMMSS moves parallel to ${\cal N}(L)$, leading in this case to a stationary point with smaller $x_2$ coordinate. 
		
		\begin{figure}[!ht]
			\centering
			\includegraphics*[scale=0.5]{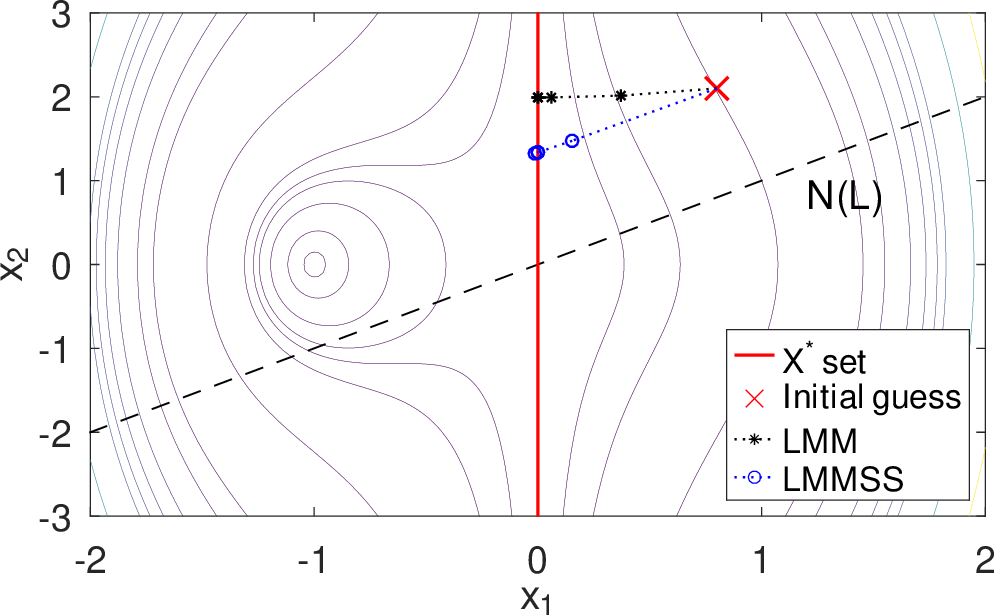}
			\caption{Plot of iterations for LMM and LMMSS for Example 2, with starting point $(0.8,2.1)^T$, and level curves of the objective function. Limit points: $(1.4833\times 10^{-14}, 1.9915)^T$ for LMM and $(-7.5482\times 10^{-16}, 1.3377)^T$ for LMMSS.}
			\label{fig:ex2}
		\end{figure}
		
		\begin{table}
			\centering
			\caption{Values of dist($x_k, X^*$) until the stopping criterion $\|J_k^T F_k\| < 10^{-10}$ is achieved for Example 2 with $x_0 = (0.8, 2.1)^T$.}
			\label{tab:ex2}
			\begin{tabular}{ccc}
				\hline
				$k$ & LMM & LMMSS  \\
				\hline
				1 & $3.7143 \times 10^{-1}$ & $1.5307 \times 10^{-1}$  \\
				2 & $6.0270 \times 10^{-2}$ & $1.3438 \times 10^{-2}$ \\
				3 & $1.0055 \times 10^{-3}$ & $1.7991 \times 10^{-4}$ \\
				4 & $2.4684 \times 10^{-7}$ & $3.0097 \times 10^{-8}$ \\
				5 & $1.4833 \times 10^{-14}$ & $7.5482 \times 10^{-16}$ \\
				\hline
			\end{tabular}
		\end{table}
 	\textbf{Example 3.} (Diminishing rank and (A3))	 The residual function
 	\[
 	F(x) = (\frac{1}{9} \cos x_1 - x_2 \sin x_1, \ \ \frac{1}{9} \sin x_1 + x_2 \cos x_1 )^T
 	\]
 	has a set of nonisolated minimizers $X^* = \{ x \in \R^2 \mid x_2 = 0 \}$, thus $\dist(x,X^*) = |x_2|$. 
 	In \cite[Example~5.3]{behling2019local} it was verified that for $x^* = (\pi, 0)^T$, and $\delta = 0.1$, 
 	(A1) is satisfied, as well as (A2) with $\omega = 1$ (because $\| \nabla \phi(x) \| = |x_2|$).

 	The Jacobian, given by
 	\[
 	J(x) = \left[
 	\begin{array}{cc}
 	-x_2  \cos x_1 - \frac{1}{9}\sin x_1 &    \quad -\sin x_1 \\
 	\ \\
 	-x_2 \sin x_1  + \frac{1}{9} \cos x_1  &  \quad \cos x_1
 	\end{array}
 	\right]
 	\]
 	has full rank for $x_2 \ne 0$ and rank 1 for $x_2 = 0$. Thus, the rank of $J(x_k)$ decreases as $x_k$ approaches $X^*$ from the outside. 
 	Moreover, condition~\eqref{comp-condition} is valid in $B(x^*,\delta) \setminus X^*$. 
 	
 	 	Unfortunately, (A4) does not hold in this example. But (A3) does, because
\begin{align*}
 	\| J(x)^T F(z) \| & = \frac{1}{18} \sqrt{81 \sin^2 (x_1 - z_1) + (9 x_2 \cos(x_1-z_1) + \sin(x_1 - z_1))^2} \\
 	& \leq  \frac{\sqrt{91}}{81} \| x - z \|,
\end{align*}
 	for $x \in B(x^*, \delta)$ and $z \in X^* \cap B(x^*,\delta)$. Hence, $\sigma = \sqrt{91}/81  \approx 0.118$. 
 	
 	Other constants relevant for the analysis are given approximately by 
 	$L_0 = 1.1156$, $L_4 = 0.5612$, $\gamma = 0.8949$, $(\sigma_{\min}^*)^2 = 1.0123$. 
 	We observe that $\sigma < (\sigma_{\min}^*)^2$, and for $\theta = 1.0315$, we ensure $ (\sigma_{\min}^*)^2 > \theta \sigma$. 
 	In this case, $\bar{\delta} = 0.1 = \delta$, and assumptions required by Lemma~\ref{lemac1postodiminuindo} hold. 

 	By considering $L = [1 \quad 0]$, $\| L \| = 1$ and from \eqref{c1compostodiminuindoa3} with $\kappa = 1.0001$ we obtain $c_1 = 2.6172$. 
 	Then, since $\bar{\sigma} = 0.1367 > \sigma$, we can follow Lemma~\ref{lem:Auxconv1a3} with $\varepsilon <= 0.0016$ 
 	and $\eta \in ] 0.8617, 1 [$. 
 	
 	Setting $\eta = 0.9$, makes $\varepsilon = 0.0016$ in \eqref{eq:epsA3DIMRANK} and 
 	starting from $x_0 = (\pi, 0.001)^T$, and using  $\lambda_k = \sigma + L_4 \|x_k - \bar{x}_k \|$, 
 	Theorem~\ref{teoconvlocalA3rankdim} applies, and we expect linear convergence, as can be verified in Table~\ref{tab:ex3}. 
 	
 	\begin{table}
 	\centering
 	\begin{tabular}{lr}
 	\hline 
 	$k$ & $\dist(x_k,X^*)$ \\ \hline 
 	 0 & 0.001 \\ 
 1 & 0.00010413  \\
 2 & 1.0889e-05 \\
 3 &  1.1392e-06 \\
 4 &  1.1919e-07 \\
 5 &  1.247e-08 \\
 6 &  1.3046e-09 \\
 7 &  1.365e-10\\
 8 &  1.4281e-11 \\ \hline 
 	\end{tabular}
 	\caption{Iterations of LMMSS for Example~3, initial point $x_0 = (\pi, 0.001)^T$.}\label{tab:ex3}
 	\end{table}

		\section{Final remarks}\label{sec:conclusion}
		
		We have investigated the local convergence of Levenberg-Marquardt method with singular scaling (LMMSS) proposed in \cite{Boos2024} 
		when applied to nonlinear least-squares problems with \emph{nonzero} residual. 

		Our study reveals that, 
		regardless the Jacobian rank is constant or not in a neighborhood of $x^* \in X^*$,  
		the convergence rate of $\dist(x_k,X^*)$ to zero depends on a combined measure of nonlinearity and residual size $\| J(x)^T F(z) \|$ (see Assumption~\ref{a3} and \ref{hipotese5paraconvl}) and suitable choices of the LM parameter $\lambda_k$ (Theorems~ \ref{teoconvlocahip4}--\ref{teoconvlocalA3rankdim}). 
		For $\| J(x)^T F(z) \| = o(\| x - z\|)$, the local convergence rate is superlinear whereas $\| J(x)^T F(z) \| \leq \sigma \| x - z\|$ leads to linear convergence, 
		but only when $\sigma$ is small enough relatively to the error bound constant $\omega$. 
		These results are aligned with those of \cite{behling2019local} but consider the more general case where $L^T L$ can be singular, 
		as long as the uniform completeness condition (Definition~\ref{def:ucomp}) holds in a suitable neighborhood of $x^* \in X^*$. 
		
		Furthermore, through  a few modifications in the algorithm proposed in  \cite[Algorithm~1]{Boos2024}, 
		namely, the choice of LM parameter, 
		the full step acceptance criterion 
		and the use of classic LMM direction as a safeguard whenever LMMSS fails to satisfy descent properties; 
		we have established under mild assumptions the global convergence of Algorithm~\ref{alg:LMM_armijo}: 
		every limit point of the generated sequence is stationary for the nonzero residual nonlinear least squares problem. 
		
		In future works we will look for weaker assumptions to control $\| J(x)^T F(z) \|$. 
		The condition $\| J(x)^T F(z) \| \leq \sigma \| x - z\|$ with small enough $\sigma$ 
		may be too restrictive and hold only in nonzero residual problems with very small residual norm. 
		When it does not hold, we cannot even ensure that  the ``pure'' LMMSS converges locally. 
		In that case, other methods such as Quasi-Newton or even Newton, when the Hessians of $F_i(x)$ are available, 
		might be more appropriate.

	\section*{Acknowledgments}
	This work was supported by Brazilian agencies FAPESC (Fundação de Amparo à Pesquisa e Inovação do Estado de Santa Catarina) [grant number 2023TR000360] and CNPq (Conselho Nacional de Desenvolvimento Científico e Tecnológico) [grant number 305213/2021-0].
	
	%%\section*{Appendix} 
	%%
	%%In the following, we show that in certain situations  
	%%the LM parameter $\lambda_k > 0$ plays no role in determining the direction $d_k$. 
	%%
	%%\begin{prop}
	%%If $p < n \leq m$, $\text{rank}(J_k) = n - p$, $J_k^T F_k \ne 0$ and Assumption~\ref{Hip_nullJLnew} holds, then any $d_k$ which minimizes 
	%%\[
	%%\| J_k d + F_k \|^2 + \lambda_k \| L d \|^2
	%%\]
	%%is such that $d_k \in {\cal N}(L)$, regardless the choice of $\lambda_k > 0$.
	%%\end{prop}
	%%
	%%
	%%\begin{proof}
	%%Any minimizer of $\| J_k d + F_k \|^2$ is such that 
	%%\[
	%%J_k^T J_k d = - J_k^T F_k, 
	%%\]
	%%which defines an affine subspace 
	%%\[
	%%S_k = \{ d \in \Rn \mid J_k^T J_k d = - J_k^T F_k \}
	%%\]
	%%parallel to ${\cal N}(J_k^T J_k) = {\cal N}(J_k)$. 
	%%On the other hand, due to Assumption~\ref{Hip_nullJLnew}, ${\cal N}(J_k) \cap {\cal N}(L) = \{0\}$. 
	%%Since $\text{dim}{\cal N}(J_k) = p$ and $\text{dim} {\cal N}(L) = n - p$ we have $\Rn = {\cal N}(J_k) \oplus {\cal N}(L)$ 
	%%wich implies that  $S_k \cap {\cal N}(L) \ne \emptyset$. 
	%%Therefore, any $d \in S_k \cap {\cal N}(L)$ minimizes both terms $\| J_k d + F_k \|^2$ and $\lambda_k \| L d \|^2$ independently of $\lambda_k > 0$.
	%%\end{proof}
	%%
	%%\textbf{Remark.} The condition $S_k \cap {\cal N}(L) \ne \emptyset$ does not hold when $J_k$ is full rank in which case $S_k$ is a singleton -- and nonzero vector because $J_k^T F_k \ne 0$. When $\text{rank}(J_k) > n - p$, that is, $\text{dim}{\cal N}(J_k) < p$ the claim of the above proposition holds if, and only if
	%%\[
	%%J_k^{+} F_k \in {\cal N}(J_k) \oplus {\cal N}(L) \subsetneq \Rn.
	%%\]

	\bibliographystyle{plain}
	\bibliography{refs}
\end{document}